      \def\sL{{\mathfrak L}}
      \def\dC{{\mathbb C}}
      \def\dR{{\mathbb R}}
\def\cD{{\mathcal D}}      
      \def\cL{{\mathcal L}}
      \def\cU{{\mathcal U}}
\def\bm\chi{\mbox{\boldmath$\chi$}}
\def\diag{{\rm diag\,}}
\let\xker=\ker \def\ker{{\xker\,}}
\def\Im{\operatorname{Im}}
\newtheorem{theorem}{Theorem}[section]
\newtheorem{proposition}[theorem]{Proposition}
\newtheorem{corollary}[theorem]{Corollary}
\theoremstyle{remark}
\newtheorem{example}[theorem]{Example}
\newtheorem{remark}[theorem]{Remark}
\numberwithin{equation}{section}
\newcommand{\tc}{\textcolor{black}}
\begin{document}

\title{Complex Jacobi matrices generated by Darboux transformations}

%\date{\today}

\author[R.~Bailey]{Rachel~Bailey}
\address{
RB,
Department of Mathematics\\
University of Connecticut\\
341 Mansfield Road, U-1009\\
Storrs, CT 06269-1009, USA}
\email{rachel.bailey@uconn.edu}

\author[M.~Derevyagin]{Maxim~Derevyagin}
\address{
MD,
Department of Mathematics\\
University of Connecticut\\
341 Mansfield Road, U-1009\\
Storrs, CT 06269-1009, USA}
\email{maksym.derevyagin@uconn.edu}

 \subjclass{Primary 42C05, 47B36; Secondary 47B28, 15A23.}
\keywords{$LU$- and $UL$-factorizations; orthogonal polynomials; complex Jacobi matrix; zeroes of orthogonal polynomials; Nevai class; $R_I$- and $R_{II}$-recurrence relations.}

\dedicatory{\tc{Dedicated to the 70th anniversary of Paco Marcell\'an}}

\begin{abstract} 
In this paper, we study complex Jacobi matrices obtained by the Christoffel and Geronimus transformations at a nonreal complex number, \tc{including the properties of the corresponding sequences of orthogonal
polynomials}. \tc{We also present some invariant and semi-invariant properties of Jacobi matrices under such transformations. For instance,} we show that a Nevai class is invariant under \tc{the transformations in question, which is not true in general, and that the ratio asymptotic still holds outside the spectrum of the corresponding symmetric complex Jacobi matrix but the spectrum could include one extra point}. In principal, these transformations can be iterated and, for example, we demonstrate how Geronimus transformations can lead to $R_{II}$-recurrence relations, which in turn are related to \tc{orthogonal rational functions} and pencils of Jacobi matrices.
\end{abstract}
\maketitle
%%%%%%%%%%%%%%%%%%%%%%%%%%%%%%%%%%%%%%%%%%%%%%%%%%%%%%%%%%%%%%%%%%%%%%%%%%%%%%%%%%

\section{Introduction}

We denote by $\dC$ the set of all complex numbers and by $\dC_{\pm}=\{z\in\dC|\pm\Im z>0\}$ the upper and lower half-planes, respectively. Let $\mathcal{L}$ be a complex-valued linear functional defined on the vector space $\mathbb{C}[z]$ of all polynomials with complex coefficients. Evidently, such 
a functional is uniquely determined by its moments
\[
s_j=\mathcal{L}[z^j]\in\dC, \quad j=0, 1, 2, \dots.
\]
If the moments are such that 
\[
\det(s_{i+j})_{i,j=0}^n\ne 0, \quad n=0, 1, 2, \dots
\]
then $\mathcal{L}$ is called {\it quasi-definite} or {\it regular}. It is known that $\mathcal{L}$ is quasi-definite if and only if there exists a sequence of polynomials $P_n(z)$ of degree $n$ that are orthogonal with respect to $\cL$. The latter means that they satisfy the following relations
\[
\cL[P_n(z)P_m(z)]=0, \quad n\ne m,
\]
and
\begin{equation*}
\cL[P_n^2(z)]\ne 0, \quad n=0, 1, 2,\dots.    
\end{equation*}
This {\it Orthogonal Polynomial System} (in what follows it will be abbreviated to OPS) is unique provided that the sequence of leading coefficients of $P_n(z)$'s is fixed. To be definite, we assume that $P_n(z)$ is a monic polynomial for each $n$ and in this case the OPS verifies a three-term recurrence relation of the form
\begin{align}
\label{def:rec}P_n(z) = (z-c_n)P_{n-1}(z)-\lambda_nP_{n-2}(z), \quad n=2, 3, \dots,
\end{align} or, equivalently,
\begin{align}
\label{def:rec2}
zP_n(z)=P_{n+1}(z)+c_{n+1}P_n(z)+\lambda_{n+1}P_{n-1}(z), \quad n=1, 2, \dots,
\end{align}
where $P_0(z)=1$ and $P_{1}(z)=z-c_1$. Moreover, the complex numbers $c_n$ and $\lambda_n$ can be computed in terms of $\mathcal{L}$ as follows:
\begin{equation}\label{lAndcFormulas}
\lambda_{n+1} = \frac{\mathcal{L}[P^2_n(z)]}{\mathcal{L}[P^2_{n-1}(z)]},\quad c_{n+1}= \frac{\mathcal{L}[zP^2_{n}(z)]}{\mathcal{L}[P^2_{n}(z)]},
\quad n=0,1,2,\dots,    
\end{equation}
where we assume that $\mathcal{L}\{P_{-1}^2(z)\}=1$ for consistency. Note that $\lambda_n\ne0$ for $n=1,2,3,\dots$ since $\mathcal{L}$ is quasi-definite (for more details about the basic theory of quasi-definite linear functionals see \cite[Chapter I]{Chihara}).

\tc{It is without any doubt that one of the most famous OPSs is the Chebyshev polynomials. Recall that the monic Chebyshev polynomials $\{{V}_n(z)\}_{n=0}^{\infty}$ of the third kind form an OPS with respect to the linear functional (e.g. see \cite{MH03})
\[
\mathcal{L}[p(x)] = \int_{-1}^1 p(x) \sqrt{\frac{1+x}{1-x}}\,{dx}
\]     
and they satisfy the three-term recurrence relation
\[
z{V}_{n}(z) ={V}_{n+1}(z)+\frac{1}{4}{V}_{n-1}(z), \quad n=1,2,\dots
\]
with the initial conditions
\[
{V}_0(z) =1, \quad {V}_1(z)=z-\dfrac{1}{2}.
\]
Clearly, the linear functional for the Chebyshev polynomials is defined by the measure $ \sqrt{\dfrac{1+x}{1-x}}\,{dx}$ on $(-1,1)$ and in this case we say that the polynomials are orthogonal with respect to the measure. Moreover, if $c_n\in\dR$ and $\lambda_n>0$, then, according to the Favard theorem, the underlying functional is defined by a positive measure $d\mu$, 
in which case for $\kappa\in\dR$ the transformation
\[
d\mu(x)\to d\mu^*(x)=(x-\kappa)d\mu(x)
\]
defines a new OPS $\{P_n^*(\kappa,z)\}_{n=0}^\infty$ as long as $x-\kappa$ does not change the sign on the convex hull of the support of the measure $d\mu$. This transformation is called the Christoffel transformation at $\kappa$. It has an inverse transformation, which is given by the formula
\[
d\mu(x)\to d\mu^{-*}(x)=\frac{1}{x-\kappa}d\mu(x)+M\delta_{\kappa},
\]
where $M$ is a real number and $\delta_{\kappa}$ is the delta function supported at $\kappa$, and which is called the Geronimus transformation at $\kappa$.
These two transformations give rise to a family of discrete dynamical systems defined by iterations of the forms
\begin{equation}\label{def:ddt}
\begin{split}
(\{c_n\}_{n=2}^\infty,\{\lambda_n\}_{n=2}^\infty)&\to (\{c_n^*(\kappa)\}_{n=2}^\infty,\{\lambda_n^*(\kappa)\}_{n=2}^\infty), \\
(\{c_n\}_{n=2}^\infty,\{\lambda_n\}_{n=2}^\infty)&\to (\{c_n^{-*}(\kappa)\}_{n=2}^\infty,\{\lambda_n^{-*}(\kappa)\}_{n=2}^\infty),
\end{split}
\end{equation}
where $c_n^*$, $\lambda_n^*$ and $c_n^{-*}$, $\lambda_n^{-*}$ correspond to $d\mu^*$ and $d\mu^{-*}$, respectively (note that the existence of the resulting sequences $c_n^*$, $\lambda_n^*$ and $c_n^{-*}$, $\lambda_n^{-*}$ is not automatically guaranteed). For instance, given a sequence of points $\kappa_1$, $\kappa_2$, $\kappa_3$, \dots, one can define the $k$-th evolution to be
\begin{equation}\label{def:ddt_evolution}
 c_n\to (c_n^*(\kappa_{2k-1}))^{-*}(\kappa_{2k}), \quad   \lambda_n\to (\lambda_n^*(\kappa_{2k-1}))^{-*}(\kappa_{2k}),
\end{equation}
provided each transformation is correctly defined.
The Chebyshev polynomials play a very special role for such discrete dynamical systems. Namely, the composition of a Christoffel transformation and a Geronimus transformation
\[
d\mu(x)=\sqrt{\dfrac{1+x}{1-x}}\,{dx}\to d\widetilde{\mu}(x)=\dfrac{1-x}{1+x}\sqrt{\dfrac{1+x}{1-x}}\,{dx}=\sqrt{\dfrac{1-x}{1+x}}\,{dx}
\]
maps the monic Chebyshev polynomials $\{{V}_n(z)\}_{n=0}^{\infty}$ of the third kind into the monic Chebyshev polynomials $\{{W}_n(z)\}_{n=0}^{\infty}$ of the fourth kind. The latter satisfies the three-term recurrence relation
\[
z{W}_{n}(z) ={W}_{n+1}(z)+\frac{1}{4}{W}_{n-1}(z), \quad n=1,2,\dots
\]
with the initial conditions
\[
{W}_0(z) =1, \quad {W}_1(z)=z+\dfrac{1}{2}.
\]
This demonstrates that the pair of sequences
\begin{equation}\label{RefrenceSeq}
c_2=0, c_3=0,\dots, \quad \lambda_2=\dfrac{1}{4}, \lambda_3=\dfrac{1}{4}, \dots    
\end{equation}
is a fixed point of the transformation composed of the two given in \eqref{def:ddt} for different points and $M=0$, and so it is an equilibrium solution to the corresponding discrete dynamical system in which the state of the system evolves according to \eqref{def:ddt_evolution}. 
Up to an alteration in the initial data, one can see that the sequence \eqref{RefrenceSeq} is also a fixed point for the two-iterated Christoffel transformation
\[
\dfrac{dx}{\sqrt{1-x^2}}\to (1-x)(1+x)\dfrac{dx}{\sqrt{1-x^2}}={\sqrt{1-x^2}}\,dx,
\]
which maps the Chebyshev polynomials of the first kind onto the Chebyshev polynomials of the second kind.
In this light, the statement of \cite[Theorem 3.1]{Simon04} basically reads that specific iterations of the double Christoffel transformations of an appropriate measure $d\mu$ converge to $\dfrac{dx}{\sqrt{1-x^2}}$ weakly, which generates the Chebyshev polynomials of the first kind and is also related to the sequence \eqref{RefrenceSeq}. Thus \cite[Theorem 3.1]{Simon04} can be interpreted as a stability result for this equilibrium solution. This observation gives a warrant for a further investigation of the general discrete dynamical systems in question and the stability of their equilibrium and periodic solutions. In this paper, we begin this study by exploring the analytic nature of Chrtistoffel and Geronimus transformations, the building blocks of the dynamical systems, in the case when we lose positivity and so the situation does not fall under the classical settings. When there is no need to distinguish between the two, these transformations are referred to as Darboux transformations \cite{SZh95}, \cite{BM04} and sometimes they are also referred to as commutation methods \cite{D78}, \cite{GT96}. 
To be more specific about our goal, let us remind that one can associate \eqref{def:rec2} to
the following monic Jacobi matrix 
\[
J_m=\begin{pmatrix}
c_1 & 1 & 0 & \cdots   \\
     \lambda_2 & c_2 & 1 &   \\
     0 & \lambda_3 & c_3 & 1 \ddots  \\
     \vdots & & \ddots & \ddots
\end{pmatrix},
\]
where the subscript $m$ stands for monic. So, what we do in this paper is we study how Darboux transformations affect the analytic properties of a real Jacobi matrix $J_m$ corresponding to a positive measure and its symmetrization, which includes the analytic properties of the corresponding orthogonal polynomials, when $\kappa$ is a nonreal number.
In particular, we establish some invariant and semi-invariant properties of Jacobi matrices under such transformations. Unlike the algebraic properties of Darboux transformations, which have been extensively studied (see \cite{BM04}, \cite{Y02}, \cite{Z97}, and the references therein), the effects of Christoffel and Geronimus transformations at $\kappa\in\dC\setminus\dR$ on analytic properties are not addressed in the existing literature. Besides, the Darboux transformations in questions do not preserve the realness of the Jacobi matrix and so we are basically studying certain families of complex Jacobi matrices, which, in the sense of dynamical systems, are elements of orbits of real Jacobi matrices. Note that in recent years there has been a growth of interest in complex Jacobi matrices (for example, see \cite{BC04}, \cite{BGK09}, \cite{SF17}, and \cite{Sw20}) and their applications in computational mathematics \cite{B01} (also see the references therein) and in non-classical quantum mechanics \cite{GGKN}, \cite{Z}.}

The paper is organized as follows. Section 2 gives a brief refresher of the \tc{case when the linear functional is positive-definite} and presents some auxiliary statements. Next, in Sections 3 and 4 we thoroughly \tc{analyze} the  Christoffel and Geronimus transformations at $\kappa\in\dC_\pm$ 
\tc{using theory of orthogonal polynomials}. \tc{Since the resulting measure is no longer positive-definite, it is not obvious if we can iterate such transformations. Therefore, we will then establish conditions under which we can perform two successive iterations of both transformations.} After that, Section 5 discusses the spectral properties of the Darboux transformations of real Jacobi matrices at $\kappa\in\dC_\pm$. Finally, \tc{in Section 6 we show how Darboux transformations give rise to orthogonal rational functions and the underlying three-term recurrence relations that correspond to $R_I$- and $R_{II}$-continued fractions, which were introduced in \cite{IsmailMasson95}.}

\section{Preliminaries: the positive-definite case}

Recall that the functional $\mathcal{L}$ is called positive-definite if $\cL[p(x)]>0$ for every polynomial $p(x)$ that is not identically zero and is non-negative for all real $x$. It is not so hard to see that given a non-negative function $w(x)$ on the interval $(a,b)$, the functional 
\[
\cL[p(x)]=\int_a^bp(x)w(x)\,dx
\]
gives an example of a positive-definite linear functional provided that $w(x)$ is integrable on $(a,b)$ and $w(x)>0$ on a subset of $(a,b)$ of positive Lebesgue measure. Also, in the same way, any probability measure that is  compactly supported on $\dR$ defines a positive-definite functional (to find out more details about the positive-definite case one can consult either \cite{Chihara} or \cite{Ismail09}). 

If $\mathcal{L}$ is positive-definite, all the moments $s_j=\mathcal{L}(z^j)$ are real and therefore, the coefficients $c_n$ and $\lambda_n$ of the the three-term recurrence relation 
\[
zP_n(z)=P_{n+1}(z)+c_{n+1}P_n(z)+\lambda_{n+1}P_{n-1}(z), \quad n=1, 2, \dots
\]
are also real according to  \eqref{lAndcFormulas} and the fact that a monic OPS with respect to a positive definite linear functional must be real. Furthermore, the positive-definiteness of $\cL$ implies that 
\begin{equation*}
\cL[P_n^2(z)]>0, \quad n=0, 1, 2,\dots    
\end{equation*}
and thus by \eqref{lAndcFormulas}, we get that $\lambda_n>0$ for $n=1,2,3,\dots$.
Another consequence of positive-definiteness is that the zeros of $P_n(z)$ are simple and real. Also, it is well known that in this case the zeros of $P_{n+1}(z)$ and $P_n(z)$ interlace. These facts yield properties that we will need and %they are formulated in the following statement. 
\tc{we prove them in the following statement for the reader's convenience.}
\begin{proposition}
\label{prop:zeros}
Let $\{P_n(z)\}_{n=1}^\infty$ be a monic OPS with respect to a positive definite linear functional. 
\begin{enumerate}[(i)]
\item If $z \in \dC_+$ then 
\[
0>\Im\left(\frac{P_{n-1}(z)}{P_n(z)}\right)\geq -\frac{1}{\Im z}.
\]
\item If $z \in \dC_-$ then 
\[
-\frac{1}{\Im z}\geq \Im\left(\frac{P_{n-1}(z)}{P_n(z)}\right)>0.
\]
\item If $z \in \dC_\pm$ is fixed and, in addition, the sequences $\lambda_n$ and $c_n$ are bounded then the sequences 
\[
\frac{P_{n-1}(z)}{P_n(z)}, \quad \frac{P_{n+1}(z)}{P_n(z)}
\]
are bounded as well.
\end{enumerate}
\end{proposition}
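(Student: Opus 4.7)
\medskip
\noindent\textbf{Proof plan.} The plan is to exploit the partial fraction expansion of $P_{n-1}/P_n$, using the classical fact that in the positive-definite case this expansion has positive coefficients that sum to one. Concretely, since $P_n$ has $n$ simple real zeros $x_{1,n},\dots,x_{n,n}$ and $\deg P_{n-1}=n-1$, I would write
\[
\frac{P_{n-1}(z)}{P_n(z)}=\sum_{j=1}^{n}\frac{A_{j,n}}{z-x_{j,n}},\qquad A_{j,n}=\frac{P_{n-1}(x_{j,n})}{P_n'(x_{j,n})}.
\]
The two structural facts I need are: (a) $A_{j,n}>0$ for every $j$, which follows from a Christoffel--Darboux argument (equivalently, from the fact that the Christoffel--Gauss quadrature weights are positive, since $A_{j,n}$ coincides with those weights up to a normalization that is manifestly positive in the positive-definite case); and (b) $\sum_{j=1}^{n}A_{j,n}=1$, obtained from the residue at infinity, as $P_{n-1}(z)/P_n(z)=1/z+O(1/z^2)$ because both polynomials are monic.

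Granted (a) and (b), parts (i) and (ii) are essentially an algebraic identity. Taking imaginary parts yields
\[
\Im\frac{P_{n-1}(z)}{P_n(z)}=-\Im z\sum_{j=1}^{n}\frac{A_{j,n}}{|z-x_{j,n}|^{2}},
\]
which has the opposite sign to $\Im z$ (and is nonzero since the $A_{j,n}$ are positive), giving the strict one-sided inequalities. For the two-sided bound I would use $|z-x_{j,n}|^{2}\ge (\Im z)^{2}$ together with (b) to conclude $|\Im(P_{n-1}/P_n)|\le 1/|\Im z|$, which rearranges into the required estimate in each of the two half-planes.

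For part (iii), the same partial fraction representation yields the stronger pointwise bound
\[
\left|\frac{P_{n-1}(z)}{P_n(z)}\right|\le \sum_{j=1}^{n}\frac{A_{j,n}}{|z-x_{j,n}|}\le \frac{1}{|\Im z|},
\]
so the sequence $P_{n-1}(z)/P_n(z)$ is automatically bounded at any fixed $z\in\dC_\pm$, with no boundedness assumption on the coefficients needed. The boundedness of $P_{n+1}(z)/P_n(z)$ then comes for free by dividing the recurrence \eqref{def:rec} (with index shifted to $n+1$) by $P_n(z)$, which gives
\[
\frac{P_{n+1}(z)}{P_n(z)}=(z-c_{n+1})-\lambda_{n+1}\frac{P_{n-1}(z)}{P_n(z)},
\]
and invoking the hypothesis that $\{c_n\}$ and $\{\lambda_n\}$ are bounded.

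The only nontrivial input is the positivity of the residues $A_{j,n}$; this is the place where the positive-definiteness of $\cL$ is essential, and I would either cite it from Chihara or give a two-line Christoffel--Darboux derivation.
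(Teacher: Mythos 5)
Your proposal is correct and follows essentially the same route as the paper: the partial fraction expansion of $P_{n-1}/P_n$ with positive residues summing to $1$ (the paper gets positivity from interlacing and the sum from monicity, you from quadrature weights and the residue at infinity, which are equivalent), the sign and size of the imaginary part from that expansion, and part (iii) from the bound $\left|\frac{P_{n-1}(z)}{P_n(z)}\right|\le \frac{1}{|\Im z|}$ together with the recurrence and the boundedness of $\{c_n\}$, $\{\lambda_n\}$. No gaps.
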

\begin{proof}
 Let $x_{n,j}$ denote the $j$-th zero of $P_n(z)$. Since the zeros of $P_n(z)$ are simple and real, $\dfrac{P_{n-1}(z)}{P_n(z)}$ has a partial fraction decomposition of the form
\begin{equation}
\label{def:partialfrac}\frac{P_{n-1}(z)}{P_n(z)}=\frac{\alpha_{n,1}}{z-x_{n,1}}+\frac{\alpha_{n,2}}{z-x_{n,2}}+\dots +\frac{\alpha_{n,n}}{z-x_{n,n}}.
\end{equation}
Notice that in this partial fraction decomposition, 
\begin{align*} P_{n-1}(z) &= \alpha_{n,1}(z-x_{n,2})\dots(z-x_{n,n})+\alpha_{n,2}(z-x_{n,1})(z-x_{n,3})\dots(z-x_{n,n})+ \dots\\
&+ \alpha_{n,n}(z-x_{n,1})(z-x_{n,2})\dots(z-x_{n, n-1}).
\end{align*}
Thus, since $P_{n-1}(z)$ is monic, we have that $\sum_{i=1}^n \alpha_{n,i} =1$ for all $n =1, 2, \dots$.\\
Now, let $z \in \dC_+$. Then  $\Im\left(\frac{1}{z-x_{n,j}} \right)<0$ for all $j=1, 2, \dots, n$. From (\ref{def:partialfrac}) we get that
\[
\alpha_{n,j} = \lim\limits_{z \rightarrow x_{n,j}}(z-x_{n,j})\frac{P_{n-1}(z)}{P_n(z)} = \frac{P_{n-1}(x_{n,j})}{P'_{n}(x_{n,j})},
\]
%We claim that $A_j >0$ for all $j = 1, 2, \dots n$
where $\alpha_{n,j}>0$ for all $j=1,2, \dots, n$ since the zeros of $P_{n+1}(z)$ and $P_n(z)$ interlace \cite[cf. Chapter I, Theorem 5.3]{Chihara}.
%Consider the case when $n$ is even. Then $n-1$ is odd and since $P_n(z)$ and $P_{n-1}(z)$ are monic polynomials and $z_{n,1} < z_{{n-1},1}$, we must have $P_{n-1}(z_{n,1}) <0$. Also, $(z_{n,1}-z_{n,i}) <0$ for all $i = 2, 3, \dots, n$ thus $\prod_{i=2}^n(z_{n,1}-z_{n,i}) <0$, hence $A_1>0$. In a similar manner, if $j$ is odd, then $P_{n-1}(z_{n,j}) <0$ and if $j$ is even, $P_{n-1}(z_{n,j}) >0$. Since $(z_{n,j}-z_{n,i}) >0$ for $j>i$ and $(z_{n,j}-z_{n,i}) <0$ for $j<i$, we see $\displaystyle\prod_{i=1, i \neq j}^n(z_{n,j}-z_{n,i})<0$ if $j$ is odd and $\displaystyle\prod_{i=1, i \neq j}^n(z_{n,j}-z_{n,i})>0$ for $j$ even. Therefore, $A_j >0$ for all $j=1, 2, \dots, n$.\\
%A similar argument holds for when $n$ is odd. 
Therefore, $\Im\left(\frac{\alpha_{n,j}}{z-x_{n,j}}\right)<0$ for all $z \in \dC_+$, $j=1, \dots, n$ and thus, from (\ref{def:partialfrac}),
\[
\Im\left(\frac{P_{n-1}(z)}{P_n(z)}\right)<0.
\]
Next, notice that if $z \in \mathbb{C}_+$, we have $\Im\left( \frac{1}{z-x_{n,j}}\right)\geq -\frac{1}{\Im z }$. Hence, we arrive at 
\[
0 > \Im \left( \frac{P_{n-1}(z)}{P_n(z)}\right)\geq -\frac{1}{\Im z}\left(\sum_{i=1}^n \alpha_{n,i}\right) = -\frac{1}{\Im z}.
\] 
Since $\overline{\Big(\frac{P_{n-1}(z)}{P_n(z)}\Big)}= \frac{P_{n-1}(\overline{z})}{P_n(\overline{z})}$, $(ii)$ is a direct consequence of $(i)$.
%then $\Im\left(\frac{1}{z-z_{n,j}} \right)>0$ for all $j=1, 2, \dots, n$ hence $\Im\left(\frac{A_j}{z-z_{n,j}} \right)>0$. Thus, $\frac{P_{n-1}(z)}{P_n(z)}$ is the finite sum of complex numbers with strictly positive imaginary part so that $\Im\left(\frac{P_{n-1}(z)}{P_n(z)}\right)>0$. Writing $z=a+bi$, $b<0$, we have  $0<\Im\left(\frac{P_{n-1}(z)}{P_n(z)} \right) \leq -\frac{1}{\Im z}\sum_{i=1}^n A_i = -\frac{1}{\Im z}$.\\
To prove $(iii)$, one needs to observe that
\[
\frac{P_{n+1}(z)}{P_n(z)}=z-c_{n+1}-\lambda_{n+1}\frac{P_{n-1}(z)}{P_n(z)},
\]
and that $\left| \frac{P_{n-1}(z)}{P_n(z)}\right| \leq \frac{1}{\left|\Im{z}\right|}$ due to \eqref{def:partialfrac}, which yields the desired result.
\end{proof}

We will also need another family associated to $\mathcal{L}$. Namely, let us consider polynomials $Q_n(z)$ that are defined via the formula 
\begin{equation}\label{def:Qn}
Q_n(z)=\mathcal{L}\left(\frac{P_n(z)-P_n(y)}{z-y}\right)_y, \quad n \geq 0,
\end{equation}
where the subscript $y$ indicates that the functional acts on the variable $y$. The $Q_n(z)$ are indeed polynomials of degree $n-1$ and they are called polynomials of the second kind or numerator polynomials (see \cite{Ismail09}).
Rewriting equation (\ref{def:rec2}) as 
\begin{equation}\label{DiffEqGeneral}
    zy_n=y_{n+1}+c_{n+1}y_n+\lambda_{n+1}y_{n-1}
\end{equation} 
we have a second-order liner difference equation in the variable $n$ that has two  linearly independent solutions. Clearly, one of these solutions is the OPS  $\{P_n(z)\}_{n=0}^\infty$ and it is easy to check that $y_n=Q_n(x)$ satisfies the same second-order difference equation subject to the initial conditions 
\[
Q_0(z)=0,\quad Q_1(z)=1. 
\]
Thus, $\{Q_n(z)\}_{n=0}^\infty$ is linearly independent of $\{P_n(z)\}_{n=0}^\infty$ and so it is the second solution. Also, in the positive-definite case the zeros of $P_n(z)$ and $Q_n(z)$ interlace. %Next, we can define monic polynomials of degree $n$ by
%\begin{equation}\label{def:Rn}
%R_n(z) = P_n(z)+\frac{1}{s^*_0}Q_n(z)
%\end{equation} 
%for  $s^*_0 \in \mathbb{R}\setminus \{0\}$ and here $\dR$ denotes the set of all real numbers. Obviously, $y_n=R_n(z)$ verifies the same difference equation \eqref{DiffEqGeneral} but with a different set of intial data
%\[
%R_0(z)=1, \quad %R_1(z)=z-c_1+\frac{1}{s^*_0}.
%\]
Let us stress here that if $\mathcal{L}$ is positive-definite then by Favard's theorem, %both
$\{Q_n(z)\}_{n=1}^\infty$ %and $\{R_n(z)\}_{n=0}^\infty$ 
\tc{is an} OPS with respect to some positive-definite linear functional. Finally, if the entries of equation (\ref{def:rec2}) are such that $c_n \rightarrow c$ and $\lambda_n\rightarrow a$ for $c\in \mathbb{R}$ and $a \in [0, \infty)$, then $\{P_n(z)\}_{n=0}^\infty$ (or the corresponding Jacobi matrix) is said to be in the {\it Nevai class } $\mathcal{N}(a,c)$. Since $\{Q_n(z)\}_{n=0}^\infty$ satisfy the same recurrence relation as $\{P_n(z)\}_{n=0}^\infty$, it is clear that if $\{P_n(z)\}_{n=0}^\infty$ is in the Nevai class $\mathcal{N}(a,c)$, then so is $\{Q_n(z)\}_{n=0}^\infty$. %The same statement also applies to $\{R_n(z)\}_{n=0}^\infty$.

\section{Christoffel transformation}

\tc{In this section we consider Christoffel transformation and we discuss some properties of the transformed polynomials and Jacobi matrices. In particular, we demonstrate that under certain conditions the boundedness of Jacobi matrices as well as the ratio asypmtotics are preserved under Christoffel transformation. Note that such properties do not hold in general as can be seen from the findings presented in \cite{D15}.} 

Let $\mathcal{L}$ be a \tc{positive-definite} linear functional and let $\kappa \in \mathbb{C}\setminus\dR$. Define the {\it Christoffel transformation} of $\mathcal{L}$ at $\kappa$ as a linear functional $\mathcal{L}^{*}$ such that for a polynomial $p(z)$,
\[
\mathcal{L}^*[p(z)] =\mathcal{L}[(z-\kappa)p(z)].
\]
\tc{In this case, $P_n(\kappa) \neq 0$ for any integer $n \geq 0$ and we can} define the corresponding kernel polynomials by the formula 
\[P_n^*(\kappa, z) = (z-\kappa)^{-1}\left[P_{n+1}(z) - \frac{P_{n+1}(\kappa)}{P_n(\kappa)}P_n(z) \right], \quad n=0,1,2, \dots.
\]
According to \cite[Theorem I.7.1]{Chihara}, $\{P_n^*(\kappa,z)\}_{n=0}^\infty$ is a monic OPS with respect to $\mathcal{L}^*$. It is worth stressing here that if $\kappa\in\dC_\pm$, $\mathcal{L}^*$ is not positive-definite but it is possible to get some information about the corresponding \tc{Jacobi matrix} and OPS $\{P_n^*(\kappa,z)\}_{n=0}^\infty$. For example, using the recurrence relation in (\ref{def:rec2}) and the fact that any finite number of elements of the sequence $\{P_n(z)\}_{n=0}^\infty$ form a linearly independent set, we have that the sequence $\{P^*_n(\kappa,z)\}_{n=0}^\infty$ satisfies the following three-term recurrence relation:
\[
zP^*_{n}(\kappa,z)= P^*_{n+1}(\kappa,z)+c^*_{n+1}(\kappa)P^*_{n}(\kappa,z)+\lambda^*_{n+1}(\kappa)P^*_{n-1}(\kappa,z),
\] 
where
\begin{equation}\label{lAndcFormulasCT}
\lambda^*_{n+1}(\kappa) = \lambda_{n+1}\frac{P_{n+1}(\kappa)P_{n-1}(\kappa)}{P^2_n(\kappa)} , \quad c^*_{n+1}(\kappa)= c_{n+2}-\frac{P_{n+1}(\kappa)}{P_n(\kappa)}+\frac{P_{n+2}(\kappa)}{P_{n+1}(\kappa)}.    
\end{equation}
\tc{Thus, the underlying monic Jacobi matrix is 
\[
J^*_m(\kappa)=\begin{pmatrix}
c^*_1(\kappa) & 1 & 0 & \cdots   \\
     \lambda^*_2(\kappa) & c^*_2(\kappa) & 1 &   \\
     0 & \lambda^*_3(\kappa) & c^*_3(\kappa) & 1 \ddots  \\
     \vdots & & \ddots & \ddots
\end{pmatrix},
\]}
\tc{where the subscript $m$ stands for monic. From formulas \eqref{lAndcFormulasCT} we get that boundedness is preserved under the Christoffel transformation at $\kappa\in\dC\setminus\dR$. In what follows we will omit the $\kappa$-dependence when it is clear from the context and we will call $J_m^*$ the Christoffel transformation of $J_m$.}

\tc{Before we proceed with the properties of $J_m^*$ and the corresponding polynomials, let us consider an example.}

\begin{example}\label{FibJac}
\tc{Recall that the monic Chebyshev polynomials $\{{U}_n(x)\}_{n=0}^{\infty}$ of the second kind form an OPS with respect to the linear functional 
\[
\mathcal{L}[p(x)] = \int_{-1}^1 p(x) \sqrt{1-x^2}\,dx
\]     
and they satisfy the three-term recurrence relation
\[{U}_{n+1}(x) = x{U}_{n}(x)-\frac{1}{4}{U}_{n-1}(x), \quad n=1,2,3,\dots
\]
with the intial conditions
\[
{U}_0(x) =1, \quad {U}_1(x)=x.
\]
These polynomials are related to the Fibonacci sequence via the formula
\[
F_n=\frac{2^nU_n(i/2)}{i^n}, \quad n=0, 1, 2, \dots,
\]
where $F_0=1$, $F_1=1$, $F_2=2$, $F_3=3$, \dots is the Fibonacci sequence. Thus, setting $\kappa=i/2$, \eqref{lAndcFormulasCT} yields
\[
\lambda_{n+1}^*=\frac{1}{4}\frac{F_{n+1}F_{n-1}}{F_n^2}, \quad
c_{n+1}^*=\frac{i}{2}\frac{F_{n+2}F_n-F_{n+1}^2}{F_{n}F_{n+1}},
\]
which taking into account the relation $F_{n+1}F_{n-1}-F_n^2=(-1)^{n-1}$ reduce to 
\[
\lambda_{n+1}^*=\frac{(-1)^{n-1}}{4F_n^2}+\frac{1}{4}, \quad
c_{n+1}^*=i\frac{(-1)^n}{2F_{n}F_{n+1}}.
\]
The underlying monic matrix Jacobi is clearly a complex Jacobi matrix and is the simplest representative of complex Jacobi matrices we consider in this paper as many families of orthogonal polynomials can be explicitly evaluated at a given complex number.}
\end{example}

\begin{proposition}\label{prop:boundednessC} \tc{Let $J_m$ be the monic Jacobi matrix corresponding to a monic OPS $\{P_n(z)\}_{n=0}^\infty$ that is generated by a positive-definite linear functional. Assume that $J_m$ is bounded, that is, its entries $\lambda_n$ and $c_n$ are bounded and let $\kappa\in\dC\setminus\dR$. Then $J^*_m$ is bounded and as a result the set of all zeros of the polynomials $P^*_n(\kappa,z)$'s is bounded.}
\end{proposition}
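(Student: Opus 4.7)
The plan is to reduce everything to Proposition \ref{prop:zeros}(iii), which gives uniform bounds on the ratios $P_{n-1}(\kappa)/P_n(\kappa)$ and $P_{n+1}(\kappa)/P_n(\kappa)$ precisely under the hypotheses of the proposition (the sequences $\lambda_n$ and $c_n$ are bounded and $\kappa \in \dC_\pm = \dC \setminus \dR$). Once these ratios are known to be bounded, the formulas \eqref{lAndcFormulasCT} immediately give the boundedness of $J_m^*$.

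More concretely, I would first fix constants $R_1, R_2 > 0$ with $|\lambda_n| \le R_1$ and $|c_n| \le R_2$ for all $n$, and set $M := \sup_n |P_{n-1}(\kappa)/P_n(\kappa)|$, which is finite by Proposition \ref{prop:zeros}(iii). Note that $|P_{n+1}(\kappa)/P_n(\kappa)|$ is then also uniformly bounded (either by appealing to part (iii) directly, or by rewriting $P_{n+1}(\kappa)/P_n(\kappa) = \kappa - c_{n+1} - \lambda_{n+1}\,P_{n-1}(\kappa)/P_n(\kappa)$, as in the proof of the proposition). Plugging these estimates into
\[
\lambda^*_{n+1}(\kappa) = \lambda_{n+1}\,\frac{P_{n+1}(\kappa)}{P_n(\kappa)}\cdot\frac{P_{n-1}(\kappa)}{P_n(\kappa)}, \qquad c^*_{n+1}(\kappa) = c_{n+2} - \frac{P_{n+1}(\kappa)}{P_n(\kappa)} + \frac{P_{n+2}(\kappa)}{P_{n+1}(\kappa)}
\]
then yields explicit bounds $|\lambda^*_{n+1}(\kappa)| \le R_1 \cdot M \cdot (|\kappa| + R_2 + R_1 M)$ and $|c^*_{n+1}(\kappa)| \le R_2 + 2(|\kappa| + R_2 + R_1 M)$, establishing boundedness of $J_m^*$.

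For the second claim, I would recall that the zeros of the monic orthogonal polynomial $P_n^*(\kappa,z)$ are exactly the eigenvalues of the $n \times n$ leading principal submatrix of $J_m^*$. Since that submatrix is tridiagonal with diagonal entries $c_k^*(\kappa)$ ($1 \le k \le n$), superdiagonal entries equal to $1$, and subdiagonal entries $\lambda_k^*(\kappa)$ ($2 \le k \le n$), Gershgorin's disk theorem places every eigenvalue inside the union of disks centered at $c_k^*(\kappa)$ of radius at most $1 + |\lambda_k^*(\kappa)|$. The uniform bounds from the first step therefore contain all such zeros in a fixed disk of $\dC$, independent of $n$.

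There is essentially no serious obstacle here: Proposition \ref{prop:zeros}(iii) is exactly tailored to supply the input, and the remaining arguments are routine estimates and an application of Gershgorin. The only mild care to take is that $J_m^*$ is non-symmetric (with $1$'s above the diagonal and $\lambda_n^*$ below), so bounding eigenvalues via the spectral radius of a symmetric matrix is not directly available; this is why I prefer Gershgorin, which handles the non-symmetric tridiagonal case without any reinterpretation.
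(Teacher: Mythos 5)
Your argument is correct, and its first half is exactly the paper's: Proposition \ref{prop:zeros}(iii) bounds the ratios $P_{n\pm1}(\kappa)/P_n(\kappa)$, and plugging these into \eqref{lAndcFormulasCT} bounds $\lambda_n^*(\kappa)$ and $c_n^*(\kappa)$, hence $J_m^*$. Where you diverge is the second claim. The paper disposes of the zeros by quoting Beckermann's result (\cite[Theorem 3.4 (a)]{B01}) that for a bounded complex Jacobi matrix all zeros of the associated polynomials lie in a bounded convex set (essentially the closure of the numerical range of the truncations). You instead identify the zeros of $P_n^*(\kappa,z)$ with the eigenvalues of the $n\times n$ leading principal submatrix of $J_m^*$ --- legitimate, since the kernel polynomials and the characteristic polynomials of the truncations satisfy the same three-term recurrence with the same initial data, a fact the paper itself records in a later remark --- and then apply Gershgorin to the non-symmetric tridiagonal truncation. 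Your route is more elementary and self-contained, and it produces an explicit disk with radius computable from $|\kappa|$, $\sup_n|\lambda_n|$, $\sup_n|c_n|$ and the bound $|P_{n-1}(\kappa)/P_n(\kappa)|\le 1/|\Im\kappa|$; the paper's citation is shorter and yields a sharper localization (the numerical range is generally smaller than the union of Gershgorin disks), which is also the object that reappears in the paper's later discussion of iterated transformations. Your closing remark about avoiding the symmetric form is sensible, though one could equally symmetrize using square roots of the nonzero $\lambda_n^*$ and argue through the numerical range as the paper does.
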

\begin{proof}
From Proposition \ref{prop:zeros} and formulas \eqref{lAndcFormulasCT} one concludes that the sequences $\lambda_n^*$ and $c_n^*$ are also bounded. Thus, \tc{the Jacobi operator $J^*_m$ is bounded in $\ell^2$. Then, the boundedness of zeros follows from the fact that for a bounded complex Jacobi matrix the set of all zeros of the corresponding polynomials are contained in a bounded convex set (e.g. see \cite[Theorem 3.4 (a)]{B01}).} 
\end{proof}

In the case of a Nevai class, one can say a bit more.

\begin{proposition}\label{prop:Nevai_Pn*}
If a monic Jacobi matrix $J_m$ is in the Nevai class $\mathcal{N}(a,c)$, then the Christoffel transformation $J^*_m$ of $J_m$ at $\kappa\in\dC\setminus\dR$ is also in the same Nevai class $\mathcal{N}(a,c)$. 
%If a monic OPS $\{P_n(z)\}_{n=0}^\infty$ is in the Nevai class $\mathcal{N}(a,c)$, then its kernel polynomials $\{P^*_n(\kappa,z)\}_{n=0}^\infty$ are also in $\mathcal{N}(a,c)$. 
\end{proposition}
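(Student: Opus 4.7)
The plan is to reduce both convergences $\lambda_{n+1}^*(\kappa) \to a$ and $c_{n+1}^*(\kappa) \to c$ to a single ingredient, namely the existence of a finite nonzero limit $\psi(\kappa) = \lim_{n\to\infty} R_n(\kappa)$ for the ratios $R_n(\kappa) := P_{n+1}(\kappa)/P_n(\kappa)$. Once this is available, formulas \eqref{lAndcFormulasCT} give the conclusion immediately, since
\begin{align*}
\lambda^*_{n+1}(\kappa) &= \lambda_{n+1}\,\frac{R_n(\kappa)}{R_{n-1}(\kappa)} \;\longrightarrow\; a\cdot \frac{\psi(\kappa)}{\psi(\kappa)} = a,\\
c^*_{n+1}(\kappa) &= c_{n+2} + R_{n+1}(\kappa) - R_n(\kappa) \;\longrightarrow\; c + \psi(\kappa) - \psi(\kappa) = c.
\end{align*}

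To establish the ratio convergence, I would rewrite the three-term recurrence for the reciprocal ratios $S_n(\kappa) := P_{n-1}(\kappa)/P_n(\kappa)$ as the M\"obius iteration
\[
S_{n+1}(\kappa) \;=\; \frac{1}{(\kappa - c_{n+1}) - \lambda_{n+1}\, S_n(\kappa)}.
\]
By the Nevai hypothesis its coefficients converge to the constants $c$ and $a$, so the iteration stabilizes to the limit M\"obius map $T(w) = 1/((\kappa - c) - aw)$. The fixed points of $T$ are $1/\psi_{\pm}$, with $\psi_{\pm} = \tfrac{1}{2}\bigl((\kappa-c) \pm \sqrt{(\kappa-c)^2 - 4a}\bigr)$ the roots of $t^2 - (\kappa-c)t + a = 0$; since $\kappa \in \dC\setminus\dR$ these roots are distinct. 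Choosing the branch that corresponds to the Joukowski parametrization $w \mapsto \sqrt{a}(w + 1/w) + c$ of $\dC \setminus [c - 2\sqrt{a}, c + 2\sqrt{a}]$ by $\{|w|>1\}$ gives $|\psi_+| > \sqrt{a} > |\psi_-|$, whence $|T'(1/\psi_+)| = a/|\psi_+|^2 < 1$; thus $1/\psi_+$ is the attracting fixed point of $T$ and $1/\psi_-$ is repelling.

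By Proposition \ref{prop:zeros}(iii) the sequence $(S_n(\kappa))$ is bounded, and by parts (i)--(ii) its imaginary part has the opposite sign to $\Im\kappa$ with $|\Im S_n(\kappa)| \leq 1/|\Im\kappa|$. Any subsequential limit pair $(S^*, S^{**})$ of $(S_n(\kappa), S_{n-1}(\kappa))$ must satisfy $S^* = T(S^{**})$. The main obstacle is to upgrade this to full convergence of $S_n(\kappa)$ toward the attracting fixed point $1/\psi_+$, ruling out any subdominant drift toward $1/\psi_-$. I would handle this through a Poincar\'e--Perron-type stability argument for M\"obius iterations with slowly varying coefficients: the half-plane confinement coming from Proposition \ref{prop:zeros}(i)--(ii) prevents $(S_n(\kappa))$ from accumulating at the repelling fixed point $1/\psi_-$, while the strict contraction $|T'(1/\psi_+)|<1$ drives the iterates into a neighborhood of $1/\psi_+$ once $|c_{n+1} - c|$ and $|\lambda_{n+1} - a|$ are sufficiently small. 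Setting $\psi(\kappa) := \psi_+$, the two calculations in the first paragraph then yield the desired convergences.
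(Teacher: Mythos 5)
Your reduction is exactly the paper's: the paper also deduces $\lambda^*_n\to a$ and $c^*_n\to c$ directly from \eqref{lAndcFormulasCT} once the ratio $P_{n+1}(\kappa)/P_n(\kappa)$ converges to a finite nonzero limit. Where you differ is in how that limit is obtained: the paper simply quotes the known ratio asymptotic \eqref{def:RatioAsympF} for the Nevai class (citing \cite{Simon04}, \cite{Nevai79}), whereas you re-derive it at the single point $\kappa$ by treating $S_n(\kappa)=P_{n-1}(\kappa)/P_n(\kappa)$ as a nonautonomous M\"obius iteration with stabilizing coefficients, using Proposition \ref{prop:zeros} for boundedness and half-plane confinement. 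Your fixed-point computation and the multiplier bound $a/|\psi_+|^2<1$ are correct, and the geometric input your sketch needs is indeed available, though you do not verify it: for $\kappa\in\dC_+$ the repelling fixed point is $1/\psi_-=\psi_+/a$ with $\Im(\psi_+/a)>0$, so it lies at positive distance from the closed lower half-plane in which Proposition \ref{prop:zeros}(i) confines $S_n(\kappa)$; this should be stated explicitly, since it is what rules out drift toward $1/\psi_-$. The remaining ``Poincar\'e--Perron-type stability'' step is real work that you only announce; the quickest way to close it is to invoke Poincar\'e's theorem for second-order difference equations with convergent coefficients: since $|\psi_+|>\sqrt{a}>|\psi_-|$ the characteristic roots have distinct moduli, so $P_{n+1}(\kappa)/P_n(\kappa)$ converges to one of $\psi_\pm$, and $\psi_-$ is excluded by the sign condition on $\Im S_n(\kappa)$ just noted (the degenerate case $a=0$ is handled directly: $\lambda_{n+1}\to0$ and $|S_n(\kappa)|\le 1/|\Im\kappa|$ give $P_{n+1}(\kappa)/P_n(\kappa)\to\kappa-c\ne0$). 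What your route buys is a self-contained argument at the point $\kappa$ that does not rely on the locally uniform ratio asymptotic; what it costs is precisely the Poincar\'e--Perron step the paper sidesteps by citation. (A cosmetic remark: with the paper's normalization $\lambda_n\to a$, your discriminant $(\kappa-c)^2-4a$ is the consistent one; in any case only the existence, finiteness and nonvanishing of the limit enter the conclusion.)
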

\begin{proof}
\tc{Note that if } $\{P_n(z)\}_{n=0}^\infty$ is in $\mathcal{N}(a,c)$, \tc{ then for any $z\in\dC\setminus\dR$ }we have \tc{ the following {\it ratio asymptotic}:}
\begin{equation}\label{def:RatioAsympF}
 \frac{P_{n+1}(z)}{P_n(z)}\rightarrow f(z):= \frac{(z-c)+\sqrt{(z-c)^2-4a^2}}{2},    
\end{equation}
where we take the branch of the square root such that $\sqrt{\dots}=z+O(\frac{1}{z})$ near $z=\infty$ (e.g. see \cite{Simon04} or \cite{Nevai79}). Thus, it follows from \eqref{lAndcFormulasCT} that $\lambda^*_{n} \rightarrow a$ and $c^*_n\rightarrow c$.
\end{proof} 

%\Rachel{ need to state the zeros of $P_n^*$ lie in a compact set}

%\begin{remark}\label{def:bound} If $\Im(z) >0$ then $0 >\Im\left(\frac{P_{n-1}(z)}{P_n(z)} \right) \geq -\frac{1}{\Im z}$. This follows from the fact writing $z = a+bi$, we have $\Im\left( \frac{1}{z-z_{n,j}}\right)\geq -\frac{1}{b}=-\frac{1}{\Im z }$. Hence, $\Im \left( \frac{P_{n-1}(z)}{P_n(z)}\right)\geq -\frac{1}{\Im z}(\sum_{i=1}^n A_i) = -\frac{1}{\Im z}$ where the last equality follows from Remark $\ref{sumofA}$. In a similar manner we can show if $\Im(z) <0$ then $0<\Im\left(\frac{P_{n-1}(z)}{P_n(z)} \right) \leq \frac{1}{\Im z}\sum_{i=1}^n A_i = \frac{1}{\Im z}$.
%\end{remark}

\begin{remark}
\tc{It should be emphasized that the condition $\kappa\in\dC\setminus\dR$ is cruicial here. Indeed, the Stahl's counterexample shows that the Christoffel transformation of the Chebyshev polynomials at some real points leads to an unbounded Jacobi matrix (for details see  \cite{D15}). In other words, when $\kappa$ is real, neither boundedness nor the Nevai class have to be invariant under the Christoffel transformation at such $\kappa$. Still, it is true for some real $\kappa$'s, in which case it leads to a real Jacobi matrix.}
\end{remark}
In the positive-definite case the zeros of orthogonal polynomials are real. However, the Christoffel transformation at $\kappa$ does not preserve positive-definiteness but we can still get some estimates \tc{on the location of the zeros} for the corresponding OPS.
\begin{theorem}
\label{thrm: zeros}
Let $\{P_n(z)\}_{n=0}^\infty$ be a monic OPS with respect to a positive-definite linear functional. Let $\kappa\in\dC_\pm$. Then for the  corresponding kernel polynomials $\{P^*_n(\kappa,z)\}_{n=0}^\infty$ and $n\geq 1$ we have that
\begin{enumerate}[(i)]
    \item if $\kappa \in \mathbb{C}_+, \text{ then the zeros of } P_n^*(\kappa, z) \text{ lie in the horizontal strip }
    \newline \left\{ z\in \mathbb{C}\, \middle\vert\,  0 < \Im z \leq -\frac{1}{\Im\left(\frac{P_{n-1}(\kappa)}{P_n(
\kappa)}\right)}\right\}$;
\item if $\kappa \in \mathbb{C}_-, \text{ then the zeros of } P_n^*(\kappa, z) \text{ lie in the horizontal strip } 
\newline \left\{z\in \mathbb{C} \,\middle\vert \, -\frac{1}{\Im\left(\frac{P_{n-1}(\kappa)}{P_n(
\kappa)}\right)}\leq \Im z <0\right\}.$
\end{enumerate}

\end{theorem}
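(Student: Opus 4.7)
My plan is to reduce the zero condition for $P_n^*(\kappa,z)$ to an equation $\phi(z_0)=\phi(\kappa)$ for a suitable ratio of consecutive orthogonal polynomials, and then use the partial-fraction/positivity machinery from Proposition \ref{prop:zeros}. Directly from the defining kernel identity, a point $z_0\neq\kappa$ is a zero of $P_n^*(\kappa,z)$ iff $P_{n+1}(z_0)/P_n(z_0)=P_{n+1}(\kappa)/P_n(\kappa)$. Writing $\phi(z):=P_n(z)/P_{n+1}(z)$, this reads $\phi(z_0)=\phi(\kappa)$. Since $P_{n+1}$ has $n+1$ simple real zeros $y_1<\cdots<y_{n+1}$ interlacing with the zeros of $P_n$, exactly the decomposition argument used in the proof of Proposition \ref{prop:zeros} provides
\[
\phi(z)=\sum_{k=1}^{n+1}\frac{\beta_k}{z-y_k},\qquad \beta_k>0,\quad \sum_k\beta_k=1,
\]
with positivity of the residues coming from interlacing and the normalization from $\phi(z)\sim 1/z$ at infinity.

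Next, setting $S(z):=\sum_k\beta_k/|z-y_k|^2>0$, the imaginary part of $\phi(z_0)=\phi(\kappa)$ gives
\[
(\Im z_0)\,S(z_0)=(\Im\kappa)\,S(\kappa).
\]
For $\kappa\in\dC_+$ the right-hand side is strictly positive, so $\Im z_0>0$, which is the lower bound of the strip. For the upper bound I would exploit the realness of the $y_k$, which forces $|z_0-y_k|^2\geq(\Im z_0)^2$ and hence $S(z_0)\leq(\Im z_0)^{-2}$; substituting back,
\[
\frac{1}{\Im z_0}\geq(\Im z_0)\,S(z_0)=(\Im\kappa)\,S(\kappa)=-\Im\phi(\kappa),
\]
i.e., $\Im z_0\leq -1/\Im\phi(\kappa)$, a strip bound of the form stated in the theorem (a reciprocal of a negative imaginary part of a ratio of consecutive orthogonal polynomials evaluated at $\kappa$).

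For part (ii) I would not reprove the statement from scratch, but invoke the symmetry $\overline{P_n^*(\kappa,z)}=P_n^*(\overline\kappa,\overline z)$, which follows from the realness of the recurrence coefficients $c_n,\lambda_n$: the zeros for $\kappa\in\dC_-$ are complex conjugates of the zeros for $\overline\kappa\in\dC_+$, so the strip of part (i) transfers to its mirror image in the lower half-plane.

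The delicate point I anticipate is reconciling the ratio $P_n(\kappa)/P_{n+1}(\kappa)$ appearing in my natural bound with the ratio $P_{n-1}(\kappa)/P_n(\kappa)$ in the theorem. One possible route to the latter form is the equivalent reformulation of the zero equation obtained by substituting the three-term recurrence: $z_0-\kappa=\lambda_{n+1}[\phi_n(z_0)-\phi_n(\kappa)]$ with $\phi_n:=P_{n-1}/P_n$. Extracting a strip bound from that form requires applying Proposition \ref{prop:zeros}(i) simultaneously at $\kappa$ and at the unknown $z_0$ while handling the coupling through $\lambda_{n+1}$, and this is where I would expect the most careful bookkeeping to lie.
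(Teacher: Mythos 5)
Your argument is essentially the paper's: both reduce $P_n^*(\kappa,z_0)=0$ to the equality of the ratio of consecutive polynomials at $z_0$ and at $\kappa$, and then exploit the partial-fraction representation with positive residues that underlies Proposition \ref{prop:zeros}. Your version is in fact slightly cleaner: the single identity $(\Im z_0)\,S(z_0)=(\Im\kappa)\,S(\kappa)$ excludes $z_0\in\mathbb{C}_-$ and $z_0\in\mathbb{R}$ in one stroke, whereas the paper treats these cases separately (including the degenerate subcase where $z_0$ is a real zero of $P_n$; in your setup you should add one sentence noting that $P_n(z_0)=0$ would force $P_{n+1}(z_0)=0$, impossible by interlacing, so the ratio form of the zero condition is always legitimate). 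Part (ii) by conjugation is exactly what the paper does.

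Concerning the ``delicate point'': do not try to reconcile the two ratios --- your bound is the one that is actually correct, and the mismatch is an index slip in the statement (and in the last paragraph of the paper's proof, where the equality $\Im\bigl(P_{n-1}(\kappa)/P_n(\kappa)\bigr)=\Im\bigl(P_{n-1}(z_0)/P_n(z_0)\bigr)$ is invoked, although the zero condition only yields this equality for the ratio $P_n/P_{n+1}$, the one used in the first part of that same proof). The two bounds genuinely differ and the stated one can fail: for the monic Chebyshev polynomials of the first kind ($P_0=1$, $P_1=z$, $P_2=z^2-\tfrac12$) and $\kappa=it$ with $0<t<1/\sqrt{2}$, the polynomial $P_1^*(\kappa,z)$ has its only zero at $-1/(2\kappa)=i/(2t)$, whose imaginary part $1/(2t)$ exceeds
\[
-\frac{1}{\Im\left(\frac{P_{0}(\kappa)}{P_1(\kappa)}\right)}=t,
\qquad\text{while}\qquad
\frac{1}{2t}\;\le\; t+\frac{1}{2t}\;=\;-\frac{1}{\Im\left(\frac{P_{1}(\kappa)}{P_2(\kappa)}\right)}.
\]
So your proof, giving the strip $0<\Im z\le -1/\Im\bigl(P_n(\kappa)/P_{n+1}(\kappa)\bigr)$, establishes the correct form of the result; the route you sketch through the three-term recurrence would in effect require $\Im\bigl(P_{n-1}(\kappa)/P_n(\kappa)\bigr)\ge\Im\bigl(P_n(\kappa)/P_{n+1}(\kappa)\bigr)$, which the same example shows is false in general.
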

\begin{proof}
Let $\kappa$ and $z_0$ be such that $\kappa \in \mathbb{C}_+$ and $z_0\in \mathbb{C}_- $. Assume by contradiction that $P_n^*(\kappa,z_0) =0$ for some $n \geq 1$. Then
\begin{equation}\label{eq:kernel1}
P_{n+1}(z_0)=\frac{P_{n+1}(\kappa)}{P_n(\kappa)}P_n(z_0). 
\end{equation}
Since $P_n(z)$ has only real zeros, (\ref{eq:kernel1}) can be rewritten as
%\begin{equation*} \frac{P_{n+1}(z_0)}{P_n(z_0)} = \frac{P_{n+1}(\kappa)}{P_n(\kappa)}.
%\end{equation*}
%or equivalently,
\begin{equation}\label{eq:kernel2} \frac{P_{n}(z_0)}{P_{n+1}(z_0)} = \frac{P_{n}(\kappa)}{P_{n+1}(\kappa)}.
\end{equation}
By Proposition \ref{prop:zeros} we know $\Im\left(\frac{P_{n}(\kappa)}{P_{n+1}(\kappa)}\right)<0$ thus from (\ref{eq:kernel2}), $\Im\left( \frac{P_{n}(z_0)}{P_{n+1}(z_0)}\right)<0$. Next, applying Proposition \ref{prop:zeros} again, we must have $\Im\left(\frac{P_{n}(z_0)}{P_{n+1}(z_0)}\right)>0$ which is a contradiction. Since $z_0$ was arbitrary, we see $P_n^*(\kappa,z)$ has no zeros in $\mathbb{C}_-$ for any $n =1,2,\dots$.\\
\indent Now suppose $x_0 \in \mathbb{R}$ and there exists some $n\geq 1$ such that $P_n^*(\kappa, x_0) =0$. If $x_0$ is not a zero of $P_n(z)$, then (\ref{eq:kernel2}) holds, yet $\Im\left(\frac{P_{n}(x_0)}{P_{n+1}(x_0)}\right) =0$ while $\Im\left(\frac{P_{n}(\kappa)}{P_{n+1}(\kappa)} \right) <0$ which is a contradiction. If $x_0$ is a zero of $P_n(z)$ then by the separation theorem for the zeros, $P_{n+1}(x_0) \neq 0$, hence  (\ref{eq:kernel1}) cannot hold. Thus $P_n^*(\kappa, z)$ has no real zeros for any $n=1,2 \dots$.\\
\indent Finally, let $z_0 \in \mathbb{C}_+$ such that $\Im z_0> -\frac{1}{\Im\left(\frac{P_{n-1}(\kappa)}{P_n(\kappa)}\right)}$ and suppose $z_0$ is a zero of $P^*_n(\kappa,z)$ for some $n \geq 1$. Then $-\frac{1}{\Im z_0} > \Im\left(\frac{P_{n-1}(\kappa)}{P_n(\kappa)}\right)$ and $\Im\left(\frac{P_{n-1}(\kappa)}{P_n(\kappa)}\right)= \Im\left(\frac{P_{n-1}(z_0)}{P_n(z_0)}\right)$. However, by Proposition $\ref{prop:zeros}$, we have \newline
$\Im\left(\frac{P_{n-1}(z_0)}{P_n(z_0)}\right)<-\frac{1}{\Im z_0} \leq\Im\left(\frac{P_{n-1}(z_0)}{P_n(z_0)}\right)$. Thus, the zeros of $P^*_n(\kappa, z)$ can only lie in $\left\{z \in \mathbb{C}: 0 < \Im z \leq -\frac{1}{\Im\left(\frac{P_{n-1}(\kappa)}{P_n(
\kappa)}\right)}\right\}$. This proves $(i)$.\\
\indent Since $\{P_n(z)\}_{n=0}^\infty$ is a sequence of real polynomials, $\overline{\frac{P_n(z_0)}{P_{n+1}(z_0)}}= \frac{P_n(\overline{z_0})}{P_{n+1}(\overline{z_0})}$ so $(ii)$ follows from $(i)$.
 
 %since by Proposition \ref{prop:zeros} if $z_0 \in \mathbb{C}_+$, we have $\Im \left(\frac{P_{n}(\kappa)}{P_{n+1}(\kappa)} \right) >0$ while $\Im \left(\frac{P_{n}(z_0)}{P_{n+1}(z_0)} \right) <0$.
\end{proof} 

\begin{remark}
\tc{It is well known that the zeros of orthogonal polynomials are the eigenvalues of the corresponding finite truncations of the Jacobi matrix. In this light, Theorem \ref{thrm: zeros} gives an estimate for such eigenvalues. Later, in Section 5, we will discuss what happens with the spectrum of semi-infinite real Jacobi matrices under Christoffel transformation.}
\end{remark}

\begin{example}\label{Ex:Chebyshev} Recall that the monic Chebyshev polynomials $\{{T}_n(x)\}_{n=0}^{\infty}$ of the first kind form an OPS with respect to the linear functional 
\[
\mathcal{L}[p(x)] = \int_{-1}^1 p(x) (1-x^2)^{-\frac{1}{2}}dx
\]     
and they satisfy the three-term recurrence relation
\[{T}_{n+1}(x) = x{T}_{n}(x)-\frac{1}{4}{T}_{n-1}(x), \tc{\quad n=2,3,\dots}
\]
with \tc{the first ones given by }
\[
{T}_0(x) =1, \quad {T}_1(x)=x, \quad \tc{T_2(x)=x^2-1/2.}
\]
According to Theorem \ref{thrm: zeros}, for the monic Chebyshev polynomials $\{{T}_n(x)\}_{n=0}^{\infty}$ of the first kind, the zeros of the corresponding kernel polynomials
\[
{T}_n^*(\kappa,z)=(z-\kappa)^{-1}\left[{T}_{n+1}(z)-\frac{{T}_{n+1}(\kappa)}{{T}_{n}(\kappa)}{T}_n(z) \right]
\]
lie in $\dC_+$ provided that $\kappa\in\dC_+$. Furthermore, using Mathematica we can see that when $\kappa=i$ and $\kappa=1+i$ the zeros get closer to the real line when $n$ is increasing (see Figures \ref{fig1} and \ref{fig2}).
\begin{figure}[h!]
\centering
\begin{minipage}{.5\textwidth}
  \centering
   \includegraphics[scale=0.36]{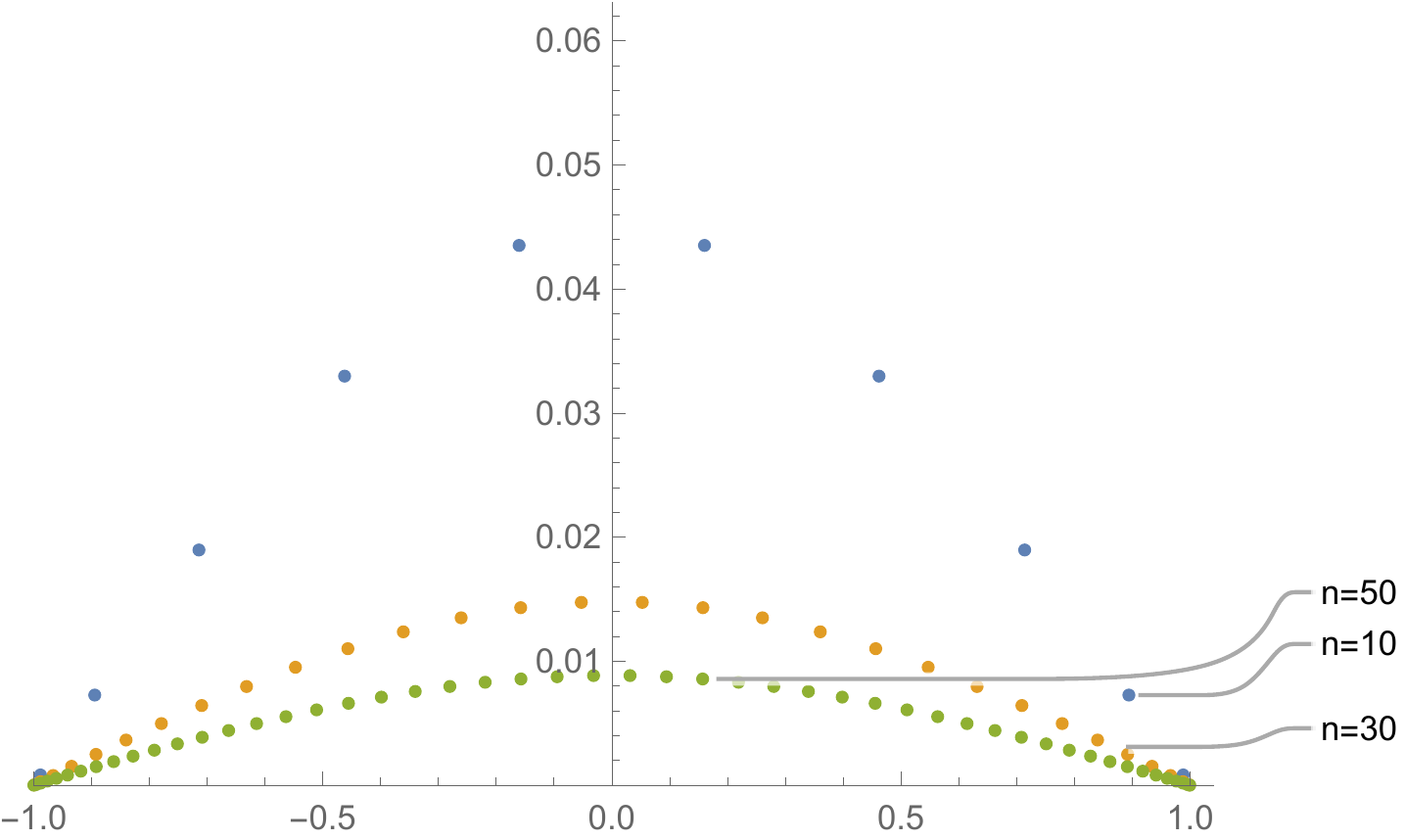}
    \caption{The behavior of the zeros of ${T}_n^*(i,z)$ when $n$ is increasing.}
    \label{fig1}
\end{minipage}%
\begin{minipage}{.5\textwidth}
  \centering
  \includegraphics[scale=0.36]{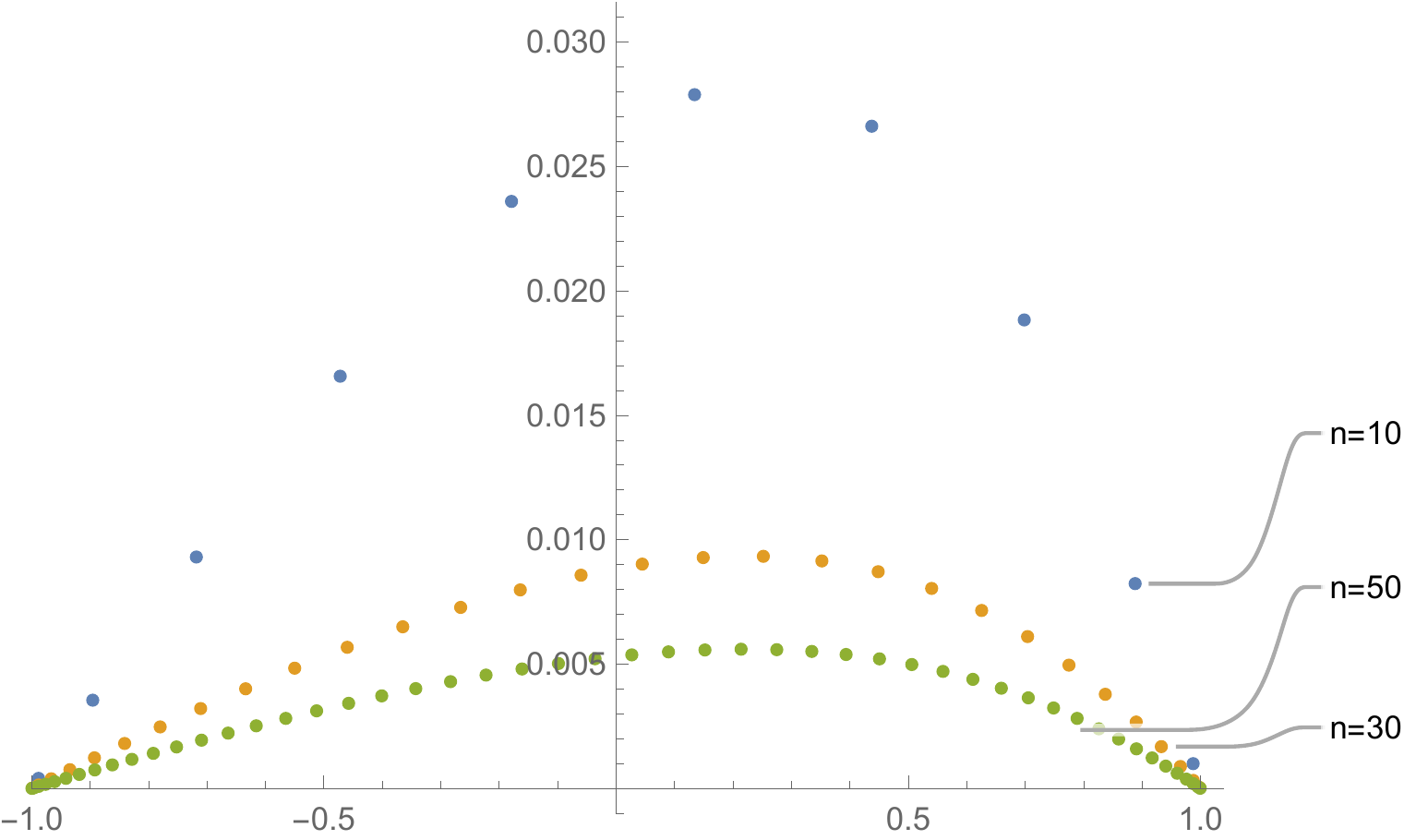}
    \caption{The behavior of the zeros of ${T}_n^*(1+i,z)$ when $n$ is increasing.}
    \label{fig2}
\end{minipage}
\end{figure}
\end{example}
It turns out that this is a typical behavior for a large class of kernel polynomials. 
\begin{theorem}
\label{thrm: n-zero behavior} Let $\{P_n(z)\}_{n=0}^\infty$ be a monic OPS with respect to a positive-definite linear functional and with kernel polynomials $\{P^*_n(\kappa,z)\}_{n=0}^\infty$. If $\{P_n(z)\}_{n=0}^\infty$ is in the Nevai class $\mathcal{N}(a,c)$, then the imaginary part of the zeros of the polynomial $P^*_n(\kappa,z)$ converges to zero as $n$ approaches infinity.
\end{theorem}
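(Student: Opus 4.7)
The plan is to introduce the auxiliary ratio
\[
g_n(z):=\frac{P_n^*(\kappa,z)}{P_n(z)}=\frac{1}{z-\kappa}\left(\frac{P_{n+1}(z)}{P_n(z)}-\frac{P_{n+1}(\kappa)}{P_n(\kappa)}\right),
\]
which is analytic on $\dC\setminus\dR$ because $P_n$ has only real zeros, and whose zeros there coincide with those of $P_n^*(\kappa,z)$ (all of which lie off the real line by Theorem \ref{thrm: zeros}). To obtain a limit, note that by Proposition \ref{prop:zeros}(iii) the family $\{P_{n+1}/P_n\}$ is uniformly bounded on compact subsets of $\dC\setminus\dR$; combined with the pointwise ratio asymptotic \eqref{def:RatioAsympF}, the Vitali–Porter theorem yields locally uniform convergence $P_{n+1}/P_n\to f$ on $\dC\setminus\dR$, and hence
\[
g_n\longrightarrow G(z):=\frac{f(z)-f(\kappa)}{z-\kappa},\qquad G(\kappa)=f'(\kappa),
\]
locally uniformly on $\dC\setminus\dR$.

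The crucial analytic fact is that $G$ has no zeros on $\dC\setminus\dR$. Indeed, $f$ is the inverse Joukowski-type map realizing a conformal bijection of $\dC\setminus[c-2a,c+2a]$ onto $\{|w|>a\}$, so $f$ is injective on the smaller domain $\dC\setminus\dR$. For $z\neq\kappa$, the equation $G(z)=0$ would force $f(z)=f(\kappa)$ and hence $z=\kappa$, a contradiction. At $\kappa$ itself a direct computation gives $f'(\kappa)=f(\kappa)/\sqrt{(\kappa-c)^2-4a^2}\neq 0$.

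To finish, suppose for contradiction that there exist $\epsilon>0$ and zeros $z_n^*$ of $P_n^*(\kappa,z)$ with $|\Im z_n^*|\geq\epsilon$ for infinitely many $n$. By Theorem \ref{thrm: zeros} these $z_n^*$ lie in a horizontal strip, and by Proposition \ref{prop:boundednessC} they are bounded, so along a subsequence they converge to some $z^*\in\dC\setminus\dR$ with $|\Im z^*|\geq\epsilon$. Since $P_{n_k}(z_{n_k}^*)\neq 0$ for large $k$, we have $g_{n_k}(z_{n_k}^*)=0$; local uniform convergence of $g_{n_k}$ in a neighborhood of $z^*$ then forces $G(z^*)=0$, contradicting the non-vanishing of $G$. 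This is the required convergence.

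The main obstacle is that the strip bound in Theorem \ref{thrm: zeros} is not sharp enough on its own: in the Nevai class it converges to the finite constant $-1/\Im(1/f(\kappa))$ rather than to zero. The decisive extra input is the injectivity of the Szeg\H{o}-type function $f$ on $\dC\setminus[c-2a,c+2a]$ together with $f'(\kappa)\neq 0$, which is precisely what rules out accumulation of zeros both at generic nonreal points and, critically, at the pole $\kappa$ itself.
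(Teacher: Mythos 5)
Your proof is correct and follows essentially the same route as the paper: locally uniform ratio asymptotics $P_{n+1}/P_n\to f$ on $\mathbb{C}\setminus\mathbb{R}$, injectivity of $f$, boundedness of the zeros via Proposition \ref{prop:boundednessC}, and a compactness/subsequence contradiction. The only real difference is cosmetic: where the paper keeps $G_n(z)=\frac{P_{n+1}(z)}{P_n(z)}-\frac{P_{n+1}(\kappa)}{P_n(\kappa)}$ and invokes Hurwitz's theorem to show its only zero near $\kappa$ is $\kappa$ itself, you divide out $(z-\kappa)$ and exclude accumulation at $\kappa$ by the explicit evaluation $G(\kappa)=f'(\kappa)\neq 0$, with the extension of the locally uniform convergence of $g_n$ across the removable singularity at $\kappa$ being the same maximum-principle (or Cauchy-integral) step the paper itself carries out in the proof of Theorem \ref{prop:pn_star_ratio}.
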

\begin{proof}
Assume $\kappa \in \mathbb{C}_+$ and let $z_{n,1}, z_{n,2}, \dots, z_{n,n}$ be the zeros of $P^*_n(\kappa, z)$. Note by Theorem $\ref{thrm: zeros}$, we have $\Im z_{n,j}>0$ for all $n=1,2,\dots$. Let $z^*_n$ be such that $\Im z^*_n =\max\{\Im z_{n,j}: z_{n,j} \text{ is the $j$-th zero of $P^*_n(\kappa,z)$}\}$ and let $G_n(z)= \frac{P_{n+1}(z)}{P_n(z)}-\frac{P_{n+1}(\kappa)}{P_n(\kappa)}$.  Since $\{P_n(z)\}_{n=0}^\infty$ is in the Nevai class $\mathcal{N}(a,c)$,  $G_n(z)$ converges uniformly to $f(z)-f(\kappa)$ where $f(z) = \frac{(z-c)+\sqrt{(z-c)^2-4a^2}}{2}$ and the square root is taken with $\sqrt{\dots}=z+O(\frac{1}{z})$ near $z=\infty$ (see \cite[Theorem 2.1]{Simon04}). Since $f(z)$ is injective in $\mathbb{C}_+$, $f(z)-f(\kappa)$ has a simple, isolated zero at $\kappa$. Let $\Delta$ be a sufficiently small neighborhood of $\kappa$ so that $f(z)-f(\kappa)\neq 0$ in $\Delta\setminus \{\kappa\}$.
By Hurwitz's Theorem, $G_n(z)$ has the same number of zeros in $\Delta$ as $f(z)-f(\kappa)$ for sufficiently large $n$ so $G_n(z)$ has only a simple zero in $\Delta$ for large $n$. Since $G_n(\kappa)=0$ for all %$\kappa$
\tc{$n=1,2,\dots$}, it must be the case that $\kappa$ is a simple zero of $G_n(z)$ for large $n$. Therefore, since  $P^*_n(\kappa, z)= \frac{1}{z-\kappa}P_n(z)G_n(z)$ and $(z-\kappa)$ divides $P_n(z)G_n(z)$, we have $P^*_n(\kappa, \kappa)\neq 0$ for large $n$. Thus, for large $n$, the zero set of $G_n(z)$ is $\{z_{n,j}\}_{j=1}^n\sqcup \{\kappa\}$ \footnote[1]{$\sqcup$ denotes the disjoint union}.\\
\indent Since the zeros $\{z^*_n\}_{n=1}^\infty$ lie in a compact set due to Proposition \ref{prop:boundednessC}, there exists a convergent subsequence $\{z^*_{n_m}\}_{m=1}^\infty$. Suppose $z^*_{n_m}\rightarrow z_0$ for some $z_0\in \mathbb{C}_+$. Then $G_{n_m}(z^*_{n_m}) \rightarrow f(z_0)-f(\kappa)$ so by the injectivity of $f$, we must have $z_0 = \kappa$. Now since $P^*_n(\kappa, z)$ has the the same zeros as $G_n(z)$ except $\kappa$, $P^*_n(\kappa,z)$ has no zeros in $\Delta$ for large $n$, contradicting the fact that $z^*_{n_m}\rightarrow \kappa$. Thus, $z_0$ must be real. Since $\Im z^*_{n_m}\rightarrow 0$ and  $\Im z^*_{n_m} =\max\{\Im z_{n_m,j}: z_{n_m,j} \text{ is the $j$-th zero of $P^*_{n_m}(\kappa,z)$}\}$ we must have that the imaginary part of the zeros of $P^*_n(\kappa,z)$ converge to 0.\\
\indent The case when $\kappa \in \mathbb{C}_-$ follows similarly.
\end{proof}
%\begin{definition}

Also, we know that in the case of Nevai class the ratio of two consecutive orthogonal polynomials converges and it turns out that this ratio asymptotic is preserved under the Christoffel transformation at $\kappa\in\mathbb{C}\setminus \mathbb{R}$.
\begin{theorem} \label{prop:pn_star_ratio}
Let $\{P_n(z)\}_{n=0}^\infty$ be a monic OPS with respect to a \tc{positive-definite linear functional and let $\{P_n^*(\kappa,z)\}_{n=0}^\infty$ be the corresponding kernel polynomials for some $\kappa\in\mathbb{C}\setminus \mathbb{R}$. 
%Let $f(z)$ be defined as in the proof of Theorem \ref{thrm: n-zero behavior}. 
If $\{P_n(z)\}_{n=0}^\infty$ is in a Nevai class then 
\[
\lim_{n\to\infty}\frac{P_{n+1}^*(\kappa, z)}{P_{n}^*(\kappa,z)}=\lim_{n\to\infty}\frac{P_{n+1}(z)}{P_n(z)}=f(z),
\]
where the ratio converges on compact subsets of $\mathbb{C}\setminus \mathbb{R}$ and $f$ is defined in \eqref{def:RatioAsympF}. }
\end{theorem}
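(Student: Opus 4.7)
The plan is to exploit the factorization
\[
P_n^*(\kappa, z) = P_n(z)\,\tilde h_n(z), \qquad \tilde h_n(z) := \frac{1}{z-\kappa}\left[\frac{P_{n+1}(z)}{P_n(z)} - \frac{P_{n+1}(\kappa)}{P_n(\kappa)}\right],
\]
which follows immediately from the defining formula for $P_n^*(\kappa,z)$; the apparent singularity of $\tilde h_n$ at $z=\kappa$ is removable since $P_n$ has no zeros off $\dR$, so $\tilde h_n$ is holomorphic on $\dC\setminus\dR$. Writing $R_n(z):=P_{n+1}(z)/P_n(z)$, the ratio in question becomes
\[
\frac{P_{n+1}^*(\kappa, z)}{P_n^*(\kappa, z)} = R_n(z)\cdot\frac{\tilde h_{n+1}(z)}{\tilde h_n(z)},
\]
and the task reduces to showing $R_n\to f$ and $\tilde h_{n+1}/\tilde h_n\to 1$ uniformly on compact subsets of $\dC\setminus\dR$.

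The convergence $R_n\to f$ uniformly on compact subsets of $\dC\setminus\dR$ is precisely the ratio asymptotic \eqref{def:RatioAsympF} for the Nevai class. For the second factor, let $h_n(z):=R_n(z)-R_n(\kappa)$, so that $h_n\to h$ uniformly on compact subsets of $\dC\setminus\dR$, where $h(z):=f(z)-f(\kappa)$ and both $h_n(\kappa)=0=h(\kappa)$. Dividing out the common zero and applying Cauchy's integral formula on a disk $\{|w-\kappa|=r\}$ contained in $\dC\setminus\dR$ gives the estimate
\[
\sup_{|z-\kappa|\le r/2}\bigl|\tilde h_n(z) - \tilde h(z)\bigr| \le \frac{2}{r}\sup_{|w-\kappa|=r}|h_n(w)-h(w)|,
\]
from which $\tilde h_n\to\tilde h$ uniformly on compact subsets of $\dC\setminus\dR$, with $\tilde h(z)=(f(z)-f(\kappa))/(z-\kappa)$ extended by $\tilde h(\kappa)=f'(\kappa)$.

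The decisive observation is that $\tilde h$ is \emph{nowhere vanishing} on $\dC\setminus\dR$. For $z\ne\kappa$ this follows from the injectivity of $f$ on $\dC\setminus[c-2a,c+2a]$ (the Joukowski-type map), and at $z=\kappa$ a direct computation with the explicit formula for $f$ shows $f'(\kappa)\ne 0$. Consequently, on any compact $K\subset\dC\setminus\dR$ one has $|\tilde h|\ge m>0$, so $\tilde h_n$ is bounded away from zero on $K$ for all large $n$, and $\tilde h_{n+1}/\tilde h_n\to 1$ uniformly on $K$. Combined with $R_n\to f$, this yields the assertion.

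The main obstacle is the point $z=\kappa$, where the naive ratio $[R_{n+1}(z)-R_{n+1}(\kappa)]/[R_n(z)-R_n(\kappa)]$ is of indeterminate form $0/0$. This is resolved by passing to $\tilde h_n$ and observing that uniform convergence of holomorphic functions survives the division by the common simple zero factor $(z-\kappa)$; the non-vanishing of $f'(\kappa)$ is then what ensures $\tilde h_n(\kappa)\to f'(\kappa)\ne 0$ and prevents zeros of $\tilde h_n$ from accumulating near $\kappa$ and destabilizing the ratio there.
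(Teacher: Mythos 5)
Your proof is correct and follows essentially the same route as the paper: you use the same factorization $P_n^*(\kappa,z)=P_n(z)\,\tilde h_n(z)$ with $\tilde h_n(z)=\frac{1}{z-\kappa}\bigl[\tfrac{P_{n+1}(z)}{P_n(z)}-\tfrac{P_{n+1}(\kappa)}{P_n(\kappa)}\bigr]$, and the same maximum-principle argument to push the uniform convergence of $\tfrac{P_{n+1}}{P_n}-\tfrac{P_{n+1}(\kappa)}{P_n(\kappa)}$ through the division by the common simple zero at $\kappa$, exactly as the paper does with its $G_n=(z-\kappa)g_n$. The one place where you genuinely deviate is the control of the denominator $\tilde h_n$: the paper guarantees that $g_{n-1}$ is zero-free on compact subsets of $\mathbb{C}\setminus\mathbb{R}$ for large $n$ by invoking Theorem \ref{thrm: n-zero behavior} (the imaginary parts of the zeros of $P_n^*(\kappa,z)$ tend to $0$), whereas you prove directly that the limit $\tilde h(z)=\frac{f(z)-f(\kappa)}{z-\kappa}$ is nowhere zero on $\mathbb{C}\setminus\mathbb{R}$, using the injectivity of $f$ on $\mathbb{C}\setminus[c-2a,c+2a]$ for $z\ne\kappa$ and $f'(\kappa)\ne 0$ (conformality) at $z=\kappa$, and then conclude $|\tilde h_n|\ge m/2>0$ on compacts for large $n$. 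This makes your argument self-contained, independent of the zero-asymptotics theorem (whose proof itself rests on Hurwitz plus the same injectivity of $f$), at the cost of a small extra verification about $f$; the paper's version instead recycles its earlier result and gets the non-vanishing for free. Both justifications are sound, and the final step — $\tilde h_{n+1}/\tilde h_n\to 1$ and $P_{n+1}/P_n\to f$ uniformly on compacts of $\mathbb{C}\setminus\mathbb{R}$ — is the same in both treatments.
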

\proof Without loss of generality, let $\kappa \in \mathbb{C}_+$ and let $G_n(z) = \frac{P_{n+1}(z)}{P_n(z)}-\frac{P_{n+1}(\kappa)}{P_n(\kappa)}$ as in the proof of Theorem \ref{thrm: zeros}. Then rewriting $\frac{P^*_{n}(\kappa,z)}{P^*_{n-1}(\kappa,z)}$, we have
\begin{equation}\label{eq:pn_star_ratio}\frac{P^*_{n}(\kappa,z)}{P^*_{n-1}(\kappa,z)} = \frac{P_n(z)}{P_{n-1}(z)}\frac{G_n(z)}{G_{n-1}(z)}.
\end{equation}
Since $G_n(z)$ has a simple zero at $z=\kappa$, we can write $G_n(z)= (z-\kappa)g_n(z)$ for a rational function $g_n(z)$.  Since $\{P_n(z)\}_{n=0}^\infty$ is in a Nevai class, $G_n(z)\rightarrow f(z)-f(\kappa)$ uniformly on compact subsets of $\mathbb{C}\setminus \mathbb{R}$ \tc{as was mentioned in the proof of Proposition \ref{prop:Nevai_Pn*}}. Now let $\epsilon>0$ and fix $\delta>0$ such that the circle $|z-\kappa|=\delta$ lies entirely in $\mathbb{C}_+$ \tc{and} let $K$ be a compact subset of $\mathbb{C}_+$. By the uniform convergence of $G_n(z)$, we know there exists $N>0$ such that  \[\left|g_n(z)-\frac{f(z)-f(\kappa)}{z-\kappa}\right|<\epsilon\] for all $n \geq N$ and for all $z \in K \cap \{z \in \mathbb{C}: |z-\kappa|\geq \delta\}$, hence $g_n(z)$ converges uniformly to $\frac{f(z)-f(\kappa)}{z-\kappa}$ on compact subsets of $\mathbb{C}_+$ that do not contain $\kappa$. In particular, $\left|g_n(z)-\frac{f(z)-f(\kappa)}{z-\kappa}\right|<\epsilon$ on $|z-\kappa|=\delta$. Since $\frac{f(z)-f(\kappa)}{z-\kappa}$  has a removable singularity at $\kappa$, $\left| g_n(z)-\frac{f(z)-f(\kappa)}{z-\kappa}\right|<\epsilon$ inside the disk $\{z\in \mathbb{C}:|z-\kappa|<\delta\}$ for all $n \geq N$ by the Maximum Principle. Therefore, $g_n(z)$ converges uniformly to $\frac{f(z)-f(\kappa)}{z-\kappa}$ on compact subsets of $\mathbb{C}_+$. Notice that $\overline{g_n(z)}=g_n(\overline{z})$ and $\overline{f(z)}= f(\overline{z})$, so  $g_n(z)$ converges uniformly to $\frac{f(z)-f(\kappa)}{z-\kappa}$ on compact subsets of $\mathbb{C}\setminus \mathbb{R}$. The same holds for $G_{n-1}(z)=(z-\kappa)g_{n-1}(z).$ \\
\indent Recall by Theorem \ref{thrm: n-zero behavior}, the zeros of $g_n(z)$ and $g_{n-1}(z)$ shrink to the real line, so re-writing equation (\ref{eq:pn_star_ratio}) as
\[
    \frac{P^*_{n}(\kappa,z)}{P^*_{n-1}(\kappa,z)} = \frac{P_n(z)}{P_{n-1}(z)}\frac{g_n(z)}{g_{n-1}(z)}
\]
we have that  $\frac{P^*_{n}(\kappa,z)}{P^*_{n-1}(\kappa,z)}$ converges uniformly to $f(z)$ on compact subsets of $\mathbb{C}\setminus \mathbb{R}$. 
\qed
\begin{remark}\label{remark:Christoffel_iterations}
\tc{The new essential part of Theorem \ref{prop:pn_star_ratio} is the observation that the ratio asymptotic is preserved under the Christoffel transformation at $\kappa\in\mathbb{C}\setminus \mathbb{R}$, which opens up a new perspective even for a real $\kappa$. To be specific, it is clear that the Chebyshev polynomials are in the Nevai class $\mathcal{N}(1/4,0)$. Moreover, one can easily establish that the ratio asymptotic holds true for the Chebyshev polynomials outside $[-1,1]$.  In fact, one can see that Theorem \ref{prop:pn_star_ratio} holds true for any $\kappa$ outside the support of the orthogonality measure for $\{P_n(z)\}_{n=0}^\infty$. Thus, the consecutive applications of Proposition \ref{prop:Nevai_Pn*} and Theorem \ref{prop:pn_star_ratio} show that a measure of the form
\[
\prod_{k=1}^N(x-\kappa_k)\frac{dx}{\sqrt{1-x^2}}, \quad \kappa_k\in\dR\setminus[-1,1], 
\]
which is an element of an orbit of $\dfrac{dx}{\sqrt{1-x^2}}$ with respect to one of the discrete dynamical systems in question, is also in the Nevai class $\mathcal{N}(1/4,0)$. Therefore, from this perspective the Denisov-Rakhmanov theorem in the case of purely absolutely continuous measure is just the limit case. This idea will be given some rigor and will be further developed in the case of complex Jacobi matrices elsewhere.} 
\end{remark}

\tc{\begin{remark} As is known, for a complex Jacobi matrix from a Nevai class, the ratio asymptotic holds true for the corresponding polynomials (see \cite{B01} and references therein). However, one must exclude a discrete set of points some of which could be in $\mathbb{C}\setminus\mathbb{R}$ and this set needs to be determined, which is not always an easy task. The fact that we have a ratio asymptotic on $\mathbb{C}\setminus\mathbb{R}$ uses the specifics of $J_m^*$. On top of that, it suggests that the Jacobi matrix corresponding to the kernel polynomials should not have non-real spectrum, which will be rigorously proved in Section 5.  
\end{remark}}

\tc{If one wants to implement the idea described in Remark \ref{remark:Christoffel_iterations} for complex Jacobi matrices, one has to understand general iterations of Christoffel transformations, in which case it is not clear for what choice of such transformations an OPS exists. Thus the first question would be to give a reasonable description of the cases when we can guarantee the existence of an OPS. Let us concentrate on the case of the second iteration and consider the iterated functional
\[
\mathcal{L}^{**}[f(z)] = \mathcal{L}[(z-\kappa_2)(z-\kappa_1)f(z)].
\]
In this case we can prove the following extension of the Christoffel theorem to the non-positive definite case} (see \cite[Chapter 2.2.7]{Ismail09} for more details about the classical Christoffel theorem).
\begin{theorem}\label{the:2ItEx}
\tc{Let $\cL$ be a positive-definite linear functional and let $d\mu$ be the corresponding measure, that is, 
\[
\cL[p]=\int_{\dR}p(x)d\mu(x).
\]
Also, let $\kappa_1\in\dC_{\pm}$ and $\kappa_2\in\dC_{\mp}$. Then the OPS $\{P^{**}_n(\kappa_1,\kappa_2, z)\}_{n=0}^{\infty}$ with respect to $\mathcal{L}^{**}$ exists. Equivalently, the complex measure
\[
(x-\kappa_1)(x-\kappa_2)d\mu(x)
\]
generates a system of monic orthogonal polynomials $P^{**}_n(\kappa_1,\kappa_2, z)$ and, thus, a monic Jacobi matrix $J^{**}_m(\kappa_1,\kappa_2)$. In particular, we have that 
\[
\Delta_n(\kappa_1,\kappa_2)=\begin{vmatrix}
P_{n+1}(\kappa_1)&P_n(\kappa_1)\\
P_{n+1}(\kappa_2)&P_n(\kappa_2)
\end{vmatrix}\ne 0, \quad n=0, 1, 2, \dots
\]
and 
\[
P^{**}_n(\kappa_1,\kappa_2, z)=\frac{1}{(z-\kappa_1)(z-\kappa_2)}
\frac{\begin{vmatrix}
P_{n+2}(\kappa_1)&P_{n+1}(\kappa_1)&P_n(\kappa_1)\\
P_{n+2}(\kappa_2)&P_{n+1}(\kappa_2)&P_n(\kappa_2)\\
P_{n+2}(z)&P_{n+1}(z)&P_n(z)
\end{vmatrix}}{\Delta_n(\kappa_1,\kappa_2)}.
\]}
%Let $\{P_n(z)\}_{n=0}^\infty$ be a monic OPS with respect to a positive-definite \tc{ linear} functional $\cL$. Also, let $\kappa_1\in\dC_{\pm}$ and $\kappa_2\in\dC_{\mp}$. Then the monic polynomial of degree $n$ given by
%\[
%\tilde{P}_n(z)=\frac{1}{(z-\kappa_1)(z-\kappa_2)}
%\frac{\begin{vmatrix}
%P_{n+2}(\kappa_1)&P_{n+1}(\kappa_1)&P_n(\kappa_1)\\
%P_{n+2}(\kappa_2)&P_{n+1}(\kappa_2)&P_n(\kappa_2)\\
%P_{n+2}(z)&P_{n+1}(z)&P_n(z)
%\end{vmatrix}}{\begin{vmatrix}
%P_{n+1}(\kappa_1)&P_n(\kappa_1)\\
%P_{n+1}(\kappa_2)&P_n(\kappa_2)
%\end{vmatrix}}
%\]
%is correctly defined for any $n\in\dZ_+=\{0,1,2,\dots\}$ and is orthogonal to $1$, $z$, \dots, $z^{n-1}$ with respect to $\mathcal{L}^{**}$.
\end{theorem}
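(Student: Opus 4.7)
The plan is to prove the theorem in three stages: first, establish the nondegeneracy $\Delta_n(\kappa_1,\kappa_2)\neq 0$ for every $n\geq 0$; second, verify that the right-hand side of the displayed formula is in fact a monic polynomial of degree $n$ which is $\mathcal{L}^{**}$-orthogonal to every polynomial of lower degree; third, confirm that $\mathcal{L}^{**}[(P^{**}_n)^2]\neq 0$, whence $\mathcal{L}^{**}$ is quasi-definite and the monic OPS exists, necessarily coinciding with the explicit formula by uniqueness.

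For the first stage, since $\mathcal{L}$ is positive-definite, the zeros of each $P_n$ are real, so $P_n(\kappa_j)\neq 0$ for $j=1,2$. Combining the three-term recurrence \eqref{def:rec2} with Proposition \ref{prop:zeros} gives
\[
\Im\!\left(\frac{P_{n+1}(z)}{P_n(z)}\right) = \Im z - \lambda_{n+1}\,\Im\!\left(\frac{P_{n-1}(z)}{P_n(z)}\right),
\]
which is strictly positive on $\mathbb{C}_+$ and strictly negative on $\mathbb{C}_-$. Since $\kappa_1$ and $\kappa_2$ lie in opposite half-planes, the ratios $P_{n+1}(\kappa_j)/P_n(\kappa_j)$ cannot coincide, which is precisely the assertion that $\Delta_n(\kappa_1,\kappa_2)\neq 0$.

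For the second stage, I would expand the $3\times 3$ determinant along its bottom row, obtaining
\[
\mathrm{Num}(z) = \Delta_n(\kappa_1,\kappa_2)\,P_{n+2}(z) - \widetilde{\Delta}_n\,P_{n+1}(z) + \Delta_{n+1}(\kappa_1,\kappa_2)\,P_n(z),
\]
where $\widetilde{\Delta}_n$ is the remaining $2\times 2$ minor. When $z=\kappa_1$ or $z=\kappa_2$ two rows of the original matrix coincide, so $\mathrm{Num}$ vanishes there and the quotient by $(z-\kappa_1)(z-\kappa_2)$ is a genuine polynomial; comparison of leading coefficients shows that after division by $\Delta_n(\kappa_1,\kappa_2)$ it is monic of degree $n$. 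For any polynomial $q$ of degree strictly less than $n$,
\[
\mathcal{L}^{**}[P^{**}_n\,q] = \frac{1}{\Delta_n(\kappa_1,\kappa_2)}\,\mathcal{L}[\mathrm{Num}(z)\,q(z)] = 0,
\]
since $\mathcal{L}[P_k\,q]=0$ for each $k\in\{n,n+1,n+2\}$.

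For the third stage, monicity of $P^{**}_n$ together with the orthogonality just established gives
\[
\mathcal{L}^{**}[(P^{**}_n)^2] = \mathcal{L}^{**}[z^n\,P^{**}_n(\kappa_1,\kappa_2,z)] = \frac{\Delta_{n+1}(\kappa_1,\kappa_2)}{\Delta_n(\kappa_1,\kappa_2)}\,\mathcal{L}[P_n^2(z)],
\]
after recognizing the minor multiplying $P_n(z)$ as $\Delta_{n+1}(\kappa_1,\kappa_2)$ and using $\mathcal{L}[z^nP_k]=0$ for $k>n$ together with $\mathcal{L}[z^nP_n]=\mathcal{L}[P_n^2]$. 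This is nonzero by stage one and positive-definiteness of $\mathcal{L}$, yielding existence of the OPS. The main subtlety is the strict imaginary-part inequality for $P_{n+1}/P_n$ in stage one; Proposition \ref{prop:zeros} only asserts such a sign for $P_{n-1}/P_n$, so one must pass through the recurrence, but this is a short step. Everything else amounts to the standard Christoffel-type determinantal bookkeeping, now carried out without positivity of $\mathcal{L}^{**}$.
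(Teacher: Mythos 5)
Your proof is correct, but it follows a different route from the paper's. Both arguments rest on the same key nondegeneracy fact: by Proposition~\ref{prop:zeros}, the ratios of consecutive polynomials at $\kappa_1\in\dC_\pm$ and $\kappa_2\in\dC_\mp$ have imaginary parts of strictly opposite signs, hence $\Delta_n(\kappa_1,\kappa_2)\ne0$. (The paper factors $\Delta_n=P_{n+1}(\kappa_1)P_{n+1}(\kappa_2)\bigl[\tfrac{P_n(\kappa_2)}{P_{n+1}(\kappa_2)}-\tfrac{P_n(\kappa_1)}{P_{n+1}(\kappa_1)}\bigr]$ and quotes Proposition~\ref{prop:zeros} verbatim; your detour through the recurrence to get the sign of $\Im\bigl(P_{n+1}/P_n\bigr)$ is fine but unnecessary, since $\Im(1/w)$ and $\Im w$ have opposite signs, so factoring out $P_{n+1}(\kappa_1)P_{n+1}(\kappa_2)$ rather than $P_n(\kappa_1)P_n(\kappa_2)$ avoids the issue.) From there the paths diverge: the paper realizes $P^{**}_n$ as a kernel polynomial of a kernel polynomial, i.e.\ it applies the Christoffel construction at $\kappa_2$ to the already transformed family $P^*_n(\kappa_1,\cdot)$, notes that $\Delta_n\ne0$ makes the second step well defined so that ``the orthogonality is immediate,'' and then spends the remainder of the proof on the algebra converting the iterated formula \eqref{def:Pn**} into the $3\times3$ determinant. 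You instead take the determinant as the definition, expand along the bottom row to get $\Delta_nP_{n+2}-\widetilde\Delta_nP_{n+1}+\Delta_{n+1}P_n$, and verify monicity, orthogonality to lower degrees, and $\mathcal{L}^{**}[(P^{**}_n)^2]=\tfrac{\Delta_{n+1}}{\Delta_n}\,\mathcal{L}[P_n^2]\ne0$ by hand. Your version is more self-contained — it does not lean on the kernel-polynomial theorem for a merely quasi-definite functional, and it makes the quasi-definiteness of $\mathcal{L}^{**}$ explicit via the nonvanishing norms, a point the paper leaves implicit — while the paper's version exhibits the structural fact, used elsewhere in the text, that the two-point transformation is literally a composition of two single-point Christoffel steps.
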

\begin{proof} \tc{To begin with, note that $P_n(z)$ and $P_{n+1}(z)$ have only real zeros. %for $z=\kappa_1$ and $z=\kappa_2$ we have
%\[
%\begin{vmatrix}
%P_{n+2}(\kappa_1)&P_{n+1}(\kappa_1)&P_n(\kappa_1)\\
%P_{n+2}(\kappa_2)&P_{n+1}(\kappa_2)&P_n(\kappa_2)\\
%P_{n+2}(z)&P_{n+1}(z)&P_n(z)
%\end{vmatrix}=0,
%\]
%and, thus, $\tilde{P}_n$ is a polynomial of degree $n$ provided that
%\[
%\begin{vmatrix}
%P_{n+1}(\kappa_1)&P_n(\kappa_1)\\
%P_{n+1}(\kappa_2)&P_n(\kappa_2)
%\end{vmatrix}=P_{n+1}(\kappa_1)P_n(\kappa_2)-P_{n+1}(\kappa_2)P_n(\kappa_1%)\ne0,
%\]
Therefore, we have
\[
\Delta_n(\kappa_1,\kappa_2)=P_{n+1}(\kappa_1)P_{n+1}(\kappa_2)
\left[\frac{P_n(\kappa_2)}{P_{n+1}(\kappa_2)}-\frac{P_n(\kappa_1)}{P_{n+1}(\kappa_1)}\right],
\]
which is not zero due to Proposition \ref{prop:zeros}. 
 We thus see that the polynomials
\begin{equation}\label{def:Pn**}
P^{**}_n(\kappa_1,\kappa_2, z) = (z-\kappa_2)^{-1}\left[P^*_{n+1}(\kappa_1, z) - \frac{P^*_{n+1}(\kappa_1,\kappa_2)}{P^*_n(\kappa_1,\kappa_2)}P^*_n(\kappa_1,z) \right]
\end{equation}
are correctly defined for any nonnegative integer $n$. As a result, the orthogonality is immediate. 
Then 
\begin{multline}
    P^{**}_n(\kappa_1,\kappa_2, z) = (z-\kappa_2)^{-1}\left[P^*_{n+1}(\kappa_1, z) - \frac{P^*_{n+1}(\kappa_1,\kappa_2)}{P^*_n(\kappa_1,\kappa_2)}P^*_n(\kappa_1,z) \right]\\
    =\frac{1}{z-\kappa_2}\Big[\frac{1}{z-\kappa_1}\left(P_{n+2}(z)-\frac{P_{n+2}(\kappa_1)}{P_{n+1}(\kappa_1)}P_{n+1}(z) \right)-\\
    \frac{P^*_{n+1}(\kappa_1, \kappa_2)}{P^*_n(\kappa_1, \kappa_2)} \frac{1}{z-\kappa_1}\left(P_{n+1}(z)-\frac{P_{n+1}(\kappa_1)}{P_n(\kappa_1)}P_n(z) \right)\Big]\\
    =\frac{1}{(z-\kappa_1)(z-\kappa_2)}\Big[P_{n+2}(z)-\left(\frac{P_{n+2}(\kappa_1)P_n(\kappa_2)-P_{n+2}(\kappa_2)P_n(\kappa_1)}{P_{n+1}(\kappa_1)P_n(\kappa_2)-P_{n+1}(\kappa_2)P_n(\kappa_1)}\right)P_{n+1}(z)+\\
    +\left(\frac{P_{n+2}(\kappa_1)P_{n+1}(\kappa_2)-P_{n+2}(\kappa_2)P_{n+1}(\kappa_1)}{P_{n+1}(\kappa_1)P_n(\kappa_2)-P_{n+1}(\kappa_2)P_n(\kappa_1)}
    \right)P_n(z)\Big]
    \end{multline}
and the expression in the square brackets can be recast as the determinant divided by $\Delta_n(\kappa_1,\kappa_2)$.}
  %=\frac{1}{(z-\kappa_1)(z-\kappa_2)}
%\frac{\begin{vmatrix}
%P_{n+2}(\kappa_1)&P_{n+1}(\kappa_1)&P_n(\kappa_1)\\
%P_{n+2}(\kappa_2)&P_{n+1}(\kappa_2)&P_n(\kappa_2)\\
%P_{n+2}(z)&P_{n+1}(z)&P_n(z)
%\end{vmatrix}}{\begin{vmatrix}
%P_{n+1}(\kappa_1)&P_n(\kappa_1)\\
%P_{n+1}(\kappa_2)&P_n(\kappa_2)
%\end{vmatrix}}
\end{proof} 
\begin{remark}
\tc{If $\kappa_1=\overline{\kappa}_2\in\dC\setminus\dR$, we have that $(x-\kappa_1)(x-\kappa_2)>0$ for all $x\in\dR$ and thus the statement of the theorem reduces to the classical Christoffel theorem. However, when $\kappa_1\ne\overline{\kappa}_2$, the quadratic polynomial $(x-\kappa_1)(x-\kappa_2)$ is not positive on the real line and the fact that the resulting functional for the specified choice of $\kappa$'s is quasi-definite is new.}
\end{remark}

\tc{Since the boundedness is preserved under the Christoffel transformation provided we choose the points appropriately, starting with a bounded monic Jacobi matrix $J_m$ one can pick $\kappa_3$ to be outside the numerical range of $J^{**}_m(\kappa_1,\kappa_2)$ and so on. The latter is not easy to find explicitly in the general situation and so it would be nice to find a generalization of Theorem \ref{the:2ItEx} for 3 and more points, which would provide us with a universal way of picking the points for consecutive iterations.}

\section{Geronimus transformation}

In this section we will consider a transformation that is inverse to the Christoffel transformation at $\kappa\in\dC_{\pm}$ \tc{and we will mostly follow the same scheme we implemented in the previous section}. Let $\mathcal{L}$ be a complex-valued linear functional. Define its {\it Geronimus transformation} at $\kappa$ as a linear functional $\mathcal{L}^{-*}$ whose Christoffel transformation at $\kappa$ is $\mathcal{L}$, that is, $(\mathcal{L}^{-*})^*=\mathcal{L}$. More precisely, the definition reads \tc{as follows}
\[
%(\mathcal{L}^{-*})^*(p(z))=
\mathcal{L}^{-*}((z-\kappa)p(z))=\mathcal{L}(p(z)),
\]
where $p(z)$ is any polynomial. It is not so hard to see from the above relation that for any polynomial $p(z)$ we have that
\[
\mathcal{L}^{-*}(p(z))=
\mathcal{L}\left(\dfrac{p(z)-p(\kappa)}{z-\kappa}\right)+p(\kappa)\mathcal{L}^{-*}(1),
\]
where $\mathcal{L}^{-*}(1)$ is not uniquely determined by the definition and therefore it can be an arbitrary constant.

From the point of view of orthogonality, it is sometimes more convenient to have forms (for instance, see \cite{DM14}) and given a linear functional $\mathcal{L}$ one can actually define a bilinear form. More precisely, for two polynomials $p(z)$ and $q(z)$ we define 
\[ 
(p,q)_0 =\mathcal{L}(p(z)q(z)). %\int_\mathbb{R} f(t)g(t) d\mu(t)
\]
In \tc{the same way}, $\mathcal{L}^{-*}$ generates the bilinear form $[\cdot, \cdot]_1$ that satisfies 
\[
[(t-\kappa)p, q]_1=[p, (t-\kappa)q]_1=(p,q)_0 
\] \tc{\text{ for the real variable }t.}

In case the given \tc{ linear} functional has an explicit representation of the form
\begin{equation}\label{functional:IntRepr}
 \mathcal{L}(p(t))=\int_a^b p(t)\,d\mu(t),   
\end{equation}
where $d\mu(t)$ is a positive measure, whose support is contained in the interval $[a,b] \tc{\subset \mathbb{R}}$, one can also be more specific about its Geronimus transformation. 
\begin{proposition}
Let $\mathcal{L}$ be of the form \eqref{functional:IntRepr} and let $\kappa\in\dC_{\pm}$. Then the Geronimus transformation $\mathcal{L}^{-*}$ of $\mathcal{L}$ at $\kappa$ corresponds to the bilinear form $[\cdot,\cdot]_1$ that admits the representation
\begin{equation}\label{eq:bilinear}
 [p,q]_1=\int_a^b p(t)q(t)\frac{d\mu(t)}{t-\kappa}+\left(s_0^*-\int_a^b\frac{d\mu(t)}{t-\kappa}\right)p(\kappa)q(\kappa),\quad p,q\in \mathbb{C}[z],
\end{equation}
where $s_0^*$ is an arbitrary complex number.
\end{proposition}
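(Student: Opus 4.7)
The plan is to directly verify the integral representation by applying the identity
\[
\mathcal{L}^{-*}(r(t))=\mathcal{L}\left(\frac{r(t)-r(\kappa)}{t-\kappa}\right)+r(\kappa)\mathcal{L}^{-*}(1),
\]
which was established immediately after the definition of the Geronimus transformation, to the polynomial $r(t)=p(t)q(t)$. Since the bilinear form $[\cdot,\cdot]_1$ associated with $\mathcal{L}^{-*}$ is by construction given by $[p,q]_1=\mathcal{L}^{-*}(p(t)q(t))$, the target formula \eqref{eq:bilinear} will follow after splitting the resulting $\mathcal{L}$-integral into two pieces and identifying the free constant.

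First I would check that all the quantities appearing in \eqref{eq:bilinear} are well defined. Because $\kappa\in\dC_\pm\subset\dC\setminus\dR$ while $\supp d\mu\subset[a,b]\subset\dR$, the function $(t-\kappa)^{-1}$ is continuous and bounded on $[a,b]$, so $\int_a^b(t-\kappa)^{-1}d\mu(t)$ converges to a finite complex number, and $\int_a^b p(t)q(t)(t-\kappa)^{-1}d\mu(t)$ does as well for any $p,q\in\dC[z]$. Moreover, the expression $(t-\kappa)^{-1}[p(t)q(t)-p(\kappa)q(\kappa)]$ is a genuine polynomial in $t$ of degree $\deg p+\deg q-1$, so applying $\mathcal{L}$ to it through the integral representation \eqref{functional:IntRepr} is unambiguous. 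With these preliminaries in hand, the computation reduces to
\[
[p,q]_1=\int_a^b\frac{p(t)q(t)-p(\kappa)q(\kappa)}{t-\kappa}\,d\mu(t)+p(\kappa)q(\kappa)\mathcal{L}^{-*}(1),
\]
and splitting off the $p(\kappa)q(\kappa)$ contribution from the integral yields exactly \eqref{eq:bilinear} upon setting $s_0^*:=\mathcal{L}^{-*}(1)$. The arbitrariness of $s_0^*$ in $\dC$ is precisely the remark recorded just after the definition of $\mathcal{L}^{-*}$, namely that $\mathcal{L}^{-*}(1)$ is not fixed by the defining relation $\mathcal{L}^{-*}((t-\kappa)p(t))=\mathcal{L}(p(t))$.

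The main obstacle here is genuinely minor: the content of the proposition is the translation of the pointwise formula for $\mathcal{L}^{-*}$ into an integral against the complex-valued measure $(t-\kappa)^{-1}d\mu(t)$, and once the integrability issues are handled by $\kappa\notin\dR$, the rest is algebraic rearrangement. As an independent sanity check I would substitute $(t-\kappa)p(t)$ for $p(t)$ in the right-hand side of \eqref{eq:bilinear}: the $p(\kappa)q(\kappa)$ boundary term drops out because of the factor $(\kappa-\kappa)=0$, and one recovers $[(t-\kappa)p,q]_1=\int_a^b p(t)q(t)\,d\mu(t)=(p,q)_0$, confirming that the form written in \eqref{eq:bilinear} really is the one associated with the Geronimus transformation of $\mathcal{L}$ at $\kappa$.
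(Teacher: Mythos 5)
Your proof is correct and is essentially the paper's own argument written out in full: the paper merely cites Proposition 2.2 of \cite{DM14} with the substitution $t\to t-\kappa$, and your direct verification---applying the identity $\mathcal{L}^{-*}(r)=\mathcal{L}\bigl(\frac{r(t)-r(\kappa)}{t-\kappa}\bigr)+r(\kappa)\mathcal{L}^{-*}(1)$ to $r=pq$, using the representation \eqref{functional:IntRepr}, and splitting off the $p(\kappa)q(\kappa)$ term, which is legitimate since $\kappa\notin\mathbb{R}$ makes $(t-\kappa)^{-1}$ bounded on $[a,b]$---is exactly that computation, with the closing consistency check $[(t-\kappa)p,q]_1=(p,q)_0$ a nice bonus.
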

\begin{proof}
\tc{The proof follows like that of Proposition 2.2 in \cite{DM14} with the substitution $t \rightarrow t-\kappa$, but note that $t-\kappa$ is no longer real}.
  %As was already noted, the value
  %$[1,1]_1=\mathcal{L}^{-*}(1)$ can be arbitrary. So, let us denote it by $s_0^*$, i.e. $s_0^*=[1,1]_1$.
%Further, let us compute $[p,q]_1$ for any $p,q\in \mathbb{C}[z]$:
%\[
%\begin{split}
%[p,q]_1=&[p(t)-p(\kappa)+p(\kappa),q(t)]_1=[p(t)-p(\kappa),q(t)]_1+[p(\kappa),q(t)]_1\\
%=&[p(t)-p(\kappa),q(t)]_1+[p(\kappa),q(t)-q(\kappa)]_1+[p(\kappa),q(\kappa)]_1\\
%=&\left(\frac{p(t)-p(\kappa)}{t-\kappa},q(t)\right)_0+\left(p(\kappa),\frac{q(t)-q(\kappa)}{t-\kappa}\right)_0+p(\kappa)q(\kappa)s_0^*\\
%=&\int_a^b\frac{p(t)-p(\kappa)}{t-\kappa}q(t)d\mu(t)+\int_a^bp(\kappa)\frac{q(t)-q(\kappa)}{t-\kappa}d\mu(t)+p(\kappa)q(\kappa)s_0^*.
%\end{split}
%\]
%Thus, we arrive at
%\[
%[p,q]_1= \int_a^b\left(p(t)-p(\kappa)\right)q(t)\frac{d\mu(t)}{t-\kappa}+\int_a^bp(\kappa)\left(q(t)-q(\kappa)\right)\frac{d\mu(t)}{t-\kappa}+p(\kappa)q(\kappa)s_0^*,
%\]
%which can be easily simplified to (\ref{eq:bilinear}).
\end{proof}

\tc{It is important to note that although} $s_0^*$ can be an arbitrary complex number, not all numbers lead to OPSs. For example, if we set $s_0^*=0$ we get that
\[
\mathcal{L}^{-*}(1)=[1,1]_1=0,
\]
which shows that the corresponding OPS does not exist \tc{and so the case when $s_0^*=0$ should be excluded from our considerations.}

\tc{In order to define a sequence of monic polynomials orthogonal with respect to  $\mathcal{L}^{-*}$, we need to introduce new polynomials $R_n(z)$. Let $\{P_n(z)\}_{n=0}^{\infty}$ be an OPS with respect to a quasi-definite linear $\mathcal{L}$ and let $\{Q_n(z)\}_{n=0}^{\infty}$ be defined by \eqref{def:Qn}. Let $R_n(z)$ be the polynomial of degree $n$ given by
\begin{equation}\label{def:Rn}
R_n(z) = P_n(z)+\frac{1}{s^*_0}Q_n(z)
\end{equation} 
for  $s^*_0 \in \mathbb{R}\setminus \{0\}$. Obviously, $y_n=R_n(z)$ verifies the same difference equation \eqref{DiffEqGeneral} but with a different set of initial data
\[
R_0(z)=1, \quad R_1(z)=z-c_1+\frac{1}{s^*_0}.
\]
Note that if $\mathcal{L}$ is positive-definite then by Favard's theorem $\{R_n(z)\}_{n=0}^\infty$ is an OPS with respect to some positive-definite linear functional as well. Also, if $\{P_n(z)\}_{n=0}^\infty$ is in the Nevai class $\mathcal{N}(a,c)$, then so is $\{R_n(z)\}_{n=0}^\infty$.}

\tc{
Next, one can easily verify that the monic polynomial
\[
P_n^{-*}(\kappa, z)=P_n(z)+A_n P_{n-1}(z),
\]
where 
\[
 A_n=-\frac{s_0^*P_{n}(\kappa)+Q_{n}(\kappa)}{s_0^*P_{n-1}(\kappa)+Q_{n-1}(\kappa)}=-\frac{R_n(\kappa)}{R_{n-1}(\kappa)}
\]
is orthogonal to the monomials $1$, $z$. \dots, $z^{n-1}$ with respect to $\mathcal{L}^{-*}$ provided that $s_0^*P_{n-1}(\kappa)+Q_{n-1}(\kappa)\ne 0$ (for details see \cite{DM14} or \cite{Ger40}).}

The next statement guaranties that under certain conditions the functional $\mathcal{L}^{-*}$ is regular or, which is the same, quasi-definite.\\

\begin{theorem}\label{thm:RegGer}
Let $\mathcal{L}$ be \tc{ a positive-definite linear functional} and let $\{P_n(z)\}_{n=0}^\infty$ be the corresponding monic OPS. \tc{Also, let $\{Q_n(z)\}_{n=0}^{\infty}$ be the polynomials defined by \eqref{def:Qn}.} If $\kappa\in\dC_{\pm}$ and $s_0^*=\mathcal{L}^{-*}(1)\in\overline{\dC}_{\mp}\setminus\{0\}$ then $s_0^*P_{n-1}(\kappa)+Q_{n-1}(\kappa)\ne 0$ for $n=1,2,3,\dots$ 
%and the sequence of monic polynomials
%\[
%P_n^{-*}(\kappa, z)=P_n(z)+A_n P_{n-1}(z),
%\]
%where 
%\[
% A_n=-\frac{s_0^*P_{n}(\kappa)+Q_{n}(\kappa)}{s_0^*P_{n-1}(\kappa)+Q_{n-1}(\kappa)}=-\frac{R_n(\kappa)}{R_{n-1}(\kappa)},
%\]
\tc{ and thus the corresponding Geronimus transformation $\mathcal{L}^{-*}$ at $\kappa$ is a quasi-definite functional}.
\end{theorem}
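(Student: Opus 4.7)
The plan is to reformulate the non-vanishing $s_0^*P_{n-1}(\kappa)+Q_{n-1}(\kappa)\ne 0$ as a statement that a certain ratio avoids the closed half-plane in which $s_0^*$ is allowed to live, and then exploit a Herglotz-type property of $Q_n/P_n$. The case $n=1$ is immediate, since $P_0\equiv 1$ and $Q_0\equiv 0$ reduce the expression to $s_0^*\ne 0$. For $n\ge 2$, Proposition \ref{prop:zeros} (coupled with the fact that $P_{n-1}$ has only real zeros) gives $P_{n-1}(\kappa)\ne 0$ for $\kappa\in\dC_\pm$, so the putative equality $s_0^*P_{n-1}(\kappa)+Q_{n-1}(\kappa)=0$ is equivalent to
\[
s_0^*=-\frac{Q_{n-1}(\kappa)}{P_{n-1}(\kappa)}.
\]

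The core of the argument is to show that $Q_{n-1}(\kappa)/P_{n-1}(\kappa)\in\dC_\mp$ whenever $\kappa\in\dC_\pm$. Since the zeros $x_{n-1,j}$ of $P_{n-1}$ are real and simple, the ratio admits the partial fraction expansion
\[
\frac{Q_{n-1}(z)}{P_{n-1}(z)}=\sum_{j=1}^{n-1}\frac{\beta_{n-1,j}}{z-x_{n-1,j}},\qquad \beta_{n-1,j}=\frac{Q_{n-1}(x_{n-1,j})}{P_{n-1}'(x_{n-1,j})}.
\]
To establish that the residues are strictly positive, I will use the Casoratian identity
\[
P_{n-2}(z)Q_{n-1}(z)-P_{n-1}(z)Q_{n-2}(z)=\lambda_2\lambda_3\cdots\lambda_{n-1},
\]
which follows by a short induction from the common three-term recurrence satisfied by $\{P_n\}$ and $\{Q_n\}$. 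Evaluated at $z=x_{n-1,j}$ it yields $Q_{n-1}(x_{n-1,j})P_{n-2}(x_{n-1,j})=\lambda_2\cdots\lambda_{n-1}>0$, while the strict interlacing of the zeros of $P_{n-1}$ and $P_{n-2}$ forces $P_{n-2}(x_{n-1,j})$ and $P_{n-1}'(x_{n-1,j})$ to share the same sign $(-1)^{n-1-j}$. Hence $\beta_{n-1,j}>0$, and
\[
\Im\!\left(\frac{Q_{n-1}(\kappa)}{P_{n-1}(\kappa)}\right)=-\Im(\kappa)\sum_{j=1}^{n-1}\frac{\beta_{n-1,j}}{|\kappa-x_{n-1,j}|^2}
\]
carries the strict opposite sign to $\Im\kappa$.

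Combining these pieces, if $\kappa\in\dC_+$ then $-Q_{n-1}(\kappa)/P_{n-1}(\kappa)\in\dC_+$ strictly, so it cannot coincide with any $s_0^*\in\overline{\dC}_-\setminus\{0\}$; the case $\kappa\in\dC_-$ is handled by complex conjugation. This yields the non-vanishing for all $n\ge 1$, and the quasi-definiteness of $\mathcal{L}^{-*}$ follows at once, since the displayed non-vanishing is precisely the condition needed to define the monic orthogonal polynomials $P_n^{-*}(\kappa,z)=P_n(z)+A_n P_{n-1}(z)$ with respect to $\mathcal{L}^{-*}$ at every level. The step that will require the most care is the verification of the positivity of the residues $\beta_{n-1,j}$; once that is secured, the half-plane incompatibility is immediate, and I do not anticipate any further technical obstacle.
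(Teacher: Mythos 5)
Your proof is correct and follows essentially the same route as the paper: both arguments reduce the claim to showing $\Im\kappa\cdot\Im\bigl(Q_{n-1}(\kappa)/P_{n-1}(\kappa)\bigr)<0$ via a partial-fraction expansion with positive residues, and then note that $-Q_{n-1}(\kappa)/P_{n-1}(\kappa)$ lies strictly inside the half-plane forbidden to $s_0^*\in\overline{\dC}_{\mp}\setminus\{0\}$. The only difference is in how the residue positivity is justified: the paper invokes the known interlacing of the zeros of $Q_{n-1}$ and $P_{n-1}$ and argues as in Proposition \ref{prop:zeros}, whereas you derive it self-containedly from the Casoratian identity $P_{n-2}Q_{n-1}-P_{n-1}Q_{n-2}=\lambda_2\cdots\lambda_{n-1}$ together with the interlacing of the zeros of $P_{n-1}$ and $P_{n-2}$.
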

\begin{proof}
Since the zeros of $Q_{n-1}(z)$ and $P_{n-1}(z)$ interlace, similarly to what was done in the proof of Proposition \ref{prop:zeros} we can conclude that 
\begin{equation}\label{eq:Qn_Pn}
\Im \kappa\Im\left(\dfrac{Q_{n-1}(\kappa)}{P_{n-1}(\kappa)}\right)<0.
\end{equation} Without loss of generality let $\kappa \in \mathbb{C}_+$ and $s^*_0\in \mathbb{C}_-$. Then \eqref{eq:Qn_Pn} implies that $\Im \left(s^*_0+\frac{Q_{n-1}(\kappa)}{P_{n-1}(\kappa)} \right)<0$ hence  $s_0^*P_{n-1}(\kappa)+Q_{n-1}(\kappa)\ne 0$. 
\iffalse Next, we see that
 \[
  \mathcal{L}(P_n^{-*}(\kappa,z)z^k)=0,\quad k=0,1,2,\dots, n-2,
 \]
which yields
\[
\mathcal{L}^{-*}(P_n^{-*}(\kappa,z)(z-\kappa)z^k)=0, \quad k=0,1,2,\dots, n-2,
\]
or, equivalently, 
\[
\mathcal{L}^{-*}(P_n^{-*}(\kappa,z)z^{k+1})=\kappa\mathcal{L}^{-*}(P_n^{-*}(\kappa,z)z^k) \quad k=0,1,2,\dots, n-2.
\]
The latter implies the orthogonality provided that $\mathcal{L}^{-*}(P_n^{-*}(\kappa,z))=0$. Thus, it remains to check that $\mathcal{L}^{-*}(P_n^{-*}(\kappa,z))=0$. Indeed, we have that
\[
 \begin{split}
  \mathcal{L}^{-*}(P_n^{-*}(\kappa,z))&=s_0^*P_n^{-*}(\kappa, \kappa)+\mathcal{L}\left(\frac{P_n^{-*}(\kappa,z)-P_n^{-*}(\kappa,\kappa)}{z-\kappa}\right)=\\
  =&s_0^*P_n^{-*}(\kappa,\kappa)+\mathcal{L}\left(\frac{P_n(z)-P_n(\kappa)}{z-\kappa}\right)+A_n\mathcal{L}\left(\frac{P_{n-1}(z)-P_{n-1}(\kappa)}{z-\kappa}\right)\\
  =&s_0^*(P_n(\kappa)+A_nP_{n-1}(\kappa))+Q_n(\kappa)+A_nQ_{n-1}(\kappa)\\
  =&s_0^*P_n(\kappa)+Q_n(\kappa)+A_n(s_0^*P_{n-1}(\kappa)+Q_{n-1}(\kappa))\\
  =& 0
 \end{split}
\] for $n=1, 2, 3, \dots$. 
\fi
\end{proof}

We are going to also refer to the polynomials $P_n^{-*}(\kappa,z)$'s as the Geronimus transformation of the polynomials $P_n(z)$'s at $\kappa$.
\iffalse\begin{remark} It is clear from the proof of Theorem \ref{thm:RegGer} that if $s_0$, $\kappa$, and $\mathcal{L}$ are such that $s_0^*P_{n-1}(\kappa)+Q_{n-1}(\kappa)\ne 0$ for $n=1,2,3,\dots$, then the polynomials given in Theorem \ref{thm:RegGer} will still be orthogonal with respect to the corresponding $\mathcal{L}^{-*}$. One can also rewrite the polynomials $P_n^{-*}(\kappa,z)$ in the determinant form 
\begin{equation}\label{SG_hh1}
 P_n^{-*}(\kappa,z)=\frac{1}{R_{n-1}(\kappa)}
 \begin{vmatrix}
 R_{n-1}(\kappa) &R_n(\kappa)\\
 P_{n-1}(z) &P_n(z)
 \end{vmatrix},
\end{equation}
where $R_n(z)$ is defined by equation $(\ref{def:Rn})$.
\end{remark}
\fi

\tc{One of the consequences of Theorem \ref{thm:RegGer} is that the resulting polynomials $\{P_n^{-*}(\kappa,z)\}_{n=0}^\infty$ corresponding to Geronimous transformation satisfies 
\begin{equation}\label{eq:Pn_inverse_Recursive}
zP^{-*}_n(\kappa,z)=P^{-*}_{n+1}(\kappa, z)+c^{-*}_{n+1}P^{-*}_n(\kappa,z)+\lambda_{n+1}^{-*}P^{-*}_{n-1}(\kappa, z). 
\end{equation} Thus, 
 we can see that $\mathcal{L}^{-*}$ corresponds to the following monic Jacobi matrix:
\[
J^{-*}_m=\begin{pmatrix}
c^{-*}_1 & 1 & 0 & \cdots   \\
     \lambda^{-*}_2 & c^{-*}_2 & 1 &   \\
     0 & \lambda^{-*}_3 & c^{-*}_3 & 1 \ddots  \\
     \vdots & & \ddots & \ddots
\end{pmatrix}
\]
 where \begin{equation}\label{lAndcFormulasGT}
\lambda^{-*}_{n+1} = \lambda_n \frac{R_n(\kappa)R_{n-2}(\kappa)}{R^2_{n-1}(\kappa)} ,\quad c^{-*}_{n+1}=c_{n+1}-\frac{R_n(\kappa)}{R_{n-1}(\kappa)}+\frac{R_{n+1}(\kappa)}{R_n(\kappa)}.
\end{equation}
}

\tc{Under the proper conditions the boundedness of the Jacobi matrix is preserved under the Geronimus transformation.}

\begin{proposition}\label{UboundP}
\tc{Let $\{P_n(z)\}_{n=0}^\infty$ be a monic OPS with respect to a positive-definite linear functional and let $J_m$ be the corresponding monic Jacobi matrix. Assume that $J_m$ is bounded, that is, its entries $\lambda_n$ and $c_n$ are bounded and let $\kappa\in\dC_{\pm}$ and $s_0^*\in\overline{\dC}_{\mp}\setminus\{0\}$. Then the corresponding $J^{-*}_m$ is bounded and as a result the set of all zeros of the polynomials $P^{-*}_n(\kappa,z)$'s is bounded.}
\end{proposition}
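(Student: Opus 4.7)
The plan is to mirror the proof of Proposition \ref{prop:boundednessC}: once we show that the sequences $\lambda_n^{-*}$ and $c_n^{-*}$ are bounded, boundedness of the set of zeros of the $P_n^{-*}(\kappa,z)$'s follows from \cite[Theorem 3.4 (a)]{B01}. Inspecting \eqref{lAndcFormulasGT}, it suffices to produce uniform positive upper and lower bounds for $|R_n(\kappa)/R_{n-1}(\kappa)|$.

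The first step is to factor $R_n(\kappa) = P_n(\kappa)(1 + u_n/s_0^*)$, where $u_n := Q_n(\kappa)/P_n(\kappa)$. By Proposition \ref{prop:zeros}(iii) applied to $\{P_n\}$ and to the reindexed sequence $\{Q_{n+1}\}$ (which is a monic OPS associated with the shifted, still bounded, recurrence coefficients), the ratios $|P_n(\kappa)/P_{n-1}(\kappa)|$ and $|Q_n(\kappa)/Q_{n-1}(\kappa)|$ are sandwiched between two positive constants uniformly in $n$. The problem is thereby reduced to controlling $|s_0^* + u_n|$ from above and below.

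The partial-fractions-and-interlacing argument in the proof of Proposition \ref{prop:zeros}, applied to the pair $(Q_n, P_n)$ whose zeros interlace on $\dR$, gives $\Im \kappa \cdot \Im u_n < 0$, which is exactly the inequality \eqref{eq:Qn_Pn}. Combined with the standing hypothesis $s_0^* \in \overline{\dC}_{\mp}\setminus\{0\}$, the numbers $s_0^*$ and $u_n$ lie in the same closed half-plane (opposite to $\kappa$), so
\[
|s_0^* + u_n| \geq |\Im s_0^*| + |\Im u_n|.
\]
When $s_0^*$ is strictly non-real the right-hand side is bounded below by $|\Im s_0^*| > 0$ uniformly in $n$. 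In the boundary case $s_0^* \in \dR\setminus\{0\}$ one invokes the convergence of $u_n$ to the Cauchy transform $m(\kappa) = \int d\mu(t)/(\kappa-t)$ on $\dC\setminus\dR$, whose imaginary part does not vanish there, to conclude that $|\Im u_n|$ is uniformly positive for large $n$; the finitely many remaining indices are handled by the non-vanishing statement of Theorem \ref{thm:RegGer}. The same convergence supplies a uniform upper bound on $|u_n|$, hence on $|s_0^* + u_n|$.

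Combining these bounds with those on $|P_n/P_{n-1}|$ yields uniform two-sided bounds on $|R_n(\kappa)/R_{n-1}(\kappa)|$, and \eqref{lAndcFormulasGT} then gives the boundedness of $\lambda_n^{-*}$ and $c_n^{-*}$. The step I expect to be the most delicate is the uniform lower bound on $|s_0^* + u_n|$ in the boundary case of real $s_0^*$, which genuinely requires input beyond Proposition \ref{prop:zeros}, namely convergence of $Q_n/P_n$ to the Stieltjes transform of the orthogonality measure.
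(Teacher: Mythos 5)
Your proposal is correct and follows essentially the same route as the paper's proof: the factorization $R_n(\kappa)/R_{n-1}(\kappa)=\bigl(P_n(\kappa)/P_{n-1}(\kappa)\bigr)\cdot\bigl(s_0^*+Q_n(\kappa)/P_n(\kappa)\bigr)/\bigl(s_0^*+Q_{n-1}(\kappa)/P_{n-1}(\kappa)\bigr)$, Proposition \ref{prop:zeros}(iii) for the polynomial ratios, Markov's theorem for the convergence of $Q_n(\kappa)/P_n(\kappa)$ to a limit in the half-plane opposite to $\kappa$, and Theorem \ref{thm:RegGer} to rule out vanishing denominators, after which \eqref{lAndcFormulasGT} and \cite[Theorem 3.4 (a)]{B01} finish the argument exactly as in Proposition \ref{prop:boundednessC}. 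Your case split on $\Im s_0^*\neq 0$ versus $s_0^*\in\dR\setminus\{0\}$ is a slightly more explicit bookkeeping of the same estimates (and the auxiliary bound on $|Q_n(\kappa)/Q_{n-1}(\kappa)|$ is not actually needed), but it introduces no new ingredient and no gap.
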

\begin{proof}
\tc{First, notice that
\begin{equation}\label{FormForRatio}
\frac{R_{n+1}(\kappa)}{R_{n}(\kappa)}=\frac{P_{n+1}(\kappa)}{P_n(\kappa)}
\cdot\frac{s_0^*+\frac{Q_{n+1}(\kappa)}{P_{n+1}(\kappa)}}{s_0^*+\frac{Q_{n}(\kappa)}{P_{n}(\kappa)}}.
\end{equation}
From the Markov theorem (see \cite[Theorem 2.6.2]{Ismail09}) we get that 
\[
\frac{Q_{n}(\kappa)}{P_{n}(\kappa)}\to w,
\]
where $w\in{\dC}_{\mp}$ for $\kappa\in\dC_{\pm}$. Thus, in addition to Theorem \ref{thm:RegGer}, we also know that $s_0^*+w\ne 0$. Hence,
Proposition \ref{prop:zeros} and \eqref{lAndcFormulasGT} yield that $\dfrac{R_n(\kappa)}{R_{n-1}(\kappa)}$ and $\dfrac{R_{n}(\kappa)}{R_{n+1}(\kappa)}$ are bounded sequences.  Then it follows from \eqref{FormForRatio} that the sequences $\lambda_n^{-*}$ and $c_n^{-*}$ are also bounded. Thus, the corresponding Jacobi operator is bounded in $\ell^2$ and as before the boundedness of zeroes follows from \cite[Theorem 3.4 (a)]{B01}. }
\end{proof}

\tc{Just as in the case of the Christoffel transformation of the polynomials $P_n(z)$ at $\kappa\in\dC\setminus\dR$, we have that a Nevai class is invariant under the Geronimus transformation. }
\begin{proposition}
Let $\{P_n(z)\}_{n=0}^\infty$ be a monic OPS with respect to a positive-definite linear functional and let $\kappa\in\dC\setminus\dR$. If $\{P_n(z)\}_{n=0}^\infty$ is in the Nevai class $\mathcal{N}(a,c)$, then so is $\{P^{-*}_n(\kappa,z)\}_{n=0}^\infty$ provided the latter exists. 
\end{proposition}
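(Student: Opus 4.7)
The plan is to use the explicit formulas \eqref{lAndcFormulasGT} for the entries $\lambda^{-*}_{n+1}$ and $c^{-*}_{n+1}$ together with the identity \eqref{FormForRatio} already established in the proof of Proposition \ref{UboundP}, and show that each ratio appearing in those formulas has an explicit limit.

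First, I would observe that since $\{Q_n(z)\}_{n=0}^\infty$ satisfies the same three-term recurrence as $\{P_n(z)\}_{n=0}^\infty$, the hypothesis $\{P_n\}\in\mathcal{N}(a,c)$ forces $\{Q_n\}\in\mathcal{N}(a,c)$ as well, so by the ratio asymptotic \eqref{def:RatioAsympF}
\[
\frac{P_{n+1}(\kappa)}{P_n(\kappa)}\to f(\kappa), \qquad \frac{Q_{n+1}(\kappa)}{Q_n(\kappa)}\to f(\kappa).
\]
Second, by Markov's theorem (applied as in the proof of Proposition \ref{UboundP}) one has $Q_n(\kappa)/P_n(\kappa)\to w$ with $w\in\dC_{\mp}$ when $\kappa\in\dC_{\pm}$. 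Plugging these into \eqref{FormForRatio} gives
\[
\frac{R_{n+1}(\kappa)}{R_n(\kappa)}\to f(\kappa)\cdot\frac{s_0^*+w}{s_0^*+w}=f(\kappa),
\]
where the quotient is legitimate because $s_0^*\in\overline{\dC}_{\mp}\setminus\{0\}$ and $w\in\dC_{\mp}$ have imaginary parts of the same (non-strictly) sign with $\Im w\ne 0$, so $s_0^*+w\ne 0$.

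Third, substituting these limits into \eqref{lAndcFormulasGT} yields
\[
c^{-*}_{n+1}\;=\;c_{n+1}-\frac{R_n(\kappa)}{R_{n-1}(\kappa)}+\frac{R_{n+1}(\kappa)}{R_n(\kappa)}\;\longrightarrow\; c-f(\kappa)+f(\kappa)=c
\]
and
\[
\lambda^{-*}_{n+1}\;=\;\lambda_n\cdot\frac{R_n(\kappa)}{R_{n-1}(\kappa)}\cdot\frac{R_{n-2}(\kappa)}{R_{n-1}(\kappa)}\;\longrightarrow\; a\cdot f(\kappa)\cdot\frac{1}{f(\kappa)}=a,
\]
which is exactly the assertion that $\{P_n^{-*}(\kappa,z)\}\in\mathcal{N}(a,c)$.

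The delicate point, which I would address explicitly, is the nonvanishing of the limiting ratio $f(\kappa)$ in the denominator of the $\lambda^{-*}$ computation. When $a\ne 0$ this follows from the fact that $f(z)$ and $\tfrac{(z-c)-\sqrt{(z-c)^2-4a^2}}{2}$ are the two roots of $y^2-(z-c)y+a^2=0$, whose product equals $a^2\ne 0$; when $a=0$ one has $f(z)=z-c$, which is nonzero at $\kappa\in\dC\setminus\dR$ since $c\in\dR$. No other step of the argument is really an obstacle, as the existence of $\{P_n^{-*}(\kappa,z)\}$ is precisely the standing hypothesis and it is what makes \eqref{lAndcFormulasGT} and \eqref{FormForRatio} meaningful for every $n$.
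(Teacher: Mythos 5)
Your proof is correct and follows essentially the same route as the paper's: Markov's theorem combined with \eqref{FormForRatio} gives $R_{n+1}(\kappa)/R_n(\kappa)\to f(\kappa)$, and then one passes to the limit in \eqref{lAndcFormulasGT}. Your additional checks (nonvanishing of $f(\kappa)$ and of $s_0^*+w$, the latter using $s_0^*\in\overline{\dC}_{\mp}\setminus\{0\}$ as in Theorem \ref{thm:RegGer}) merely make explicit what the paper leaves implicit.
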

\begin{proof}  According to the Markov theorem (see \cite[Theorem 2.6.2]{Ismail09}), the sequence $\dfrac{Q_n(z)}{P_n(z)}$ converges uniformly on compact subsets of $\dC\setminus\dR$. \tc{Consequently, it follows from \eqref{FormForRatio} that} $\frac{R_{n+1}(z)}{R_{n}(z)}\rightarrow f(z)$, where $f(z)$ is defined in
\tc{the proof of} Proposition \ref{prop:Nevai_Pn*}. Thus, we see from \eqref{lAndcFormulasGT} that $\lambda^{-*}_n\rightarrow a$ and $c^{-*}_n \rightarrow c$. Hence, $\{P^{-*}_n(\kappa,z)\}_{n=0}^\infty$ is in $\mathcal{N}(a,c)$.
\end{proof}

One can be more specific about locations of zeros for each $n$.
\begin{theorem}\label{prop:*zeroes} 
Let $\{P_n(z)\}_{n=0}^\infty$ be a monic OPS  with respect to a positive-definite linear functional, let $P_n^{-*}(\kappa, z)$ be the Geronimus transformation of $\{P_n(z)\}_{n=0}^\infty$ and let $R_n(z)= P_n(z) + \frac{1}{s^*_0}Q_n(z)$ with $s^*_0\in \mathbb{R}\setminus\{0\}$.  For $n\geq 1$,
\begin{enumerate}[(i)]
    \item If $\kappa \in \mathbb{C}_+, \text{ then the zeros of } P_n^{-*}(\kappa, z) \text{ lie in the horizontal strip }
    \newline \left\{ z\in \mathbb{C}\, \middle\vert\,  0 < \Im z \leq -\frac{1}{\Im\left(\frac{R_{n-1}(\kappa)}{R_n(
\kappa)}\right)}\right\}$.
\item If $\kappa \in \mathbb{C}_-, \text{ then the zeros of } P_n^{-*}(\kappa, z) \text{ lie in the horizontal strip } 
\newline \left\{z\in \mathbb{C} \,\middle\vert \, -\frac{1}{\Im\left(\frac{R_{n-1}(\kappa)}{R_n(
\kappa)}\right)}\leq \Im z <0\right\}.$
\end{enumerate}

%If $\kappa \in \mathbb{C}_+$ then for all $n=1,2, \dots$, the zeros of $P_n^{-*}(\kappa, z)$ lie in $\mathbb{C}_+$ and if $\kappa \in \mathbb{C}_-$ then for all $n=1,2, \dots$, the zeros of $P_n^{-*}(\kappa, z)$ lie in $\mathbb{C}_-$.

\end{theorem}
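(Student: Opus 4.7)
The plan is to imitate the proof of Theorem \ref{thrm: zeros} line by line, with the role of the ``characteristic ratio'' $P_{n-1}(\kappa)/P_n(\kappa)$ now played by $R_{n-1}(\kappa)/R_n(\kappa)$. The algebraic setup is provided by the explicit formula $P_n^{-*}(\kappa,z)=P_n(z)+A_nP_{n-1}(z)$ with $A_n=-R_n(\kappa)/R_{n-1}(\kappa)$, recalled just before Theorem \ref{thm:RegGer}. Thus for $z_0$ with $P_n(z_0)\neq 0$, the condition $P_n^{-*}(\kappa,z_0)=0$ is equivalent to
\[
\frac{P_{n-1}(z_0)}{P_n(z_0)}=\frac{R_{n-1}(\kappa)}{R_n(\kappa)},
\]
which plays the same structural role as equation (\ref{eq:kernel2}) does for the Christoffel transformation.

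The second ingredient, and really the only new point, is that because $s_0^*\in\dR\setminus\{0\}$ the polynomials $R_n(z)=P_n(z)+(1/s_0^*)Q_n(z)$ have real coefficients and satisfy the same three-term recurrence as $\{P_n(z)\}_{n=0}^\infty$, with the same positive $\lambda_n$'s. Hence by Favard's theorem (as already noted in the paragraph following \eqref{def:Rn}), $\{R_n(z)\}_{n=0}^\infty$ is itself a monic OPS with respect to some positive-definite linear functional. Consequently Proposition \ref{prop:zeros} applies verbatim to $R_n$, yielding the bound $0>\Im(R_{n-1}(\kappa)/R_n(\kappa))\geq -1/\Im\kappa$ for $\kappa\in\dC_+$ (and the mirror bound for $\kappa\in\dC_-$).

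With these two pieces, the three-part argument of Theorem \ref{thrm: zeros} ports over directly. Fix $\kappa\in\dC_+$. First, no zero $z_0\in\dC_-$ is possible: the displayed identity would force $\Im(P_{n-1}(z_0)/P_n(z_0))<0$ inherited from the $R$-side, contradicting Proposition \ref{prop:zeros}(ii) applied to $P_n$. Second, no real zero $x_0$ is possible: if $P_n(x_0)\neq 0$, the identity fails since the right side is non-real while the left is real; if $P_n(x_0)=0$, the interlacing of zeros of $P_n$ and $P_{n-1}$ forces $P_{n-1}(x_0)\neq 0$, so $P_n^{-*}(\kappa,x_0)=A_nP_{n-1}(x_0)\neq 0$. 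Third, if $z_0\in\dC_+$ with $\Im z_0>-1/\Im(R_{n-1}(\kappa)/R_n(\kappa))$ were a zero, then chaining the identity with Proposition \ref{prop:zeros}(i) for $P_n$ produces the contradiction $\Im(P_{n-1}(z_0)/P_n(z_0))<-1/\Im z_0\leq \Im(P_{n-1}(z_0)/P_n(z_0))$. This proves (i); statement (ii) follows from (i) by complex conjugation, using that both $P_n$ and $R_n$ have real coefficients so $\overline{P_n(z)}=P_n(\overline z)$ and $\overline{R_n(z)}=R_n(\overline z)$.

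I do not foresee a real obstacle here; the argument is essentially a copy of the Christoffel case. The only step that genuinely uses something new is the Favard-style observation that the realness of $s_0^*$ legitimately places $\{R_n(z)\}$ back in the positive-definite framework, so that Proposition \ref{prop:zeros} is applicable to $R_{n-1}/R_n$ and produces the strip bound $-1/\Im(R_{n-1}(\kappa)/R_n(\kappa))$.
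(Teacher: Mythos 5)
Your proposal is correct and follows essentially the same route as the paper's own proof: the identity $P_{n-1}(z_0)/P_n(z_0)=R_{n-1}(\kappa)/R_n(\kappa)$ at a hypothetical zero, the Favard-type observation that $\{R_n(z)\}_{n=0}^\infty$ is a positive-definite OPS so that Proposition \ref{prop:zeros} applies to the ratio $R_{n-1}(\kappa)/R_n(\kappa)$, the three-case elimination (lower half-plane, real axis, upper half-plane above the strip), and conjugation for part (ii). The only cosmetic difference is in the real-zero subcase with $P_n(x_0)=0$, where you conclude directly that $P_n^{-*}(\kappa,x_0)=A_nP_{n-1}(x_0)\neq 0$ (using $R_n(\kappa)\neq 0$ implicitly), while the paper phrases the same point as a contradiction with interlacing via Theorem \ref{thm:RegGer}.
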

\proof Let $\kappa \in \mathbb{C}_+$ and suppose $P^{-*}_n(\kappa, z_0)=0$ for some $z_0\in \mathbb{C}_-$ and some $n\geq 1$. Then
\begin{equation}\label{eq:Pn_inverse}P_n(z_0)\left(s^*_0P_{n-1}(\kappa)+Q_{n-1}(\kappa)\right)
= P_{n-1}(z_0)\left(s^*_0P_n(\kappa)+Q_n(\kappa)\right).
\end{equation}  Since $P_n(z)$ has only real zeros for $n=1,2, \dots$, equation (\ref{eq:Pn_inverse}) is equivalent to
\begin{equation}\label{eq:PnRn}\frac{P_{n-1}(z_0)}{P_n(z_0)}=\frac{R_{n-1}(\kappa)}{R_n(\kappa)}.
\end{equation} By Proposition \ref{prop:zeros},
 $\Im \left(\frac{P_{n-1}(z_0)}{P_n(z_0)}\right) >0 $ yet since $\{R_n(z)\}_{n=0}^\infty$ is a monic OPS, $\Im \left(\frac{R_{n-1}(\kappa)}{R_n(\kappa)} \right)<0$ which is a contradiction. Thus the zeros of $P^{-*}_n(\kappa, z)$ must lie in $\mathbb{C}_+$ for all $n=1,2, \dots$.\\
 Now suppose $x_0\in \mathbb{R}$ and there exists $n \in \mathbb{{N}} $ such that $P^{-*}_n(\kappa,x_0)=0$. Then
 \begin{equation}\label{eq:PnRn2}P_n(x_0)=\frac{R_n(\kappa)}{R_{n-1}(\kappa)}P_{n-1}(x_0).
 \end{equation}
 If $x_0$ is not a zero of $P_n(z)$ then (\ref{eq:PnRn}) holds however $\Im \left( \frac{P_{n-1}(x_0)}{P_n(x_0)}\right)=0$ while $\Im\left(\frac{R_{n-1}(\kappa)}{R_n(\kappa)} \right) <0$. If $x_0$ is a zero of $P_n(z)$ then (\ref{eq:PnRn2}) implies $x_0$ must be a zero of $P_{n-1}(z)$ since by Theorem \ref{thm:RegGer}, $s^*_0R_n(\kappa)=s^*_0P_n(\kappa)+Q_n(\kappa) \neq 0$ for any $n=1, 2, \dots$, contradicting the fact that the zeros of $P_n(z)$ and $P_{n-1}(z)$ interlace.

Now if $z_0$ is a zero of $P^{-*}_n(\kappa,z)$ for some $n$ such that
$\Im z_0 > -\frac{1}{\Im\left(\frac{R_{n-1}(\kappa)}{R_n(\kappa)} \right)}$, then 
\[-\frac{1}{\Im z_0}> \Im \left( \frac{R_{n-1}(\kappa)}{R_n(\kappa)}\right)= \Im\left(\frac{P_{n-1}(z_0)}{P_n(z_0)} \right)
\] but by Theorem \ref{thrm: zeros}, $\Im\left( \frac{P_{n-1}(z_0)}{P_n(z_0)}\right)\geq -\frac{1}{\Im z_0}$.
\\
Therefore, the zeros of $P^{-*}_n(\kappa, z)$ can only lie in $\left\{z \in \mathbb{C}: 0 < \Im z \leq -\frac{1}{\Im\left(\frac{R_{n-1}(\kappa)}{R_n(
\kappa)}\right)}\right\}$. This proves $(i)$.\\
 \indent  Since $\overline{\frac{P_{n-1}(z_0)}{P_n(z_0)}}= \frac{P_{n-1}(\overline{z_0})}{P_n(\overline{z_0})}$ and $\overline{\frac{R_{n-1}(z_0)}{R_n(z_0)}}= \frac{R_{n-1}(\overline{z_0})}{R_n(\overline{z_0})}$, $(ii)$ follows from $(i)$.\\
\qed

In the case of a Nevai class, one can get more information about the asymptotic behaviour of zeros. 

\begin{theorem} \label{thrm:zeroes_convergence_geronimous}
Let $\{P_n(z)\}_{n=0}^\infty$ be an OPS in a Nevai class and let $P_n^{-*}(\kappa, z)$ be it Geronimus transformation at $\kappa\in\dC_\pm$ for some $s_0^*\in \mathbb{R}\setminus\{0\}$. Then there exists a \tc{sequence of zeros %zero $\xi_n$ 
$\{\xi_n \}_{n=1}^\infty$ such that $\xi_n$ is a zero of $P_n^{-*}(\kappa, z)$  and} $\xi_n \rightarrow \kappa$  while the imaginary part of the remaining zeros of the polynomial $P_n^{-*}(\kappa, z)$ converge to zero. 
\end{theorem}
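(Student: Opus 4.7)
The plan is to follow the strategy of Theorem~\ref{thrm: n-zero behavior}, with one essential modification. In the Christoffel case, $\kappa$ was automatically a zero of the auxiliary function $G_n$, which was then factored out, forcing the remaining zeros to the real line. In the Geronimus case, by contrast, $\kappa$ is \emph{not} a zero of $P_n^{-*}(\kappa, z)$ (a short Wronskian computation gives $P_n^{-*}(\kappa,\kappa) = -\lambda_2\cdots\lambda_n/(s_0^* R_{n-1}(\kappa))\neq 0$), so the zero that Hurwitz's theorem will produce near $\kappa$ survives and becomes the distinguished $\xi_n$.

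Assume $\kappa\in\dC_+$ (the other case is symmetric). Introduce the rational function
\[
F_n(z) = \frac{P_n(z)}{P_{n-1}(z)} - \frac{R_n(\kappa)}{R_{n-1}(\kappa)},
\]
so that $P_n^{-*}(\kappa,z) = P_{n-1}(z)F_n(z)$. Since the zeros of $P_{n-1}$ are real, the zeros of $F_n$ in $\dC_+$ coincide exactly with the zeros of $P_n^{-*}(\kappa,z)$ in $\dC_+$. Because $\{P_n(z)\}_{n=0}^\infty$ is in a Nevai class, $P_n(z)/P_{n-1}(z) \to f(z)$ uniformly on compact subsets of $\dC\setminus\dR$ by \cite[Theorem~2.1]{Simon04}, while $R_n(\kappa)/R_{n-1}(\kappa) \to f(\kappa)$ by \eqref{FormForRatio} combined with Markov's theorem (as was used in the preceding proposition). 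Hence $F_n \to f-f(\kappa)$ uniformly on compact subsets of $\dC\setminus\dR$, and the injectivity of $f$ on $\dC_+$ guarantees that $f-f(\kappa)$ has a simple isolated zero at $z=\kappa$.

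Fix a small closed disk $\Delta\subset\dC_+$ centered at $\kappa$ on which $\kappa$ is the only zero of $f-f(\kappa)$. Hurwitz's theorem produces, for all sufficiently large $n$, a unique simple zero $\xi_n$ of $F_n$ in $\Delta$; since $P_{n-1}(\xi_n)\neq 0$, this $\xi_n$ is a zero of $P_n^{-*}(\kappa,z)$, and $\xi_n \to \kappa$. For the remaining $n-1$ zeros, let $z_n^*$ be the one of maximum imaginary part; by Proposition~\ref{UboundP} the sequence $\{z_n^*\}$ is bounded, so any subsequence has a convergent sub-subsequence $z_{n_m}^*\to z_0$. If $z_0\in\dC_+$, uniform convergence on a compact neighborhood of $z_0$ combined with $F_{n_m}(z_{n_m}^*)=0$ forces $f(z_0)=f(\kappa)$, hence $z_0=\kappa$ by injectivity; but then $z_{n_m}^*\in\Delta$ eventually, contradicting the uniqueness of $\xi_{n_m}$ as a zero of $F_{n_m}$ in $\Delta$. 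Therefore $z_0\in\dR$, so $\Im z_n^*\to 0$, and by choice of $z_n^*$ the imaginary parts of all the other zeros of $P_n^{-*}(\kappa,z)$ tend to zero as well.

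The main obstacle is precisely the separation of $\xi_n$ from the rest of the zeros that are accumulating near $\kappa$; it is resolved by the simplicity of $\kappa$ as a zero of the limit function $f-f(\kappa)$, which through Hurwitz delivers exactly one local zero of $F_n$ rather than several, so that the other zeros are compelled to approach $\dR$.
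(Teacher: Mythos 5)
Your proof is correct and follows essentially the same route as the paper's: the same auxiliary function $\frac{P_n(z)}{P_{n-1}(z)}-\frac{R_n(\kappa)}{R_{n-1}(\kappa)}$ (your $F_n$, the paper's $H_n$), the same uniform convergence to $f(z)-f(\kappa)$ via the Nevai class together with Markov's theorem, the same Hurwitz argument isolating a single simple zero $\xi_n\to\kappa$, and the same maximum-imaginary-part compactness argument forcing the remaining zeros to the real line. The extra Casoratian observation that $P_n^{-*}(\kappa,\kappa)\neq 0$ is a nice touch but not needed.
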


\begin{example}
Consider the monic Chebyshev polynomials $\{{T}_n(x)\}_{n=0}^\infty$ as in Example \ref{Ex:Chebyshev} and put $s_0^*=1$. According to Theorem \ref{prop:*zeroes}, the zeros of 
\[
{T}^{-*}_n(i,z)=T_n(z)+A_nT_{n-1}(z)
\]
lie in $\mathbb{C}_+$ and by Theorem \ref{thrm:zeroes_convergence_geronimous}, they cluster at $i$ as can be seen in Figures \ref{fig:ChebyshevInverse1} and \ref{fig:ChebyshevInverse2}.
\end{example}

\begin{figure}[ht]
\centering
\begin{minipage}{.5\textwidth}
  \centering
  \includegraphics[scale=0.25]{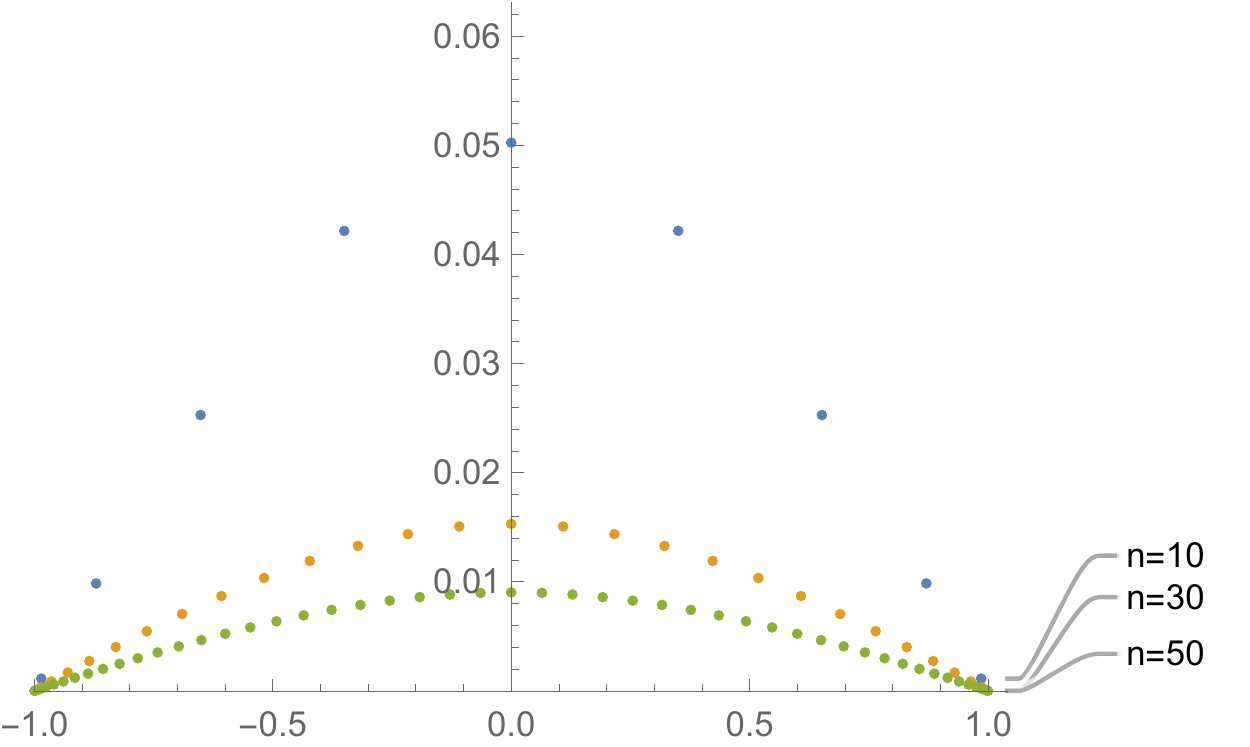}
  \caption{The behavior of the zeroes of ${T}_n^{-*}(z)$ for $s_0^*=1$ and $\kappa=i$ when $n$ is increasing.}
    \label{fig:ChebyshevInverse1}
\end{minipage}%
\begin{minipage}{.5\textwidth}
  \centering
  \includegraphics[width=\linewidth]{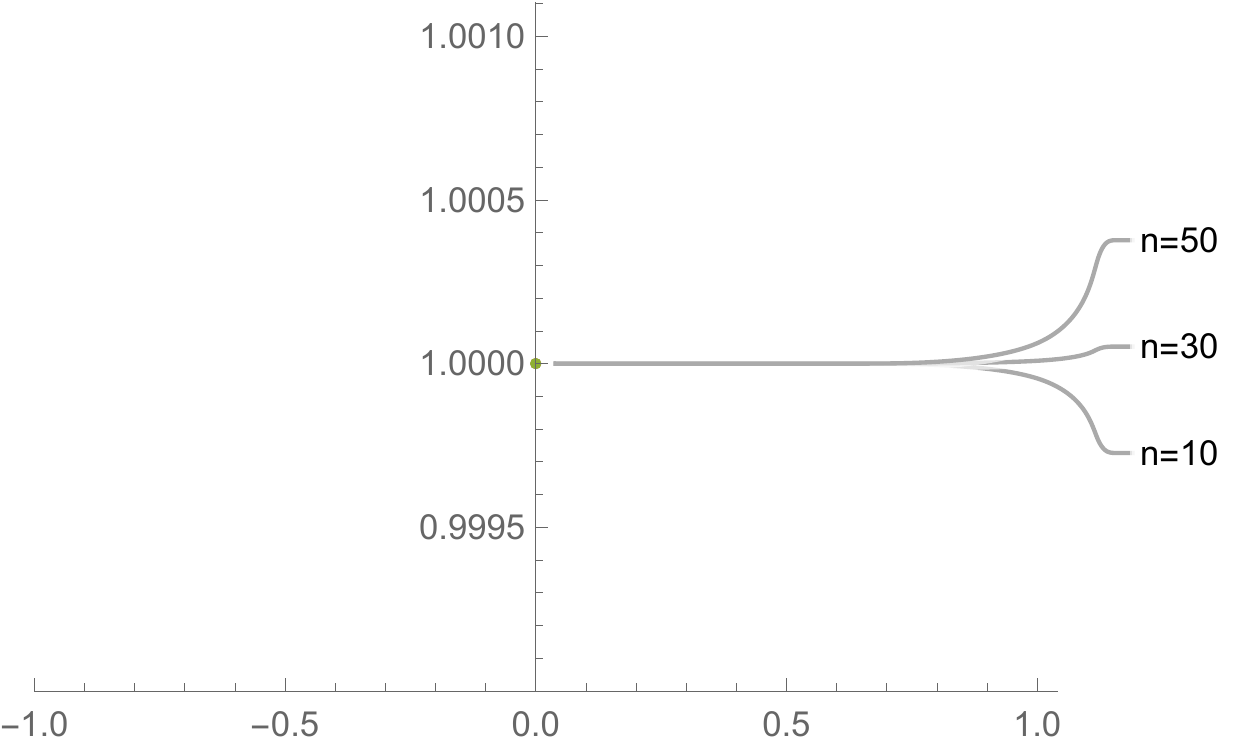}
  \caption{The behavior of the zeroes of ${T}_n^{-*}(z)$ at the neighborhood of $i$ for $s_0^*=1$ and $\kappa=i$ when $n\to\infty$.}
    \label{fig:ChebyshevInverse2}
\end{minipage}
\end{figure}
A similar behavior takes places if $\kappa=1+i$.
\begin{figure}[h!]
\centering
\begin{minipage}{.5\textwidth}
  \centering
  \includegraphics[width=\linewidth]{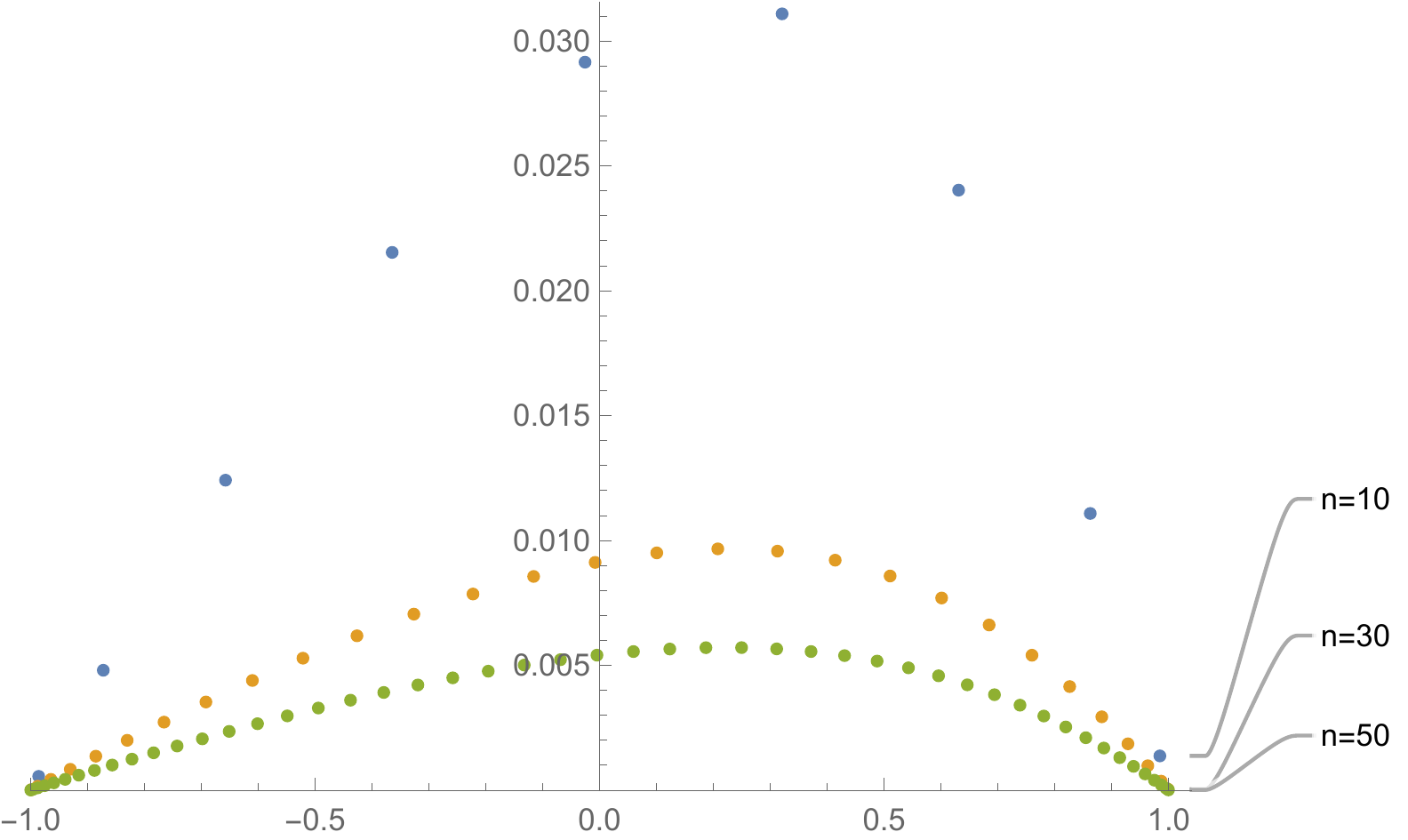}
  \caption{The behavior of the zeroes of ${T}_n^{-*}(z)$ for $s_0^*=1$ and $\kappa=1+i$ when $n$ is increasing.}
    \label{fig:ChebyshevInverse_1+i}
\end{minipage}%
\begin{minipage}{.5\textwidth}
  \centering
  \includegraphics[ scale=0.4]{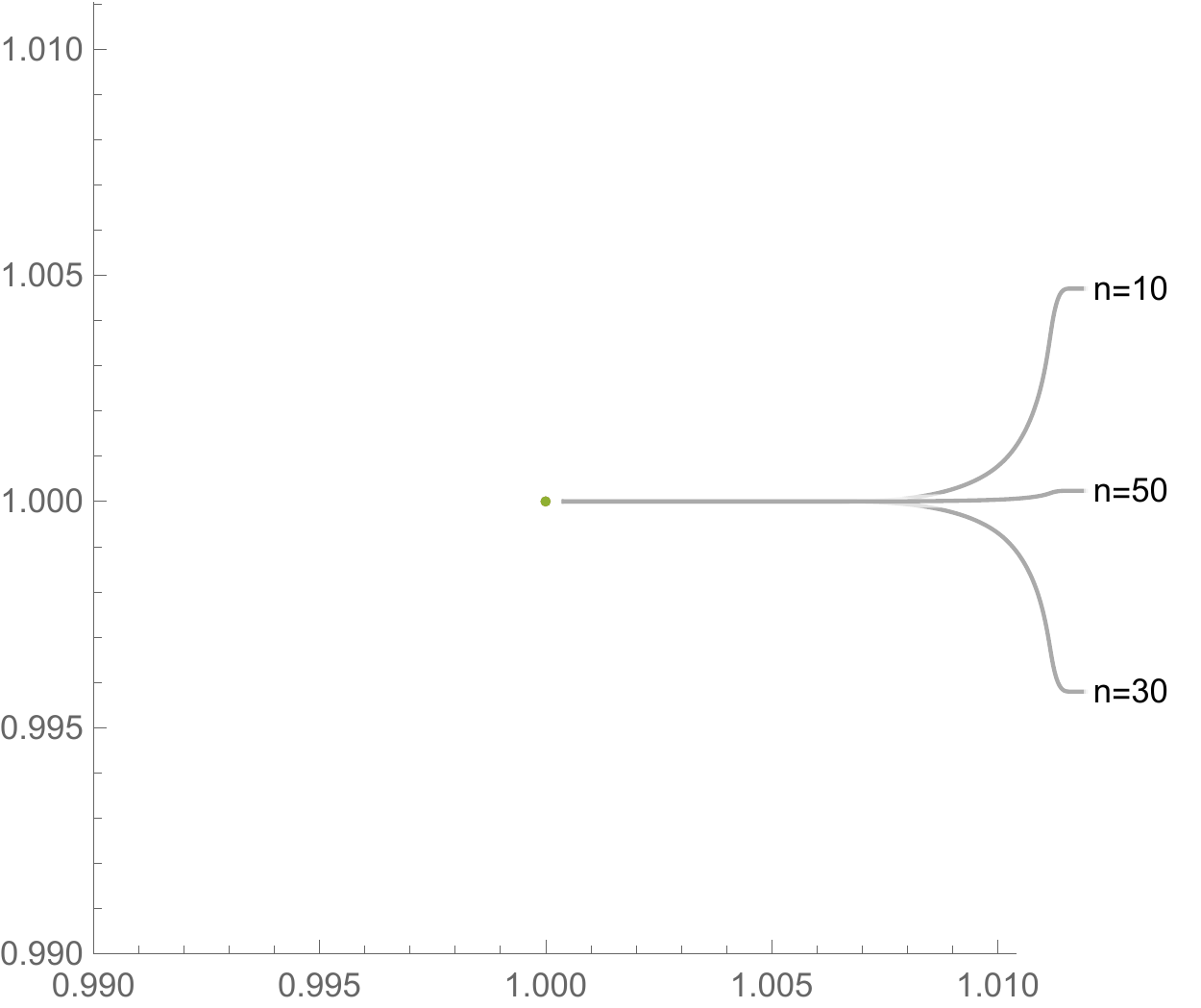}
  \caption{The behavior of the zeroes of ${T}_n^{-*}(z)$ at the neighborhood of $1+i$ for $s_0^*=1$ and $\kappa=1+i$ when $n\to\infty$.}
    \label{fig:ChebyshevInverse_zoom
    1+i}
\end{minipage}
\end{figure}

\proof Without loss of generality let $\kappa \in \mathbb{C}_+$. Notice that $P_n^{-*}(\kappa, z)$ can be re-written as
\begin{equation*}
   P_{n}^{-*}(\kappa, z)= P_n(z)-\left(\frac{R_n(\kappa)}{R_{n-1}(\kappa)}\right)P_{n-1}(z)
\end{equation*} where $R_n(z)=P_n(z)+\frac{1}{s^*_0}Q_n(z)$ as in $(\ref{def:Rn})$.
Let $H_n(z)=\frac{P_n(z)}{P_{n-1}(z)}-\frac{R_n(z)}{R_{n-1}(z)}.$ Then $H_n(z)$ is defined on $\mathbb{C}\setminus \mathbb{R}$ and $H_n(z)$ and $P^{-*}_n(\kappa,z)$ have the same zeros. Now since $\{P_n(z)\}_{n=0}^\infty$ and $\{R_n(z)\}_{n=0}^\infty$ are in the same Nevai class, we see that $H_n(z)$ converges uniformly to $f(z)-f(\kappa)$ on compact subsets of $\mathbb{C}_+$ where $f(z)= \frac{(z-c)+\sqrt{(z-c)^2-4a^2}}{2}$. Let $r>0$ and let $\Delta_r=\{z\in \mathbb{C}: |z-\kappa|<r\}.$ Then since $f(z)-f(\kappa) \not\equiv 0$, $\kappa$ is an isolated zero of $f(z)-f(\kappa)$ and thus $f(z)-f(\kappa)$ has no zeros in $\Delta_r \setminus \{\kappa\}$ for any $r$ sufficiently small. Just as in Theorem \ref{thrm: n-zero behavior}, we have by Hurwitz's Theorem that $H_n(z)$ has only a simple zero in $\Delta_r$ for large $n$. Since $r$ can be taken arbitrarily small, we see that there is a subsequence $\{{z}_{n_k}\}_{k=0}^\infty = \{\xi_{n_k}\}_{k=0}^\infty$ of zeros of $\{H_n(z)\}_{n=0}^\infty$ (and hence of $\{P^{-*}_n(\kappa,z)\}_{n=0}^\infty$) converging to $\kappa$.

%$\kappa$ must be an accumulation point of the zeros of $\{H_n(z)\}_{n=0}^\infty$ and hence of the zeros of $\{P_n^{-*}(\kappa, z)\}_{n=0}^\infty$. Let the subsequence of zeros converging to $\kappa$ be $\{{z}_{n_k}\}_{k=0}^\infty = \{\xi_{n_k}\}_{k=0}^\infty$ and
Let $\hat{z}_n$ be such that $\Im \hat{z}_n= \max\{ \Im z_{n,j}: z_{n,j} \text{ is a zero of }P_n^{-*}(\kappa, z),\,\, {z}_{n,j}\neq {\xi}_{n_k}\}$. Then there exists a convergent subsequence $\{\hat{z}_{n_k}\}_{k=0}^\infty$ with limit $z_0 \in \mathbb{C}$. As before, if $z_0\in \mathbb{C}_+$ then $H_n(\hat{z}_{n_k})\rightarrow f(z_0)-f(\kappa)$ so by the injectivity of $f(z)$, it must be the case that $z_0=\kappa$. Fixing $r>0$, we know by Hurwitz's Theorem that $P_n^{-*}(\kappa, z)$ has only a simple zero in $\Delta_r$ which is $\xi_{n_k}$. Since $P_n^{-*}(\kappa, z)$ has no other zeros in $\Delta_r$ for large $n$, $\hat{z}_{n_k}$ cannot converge to $\kappa$. Thus, since $\hat{z}_{n_k}\in \mathbb{C}_+$, it follows that $\Im z^*_{n_k}\rightarrow 0$. By definition of $\hat{z}_n$, this shows that the imaginary part of the remaining zeros of $\{P^{-*}_n(\kappa, z)\}_{n=0}^\infty$ must converge to 0.
%\indent Now if $s_0^*=0$ then the result still holds since then $H_n(z)= \frac{P_n(z)}{P_{n-1}(z)}-\frac{Q_n(\kappa)}{Q_{n-1}(\kappa)}$ where $\{P_n(z)\}_{n=0}^\infty$ and $\{Q_n(z)\}_{n=0}^\infty$ satisfy the same recurrence relation so $H_n(z)$ converges uniformly to $f(z)-f(\kappa)$ as above.\\
\qed

\begin{remark}
\tc{Theorem \ref{thrm:zeroes_convergence_geronimous} suggests that the Jacobi matrix corresponding to the Geronimus transformation at $\kappa$ should have $\kappa$ as an eigenvalue and it will be shown later in Section 5 that the latter statement holds true in general and it is not specific for Nevai classes. Also,} note that in Figure \ref{fig:ChebyshevInverse2}, there is a cluster point of zeros at $z=i$. However, the convergence to $i$ occurs at too quick of a rate for Mathematica to distinguish. Thus, it makes sense to consider a different scale. In particular, Figure \ref{fig:log_graph} suggests that $\xi_n \rightarrow \kappa$ at an exponential rate. Besides, apparently, an estimate for $|\xi_n-\kappa|$ similar to \cite[Theorem 4.3]{DPW16} holds  in this case too. 

\begin{figure}[h!]
    \centering
    \includegraphics[width=0.5\textwidth]{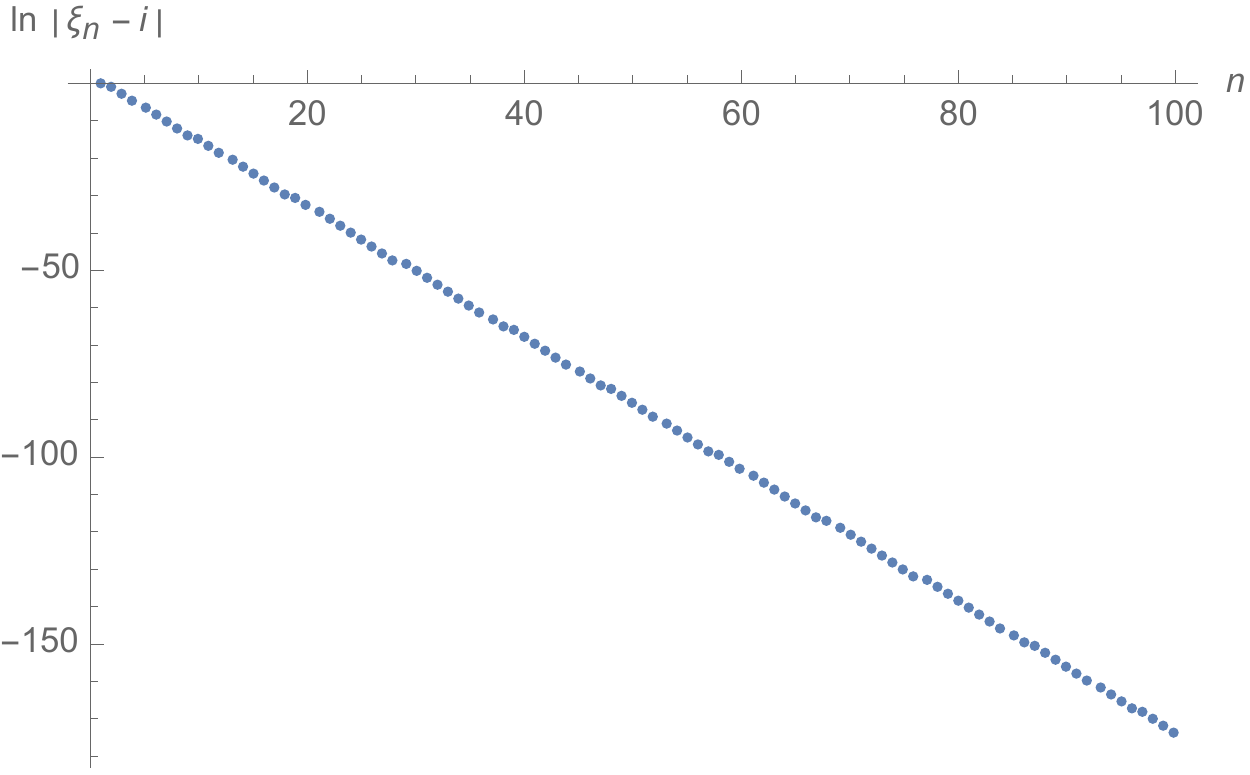}
    \caption{The graph of $\ln|\xi_n-i|$
     for $n=1,2, \dots, 100$.}
    \label{fig:log_graph}
\end{figure}
\end{remark}

For the Geronimus transformation at $\kappa\in\dC\setminus\dR$, the ratio asymptotic is preserved at all points except for $\kappa$ \tc{and we will see later that $\kappa$ is in the spectrum of the underlying Jacobi matrix.} 

\begin{theorem}
Let $\{P_n(z)\}_{n=0}^\infty$ be a monic OPS with respect to a \tc{positive-definite} linear functional $\mathcal{L}$ and let $\{P_n^{-*}(\kappa,z)\}_{n=0}^\infty$ be the corresponding OPS for the Geronimus transformation $\mathcal{L}^{-*}$. Let $f(z)$ be defined as in the proof of Theorem \ref{thrm: n-zero behavior}. If $\{P_n(z)\}_{n=0}^\infty$ is in a Nevai class then $\frac{P_{n}^{-*}(\kappa, z)}{P_{n-1}^{-*}(\kappa,z)}$ converges uniformly to $f(z)$ on compact subsets of $\mathbb{C}\setminus(\mathbb{R}\cup \{\kappa\})$.
\end{theorem}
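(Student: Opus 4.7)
The strategy parallels that of Theorem \ref{prop:pn_star_ratio}. I would first use the explicit formula $P_n^{-*}(\kappa,z) = P_n(z) - \frac{R_n(\kappa)}{R_{n-1}(\kappa)}P_{n-1}(z)$ to factor
\[
P_n^{-*}(\kappa,z) = P_{n-1}(z)\, H_n(z), \qquad H_n(z) := \frac{P_n(z)}{P_{n-1}(z)} - \frac{R_n(\kappa)}{R_{n-1}(\kappa)},
\]
which is well defined on $\mathbb{C}\setminus\mathbb{R}$ since $P_{n-1}$ has only real zeros. Dividing the analogous expressions for $n$ and $n-1$ yields the key decomposition
\[
\frac{P_n^{-*}(\kappa,z)}{P_{n-1}^{-*}(\kappa,z)} = \frac{P_{n-1}(z)}{P_{n-2}(z)} \cdot \frac{H_n(z)}{H_{n-1}(z)}.
\]
The first factor already converges uniformly to $f(z)$ on compact subsets of $\mathbb{C}\setminus\mathbb{R}$ by the Nevai class hypothesis, so the task reduces to showing $H_n(z)/H_{n-1}(z) \to 1$ uniformly on compact subsets of $\mathbb{C}\setminus(\mathbb{R}\cup\{\kappa\})$.

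Next, since $\{P_n\}$ and $\{R_n\}$ both lie in $\mathcal{N}(a,c)$, the Nevai ratio asymptotic gives $P_n(z)/P_{n-1}(z) \to f(z)$ uniformly on compact subsets of $\mathbb{C}\setminus\mathbb{R}$, while $R_n(\kappa)/R_{n-1}(\kappa) \to f(\kappa)$. Hence $H_n(z) \to f(z) - f(\kappa)$ uniformly on compact subsets of $\mathbb{C}\setminus\mathbb{R}$, and likewise for $H_{n-1}$.

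Fix a compact $K \subset \mathbb{C}\setminus(\mathbb{R}\cup\{\kappa\})$. By the injectivity of $f$ on $\mathbb{C}_\pm$ (already invoked in the proofs of Theorems \ref{thrm: n-zero behavior} and \ref{thrm:zeroes_convergence_geronimous}), the limit $f(z)-f(\kappa)$ is nowhere zero on $K$, so there exists $\delta > 0$ with $|f(z)-f(\kappa)| \geq \delta$ on $K$. Uniform convergence then yields $|H_n(z)|, |H_{n-1}(z)| \geq \delta/2$ for all $z \in K$ and all sufficiently large $n$, whence
\[
\left|\frac{H_n(z)}{H_{n-1}(z)} - 1\right| \leq \frac{2}{\delta}\bigl(\|H_n - (f - f(\kappa))\|_{\infty,K} + \|H_{n-1} - (f - f(\kappa))\|_{\infty,K}\bigr) \longrightarrow 0.
\]
Multiplying the two factors gives the claimed uniform convergence.

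The delicate point, and the reason $\kappa$ must be excluded, is that in every neighborhood of $\kappa$ the function $H_{n-1}$ acquires a simple zero, corresponding to the distinguished sequence $\xi_{n-1} \to \kappa$ produced by Theorem \ref{thrm:zeroes_convergence_geronimous}. Thus no uniform lower bound on $|H_{n-1}|$ can survive on any neighborhood of $\kappa$; this reflects the fact that $\kappa$ belongs to the spectrum of the Geronimus-transformed Jacobi matrix, as will be confirmed in Section 5.
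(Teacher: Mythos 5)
Your proposal is correct and follows essentially the same route as the paper's proof: the identical factorization $P_n^{-*}(\kappa,z)=P_{n-1}(z)H_n(z)$ with $H_n(z)=\frac{P_n(z)}{P_{n-1}(z)}-\frac{R_n(\kappa)}{R_{n-1}(\kappa)}$, the same use of the Nevai-class ratio asymptotics for both $\{P_n\}$ and $\{R_n\}$, and the same conclusion $H_n/H_{n-1}\to 1$ away from $\mathbb{R}\cup\{\kappa\}$. The only difference is cosmetic: you make the uniform lower bound on $|H_n|$ explicit via the nonvanishing of $f(z)-f(\kappa)$, whereas the paper infers nonvanishing of $H_n$ from Theorem \ref{thrm:zeroes_convergence_geronimous}.
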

\proof Without loss of generality, let $\kappa \in \mathbb{C}_+$ and let $H_n(z) =\frac{P_n(z)}{P_{n-1}(z)}-\frac{R_n(\kappa)}{R_{n-1}(\kappa)}$ as in the proof of Theorem \ref{thrm:zeroes_convergence_geronimous}. Then 
\[\frac{P_{n}^{-*}(\kappa,z)}{P_{n-1}^{-*}(\kappa,z)}=\frac{P_{n-1}(z)}{P_{n-2}(z)}\frac{H_{n}(z)}{H_{n-1}(z)}.
\]
Recall that $H_n(z)$ and $P_n^{-*}(\kappa,z)$ have the same zeros thus by Theorem \ref{thrm:zeroes_convergence_geronimous}, $H_n(z)$ is non-zero on compact subsets of $\mathbb{C}\setminus \mathbb{R}$ not containing $\kappa$ by taking $n$ large enough. Therefore, $\frac{H_{n}(z)}{H_{n-1}(z)}$ is holomorphic on compact subsets of $\mathbb{C}\setminus(\mathbb{R}\cup\{\kappa\})$ for large $n$. Since $\{P_n(z)\}_{n=0}^\infty$ is in a Nevai class, $H_n(z)$ converges uniformly to $f(z)-f(\kappa)$, therefore, $\frac{H_{n}(z)}{H_{n-1}(z)}\rightarrow 1$ uniformly on compact subsets of $\mathbb{C}\setminus(\mathbb{R} \cup \{\kappa\})$ and as a result,  $\frac{P_{n}^{-*}(\kappa,z)}{P_{n-1}^{-*}(\kappa,z)}\rightarrow f(z)$ uniformly on compact subsets of $\mathbb{C}\setminus(\mathbb{R} \cup \{\kappa\})$ .\\
\qed

\tc{In principle, as in the case of Christoffel transformation, one can iterate Geronimus transformations. Say,  if $\kappa_1\in\dC_{+}$ and $s_0^*=\mathcal{L}^{-*}(1)\in\overline{\dC}_{-}\setminus\{0\}$, then according to Theorem \ref{thm:RegGer} the OPS $\{P_n^{-*}(\kappa, z)\}_{n=0}^\infty$, where 
\begin{equation*}%\label{SG_hh1}
 P_n^{-*}(\kappa,z)=\frac{1}{R_{n-1}(\kappa)}
 \begin{vmatrix}
 R_{n-1}(\kappa) &R_n(\kappa)\\
 P_{n-1}(z) &P_n(z)
 \end{vmatrix},
\end{equation*}
and $R_n(z)$ is defined by equation $(\ref{def:Rn})$, exists}. If, in addition, we have a complex number $\kappa_2$ and another complex number $s_0^{**}=\mathcal{L}^{-**}(1)$ such that $s_0^{**}P_{n-1}^{-*}(\kappa_1,\kappa_2)+Q_{n-1}^{-*}(\kappa_1,\kappa_2)\ne 0$ for $n=1,2,3,\dots$, we see that the polynomials
\begin{equation}
\resizebox{\textwidth}{!}{$
P^{-**}_n(\kappa_1,\kappa_2, z) = \frac{
 \begin{vmatrix}
 s_0^{**}P_{n-1}^{-*}(\kappa_1,\kappa_2)+Q_{n-1}^{-*}(\kappa_1,\kappa_2) &s_0^{**}P_n^{-*}(\kappa_1,\kappa_2)+Q_n^{-*}(\kappa_1,\kappa_2)\\
 P_{n-1}^{-*}(\kappa_1,z) &P_n^{-*}(\kappa_1, z)
 \end{vmatrix}}
 {s_0^{**}P_{n-1}^{-*}(\kappa_1,\kappa_2)+Q_{n-1}^{-*}(\kappa_1,\kappa_2)}
 $}
\end{equation}
are correctly defined for any nonnegative integer $n$ and so they are orthogonal with respect to $\mathcal{L}^{-**}=(\mathcal{L}^{-*})^{-*}$.
\tc{To conclude we are going to formulate a statement about iterations of Geronimus transformations corresponding to the evolution
\[
d\mu(t)\to\frac{d\mu(t)}{t-\kappa}\to\frac{d\mu(t)}{|t-\kappa|^2}
\]
and it will be used in Section 6.}
\begin{theorem}\label{IteratedGeronimus}\tc{
Given a linear functional of the form \eqref{functional:IntRepr}, let $\kappa\in\dC_{\pm}$ and $\displaystyle{s_0^*=\int_a^b\dfrac{d\mu(t)}{t-\kappa}}$. Then the corresponding polynomials $P_n^{-*}(\kappa, z)$ are correctly defined for each $n$ and they are orthogonal with respect to the complex-valued measure $d\mu(t)/(t-\kappa)$. Also, if we further set $\displaystyle{s_0^{**}=\int_{a}^{b}\frac{d\mu(t)}{|t-\kappa|^2}}$, the resulting iterated polynomials $P_n^{-**}(\kappa, \overline{\kappa}, z)$ are correctly defined for each $n$ and they are orthogonal with respect to the positive measure $d\mu(t)/|t-\kappa|^2$.
}
\end{theorem}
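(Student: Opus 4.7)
The plan rests on a single identity: for real $t$,
\[
\frac{d\mu(t)}{t-\kappa} \;=\; (t-\overline{\kappa})\,\frac{d\mu(t)}{|t-\kappa|^{2}},
\]
which presents the complex measure $d\mu/(t-\kappa)$ as the Christoffel transform at the nonreal point $\overline{\kappa}$ of the positive measure $d\nu := d\mu(t)/|t-\kappa|^{2}$. I would use this identity to reduce both claims of the theorem to a single application of the classical Christoffel theorem together with a short bilinear-form argument.

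First I would verify that the prescribed values of $s_{0}^{*}$ and $s_{0}^{**}$ realize the measures named in the statement. Substituting $s_{0}^{*}=\int d\mu/(t-\kappa)$ into the bilinear-form representation \eqref{eq:bilinear} annihilates the rank-one correction, so $\mathcal{L}^{-*}[p]=\int p(t)\,d\mu(t)/(t-\kappa)$; the analogous computation with $s_{0}^{**}=\int d\mu/|t-\kappa|^{2}$ identifies $\mathcal{L}^{-**}$ with integration against $d\nu$. Then I would invoke the classical Christoffel theorem for the positive-definite functional attached to $d\nu$: its monic OPS $\{p_{n}\}_{n=0}^{\infty}$ has only real zeros, hence $p_{n}(\overline{\kappa})\neq 0$ for every $n$, and the theorem supplies the monic OPS
\[
\tilde P_{n}(t)=\frac{1}{t-\overline{\kappa}}\left[p_{n+1}(t)-\frac{p_{n+1}(\overline{\kappa})}{p_{n}(\overline{\kappa})}\,p_{n}(t)\right]
\]
for $\mathcal{L}^{-*}$, in particular establishing quasi-definiteness of $\mathcal{L}^{-*}$.

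To match $\tilde P_{n}$ with the paper's formula $P_{n}^{-*}(\kappa,z)=P_{n}(z)+A_{n}P_{n-1}(z)$, $A_{n}=-R_{n}(\kappa)/R_{n-1}(\kappa)$, I must rule out $R_{m}(\kappa)=0$ for every $m\geq 0$. Suppose the contrary with $m\geq 1$ minimal. Then $\mathcal{L}^{-*}[P_{m}]=s_{0}^{*}R_{m}(\kappa)=0$, and writing any $q$ of degree $\leq m$ as $q(t)=q(\kappa)+(t-\kappa)r(t)$ with $\deg r<m$ yields
\[
\mathcal{L}^{-*}[P_{m}q] = q(\kappa)\,\mathcal{L}^{-*}[P_{m}] + \mathcal{L}^{-*}[(t-\kappa)P_{m}r] = 0 + \mathcal{L}[P_{m}r] = 0,
\]
using the defining identity $\mathcal{L}^{-*}[(t-\kappa)p]=\mathcal{L}[p]$ and the $\mathcal{L}$-orthogonality of $P_{m}$ to polynomials of degree $<m$. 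Thus $P_{m}$ lies in the radical of the bilinear form associated with $\mathcal{L}^{-*}$ on polynomials of degree $\leq m$, forcing the corresponding Hankel determinant to vanish, contradicting the quasi-definiteness just established. Consequently $R_{m}(\kappa)\neq 0$ for every $m$, so $P_{n}^{-*}$ is well-defined and, by uniqueness of the monic OPS, coincides with $\tilde P_{n}$; orthogonality with respect to $d\mu/(t-\kappa)$ is then immediate.

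For the second iteration, $\mathcal{L}^{-**}$ is integration against the positive measure $d\nu$, hence positive-definite, and its monic OPS is precisely $\{p_{n}\}$; so $P_{n}^{-**}(\kappa,\overline{\kappa},z)=p_{n}(z)$ is correctly defined for every $n$. The only nontrivial step is the non-vanishing of $R_{m}(\kappa)$: a direct algebraic verification looks awkward because $s_{0}^{*}$ is complex, but the radical/quasi-definiteness argument above transfers existence cleanly across the Christoffel/Geronimus duality encoded in the opening identity.
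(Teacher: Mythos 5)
Your proof is correct, but it reaches the one genuinely nontrivial point --- the non-vanishing $s_0^*P_{n}(\kappa)+Q_{n}(\kappa)\neq 0$, i.e. $R_n(\kappa)\neq 0$ --- by a different route than the paper. The paper argues directly: if $\int_a^b P_{n-1}(t)\,\frac{d\mu(t)}{t-\kappa}=0$, then the $d\mu$-orthogonality of $P_{n-1}$ upgrades this inductively to $\int_a^b P_{n-1}(t)t^k\,\frac{d\mu(t)}{t-\kappa}=0$ for $k\le n-1$, hence $\int_a^b P_{n-1}^2(t)\,\frac{d\mu(t)}{t-\kappa}=0$, and taking imaginary parts gives $\int_a^b P_{n-1}^2(t)\,\frac{d\mu(t)}{|t-\kappa|^2}=0$, contradicting positivity; the second iteration is then treated exactly as you treat it, by noting via \eqref{eq:bilinear} that $\mathcal{L}^{-**}$ is integration against the finite positive measure $d\mu(t)/|t-\kappa|^2$. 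You instead use the identity $\frac{d\mu(t)}{t-\kappa}=(t-\overline{\kappa})\,\frac{d\mu(t)}{|t-\kappa|^2}$ to recognize $\mathcal{L}^{-*}$ as the Christoffel transform at $\overline{\kappa}$ of the positive measure $d\nu=d\mu/|t-\kappa|^2$, obtain quasi-definiteness of $\mathcal{L}^{-*}$ from the classical kernel-polynomial theorem (the OPS of $d\nu$ has real zeros, so it cannot vanish at $\overline{\kappa}$), and then convert quasi-definiteness into $R_m(\kappa)\neq 0$ by the radical/Hankel argument: $\mathcal{L}^{-*}[P_m]=s_0^*R_m(\kappa)=0$ together with $\mathcal{L}^{-*}[(t-\kappa)p]=\mathcal{L}[p]$ would place the coefficient vector of $P_m$ in the kernel of the Hankel matrix $(\mathcal{L}^{-*}[z^{i+j}])_{i,j=0}^m$, which is a correct and complete contradiction. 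What each approach buys: the paper's argument is shorter and entirely elementary (one positivity estimate), while yours makes the Christoffel--Geronimus duality explicit, explains structurally why the second iteration lands on a positive-definite functional with $P_n^{-**}(\kappa,\overline{\kappa},z)$ equal to the OPS of $d\nu$, and produces the quasi-definiteness of $\mathcal{L}^{-*}$ (the content of the remark complementing Theorem \ref{thm:RegGer}) as an intermediate step rather than a by-product. Two details worth stating explicitly in your write-up: first, $s_0^*\neq 0$, since $\Im s_0^*=\Im\kappa\int_a^b\frac{d\mu(t)}{|t-\kappa|^2}\neq 0$, which you need both to form $R_n=P_n+\frac{1}{s_0^*}Q_n$ and to pass from $\mathcal{L}^{-*}[P_m]=0$ to $R_m(\kappa)=0$; second, at the second step you apply the representation \eqref{eq:bilinear} with a complex (not positive) starting measure, which is harmless because the proof of that representation is purely algebraic, but deserves a sentence --- the paper relies on the same extension without comment.
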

\begin{remark}
\tc{
It should be emphasized here that if $\kappa\in\dC_{\pm}$ then $\displaystyle{s_0^*=\int_a^b\dfrac{d\mu(t)}{t-\kappa}}\in\dC_{\pm}$ and therefore the statement about $P_n^{-*}(\kappa, z)$ complements Theorem \ref{thm:RegGer}.
}
\end{remark}
\begin{proof}
\tc{
By contradiction, assume that $P_n^{-*}(\kappa, z)$ cannot be defined for some $n$, which is equivalent to 
\begin{equation*}
    s_0^*P_{n-1}(\kappa)+Q_{n-1}(\kappa)=0,
\end{equation*}
which using \eqref{def:Qn} can be rewritten as
\[
\int_a^bP_{n-1}(t)\dfrac{d\mu(t)}{t-\kappa}=0.
\] 
Furthermore, since $P_{n-1}$ is orthogonal with respect to $d\mu$, by induction we get that
\[
\int_a^bP_{n-1}(t)t^k\dfrac{d\mu(t)}{t-\kappa}=0, \quad k=0,1,\dots, n-1.
\]
Therefore,
\[
\int_a^bP_{n-1}^2(t)\dfrac{d\mu(t)}{t-\kappa}=0
\]
or taking the imaginary part
\[
\int_a^bP_{n-1}^2(t)\dfrac{d\mu(t)}{|t-\kappa|^2}=0,
\]
which is impossible. As a result, $P_n^{-*}(\kappa, z)$ is defined for each $n$ and according to \eqref{eq:bilinear} the orthogonality measure is  $d\mu(t)/(t-\kappa)$. Finally, if we then set $\displaystyle{s_0^{**}=\int_{a}^{b}\frac{d\mu(t)}{|t-\kappa|^2}}$, \eqref{eq:bilinear} shows that the measure is $d\mu(t)/|t-\kappa|^2$ and thus the polynomials $P_n^{-**}(\kappa, \overline{\kappa}, z)$ are correctly defined since the measure is finite and positive-definite. 
}
\end{proof}
It is worth pointing out that one can also consider so-called multiple Geronimus transformations that lead to more general orthogonal systems \cite{DM14}, \cite{DGM14} such as Sobolev orthogonal polynomials and derive similar results.

\section{Symmetric Jacobi matrices}

Here we analyze the underlying complex symmetric Jacobi matrices and their spectra. In particular, we show that the Christoffel transformation at $\kappa$ is isospectral and that the Geronimus transformation at $\kappa$ adds the nonreal $\kappa$ to the spectrum.

At first, for the convenience of the reader, recall that for an OPS $\{P_n(z)\}_{n=0}^\infty$, the normalized polynomials
\[
\hat{P}_n(z)=\frac{P_n(z)}{\sqrt{\lambda_1}\sqrt{\lambda_2}\dots\sqrt{\lambda_{n+1}}},
\]
\tc{where one can, in fact, take either of the two values of the square root of the complex number ${\lambda_j}$,}
satisfy the relation 
\begin{equation}\label{SymTriTermNSec5}
a_k\hat{P}_{k+1}(z)+b_k\hat{P}_k(z)+a_{k-1}\hat{P}_{k-1}(z)=z\hat{P}_k(z), \quad k=1,2,3\dots,
\end{equation}
with the initial conditions
\begin{equation}\label{InConNSec5}
\hat{P}_0(z)=1,\quad \hat{P}_1(z)=(z-b_0)/a_1.
\end{equation}
Then the symmetric Jacobi matrix
\begin{equation}\label{JacOpSec5}
J=\begin{pmatrix}
b_0 & a_0 & 0 & \cdots   \\
     a_0 & b_1 & a_1 &   \\
     0 & a_1 & b_2 & \ddots  \\
     \vdots & & \ddots & \ddots
\end{pmatrix}
\end{equation}
is the matrix representation of the operator of the multiplication by $z$
\begin{equation}\label{MatrixFormSec5}
Jp(z)=zp(z),
\end{equation}
where $p(z)=(\hat{P}_0(z), \hat{P}_1(z), \hat{P}_2(z),\dots)^{\top}$. We will say that $J$ corresponds to the underlying \tc{quasi-definite linear }functional $\mathcal{L}$. In the standard way, such a Jacobi matrix generates a closed linear operator acting in the Hilbert space $\ell^2$ and we will also denote this operator by $J$. \tc{Note that this operator $J$ is Hermitian in case $\mathcal{L}$ is positive-definite and otherwise is non-Hermitian.} Next, let us consider the following form of the $LU$-factorization of the tridiagonal matrix $J-\kappa I$ 
\begin{equation}\label{symLU}
J-\kappa I = \sL(\kappa) D(\kappa) \sL^{\top}(\kappa),
\end{equation}
where $D(\kappa)=\diag(d_0(\kappa),d_1(\kappa),....)$ is a diagonal matrix and $\sL\tc{(\kappa)}$ is a lower bidiagonal matrix 
\[      
\sL\tc{(\kappa)} =\begin{pmatrix}
1 & 0 & 0 & \cdots   \\
     v_0\tc{(\kappa)} & 1 & 0 &   \\
     0 & v_1\tc{(\kappa)} & 1 & \ddots  \\
     \vdots & & \ddots & \ddots
\end{pmatrix}
\] \tc{ whose entries depend on $\kappa$}. 
Comparing the entries of \eqref{symLU} gives
\begin{equation}\label{LU_entries1}
d_0(\kappa)=b_0-\kappa, \quad    
      d_j(\kappa)v_j(\kappa) = a_{j} , \quad
      d_{j+1}(\kappa)=b_{j+1}-\kappa-d_j(\kappa)v_j^2(\kappa).
\end{equation}
%Invoking
\tc{Writing} \eqref{SymTriTermNSec5} in the form
\[
a_j\frac{\hat{P}_{j+1}(\kappa)}{\hat{P}_j(\kappa)}=\kappa-b_j-a_{j-1}
\frac{\hat{P}_{j-1}(\kappa)}{\hat{P}_{j}(\kappa)},
\]
we get 
\begin{equation}\label{LU_entries}
d_j(\kappa)=-a_j\frac{\hat{P}_{j+1}(\kappa)}{\hat{P}_j(\kappa)},\quad 
v_j(\kappa) =-
\frac{\hat{P}_{j}(\kappa)}{\hat{P}_{j+1}(\kappa)}.
\end{equation}
In fact, it is now more convenient to have \eqref{symLU} as follows
\begin{equation}\label{symLU_C}
J-\kappa I = L(\kappa) L^{\top}(\kappa)
\end{equation}
with
\begin{equation}\label{DefC}
L=\sL D^{1/2}=\begin{pmatrix}
\sqrt{d_0} & 0 & 0 & \cdots   \\
     v_0\sqrt{d_0} & \sqrt{d_1} & 0 &   \\
     0 & v_1\sqrt{d_1} & \sqrt{d_2} & \ddots  \\
     \vdots & & \ddots & \ddots
\end{pmatrix},
\end{equation}
where one can actually take either of the two values of the square root of the complex number ${d_j}$. \tc{In order to relate this construction to the Christoffel transformation discussed before, we are going to formulate and to prove a standard result (e.g. see \cite{BM04}) in the explicit form where we emphasize the existence condition.} 
\begin{proposition}\label{symDarbouxTr}
Let $J$ be a Jacobi matrix corresponding to a positive-definite \tc{linear} functional $\mathcal{L}$ and let $\kappa\in\dC_\pm$. Then the factorization
\begin{equation}\label{DarFac}
J-\kappa I=LL^{\top}
\end{equation}
exists and the tridiagonal matrix 
\[
{J}_{C}(\kappa)=L^{\top}L+\kappa I
\]
corresponds to the quasi-definite functional $\mathcal{L}^*$ \tc{and thus $J_C(\kappa)$ is a symmetrization of the monic Jacobi matrix $J^*_m(\kappa)$.} 
\end{proposition}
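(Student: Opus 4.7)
The plan is to prove the two claims in order: first the existence of the factorization \eqref{DarFac}, then the identification of $L^\top L + \kappa I$ as a symmetrization of $J^*_m(\kappa)$. The only real obstacle is ensuring that the pivots $d_j(\kappa)$ in the decomposition \eqref{symLU} are all nonzero; once this is established, the rest is a direct algebraic verification via the three-term recurrence.

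For existence I would invoke positive-definiteness of $\mathcal{L}$: the zeros of each $P_n(z)$, and hence of $\hat{P}_n(z)$, are real, so $\hat{P}_n(\kappa) \neq 0$ for every $n \geq 0$ whenever $\kappa \in \dC_\pm$. Formulas \eqref{LU_entries} then give pivots $d_j(\kappa) = -a_j \hat{P}_{j+1}(\kappa)/\hat{P}_j(\kappa) \neq 0$, so choosing any branch of $\sqrt{d_j(\kappa)}$ and defining $L$ as in \eqref{DefC} produces the desired factorization $J - \kappa I = LL^\top$ by writing $D = D^{1/2}(D^{1/2})^\top$ in \eqref{symLU}.

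To identify $L^\top L + \kappa I$ with the symmetrization of $J^*_m(\kappa)$, I would compute entry by entry. The matrix is symmetric tridiagonal with diagonal $d_j(\kappa) + v_j^2(\kappa)d_j(\kappa) + \kappa$ and super-diagonal $v_{j-1}(\kappa)\sqrt{d_{j-1}(\kappa)d_j(\kappa)}$. Returning to unnormalized polynomials via $\hat{P}_n = P_n/\sqrt{\lambda_1 \cdots \lambda_{n+1}}$ and $a_k = \sqrt{\lambda_{k+2}}$, one finds
\[
a_j \frac{\hat{P}_{j+1}(\kappa)}{\hat{P}_j(\kappa)} = \frac{P_{j+1}(\kappa)}{P_j(\kappa)}, \qquad a_j \frac{\hat{P}_j(\kappa)}{\hat{P}_{j+1}(\kappa)} = \frac{\lambda_{j+2}P_j(\kappa)}{P_{j+1}(\kappa)}.
\]
Substituting these into the diagonal and using \eqref{def:rec2} at $z=\kappa$ to eliminate the factor $\lambda_{j+2}P_j(\kappa)/P_{j+1}(\kappa)$, one arrives at $c_{j+2} - P_{j+1}(\kappa)/P_j(\kappa) + P_{j+2}(\kappa)/P_{j+1}(\kappa)$, which is exactly $c^*_{j+1}(\kappa)$ from \eqref{lAndcFormulasCT}. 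An analogous substitution on the super-diagonal yields $\bigl((L^\top L)_{j-1,j}\bigr)^2 = \lambda_{j+1} P_{j+1}(\kappa)P_{j-1}(\kappa)/P_j^2(\kappa) = \lambda^*_{j+1}(\kappa)$, again matching \eqref{lAndcFormulasCT}.

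Since the symmetrization of the monic Jacobi matrix $J^*_m(\kappa)$ of the OPS $\{P_n^*(\kappa,z)\}_{n=0}^\infty$ has precisely these diagonal and off-diagonal entries, $L^\top L + \kappa I$ is that symmetrization, and hence corresponds to $\mathcal{L}^*$.
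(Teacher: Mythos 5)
Your proof is correct, but the second half follows a genuinely different route from the paper's. The existence part — the zeros of the $P_n$ are real, so $\hat P_j(\kappa)\neq 0$ for $\kappa\in\dC_\pm$, hence the pivots in \eqref{LU_entries} are nonzero and $J-\kappa I=\sL D\sL^\top=LL^\top$ — is exactly the paper's argument. For the identification, however, the paper never computes the entries of $L^\top L$: it uses the intertwining relation $(J-\kappa I)p(t)=(t-\kappa)p(t)\Rightarrow J_C(\kappa)\,L^\top p(t)=t\,L^\top p(t)$ and then renormalizes, setting $\widetilde p(t)=(t-\kappa)^{-1}L^\top p(t)$ to restore the initial condition $\widetilde P_0=1$; the entries of $\widetilde p$ come out proportional to the kernel polynomials $P^*_n(\kappa,z)$, which is why $J_C(\kappa)$ corresponds to $\mathcal{L}^*$. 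You instead verify entrywise that the diagonal of $L^\top L+\kappa I$ equals $c^*_{j+1}(\kappa)$ and that the squares of its off-diagonal entries equal the $\lambda^*_{j+1}(\kappa)$ of \eqref{lAndcFormulasCT}; your two conversion identities between $\hat P_j$ and $P_j$ and the use of \eqref{def:rec2} at $z=\kappa$ are correct, including the index bookkeeping. This is a legitimate alternative: it is more computational and leans on the Section 3 formulas \eqref{lAndcFormulasCT} (valid here because $P_n(\kappa)\neq 0$), whereas the paper's argument avoids all entry computations, exhibits the transformed orthogonal polynomials explicitly as $(t-\kappa)^{-1}L^\top p(t)$, and is reused verbatim for the Geronimus factorization in Proposition \ref{symGeronimusTr}. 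One small point you should state explicitly: matching only the \emph{squares} of the off-diagonal entries suffices because a symmetrization of a monic Jacobi matrix is determined only up to the choice of square-root branches of the $\lambda^*_j$, as the paper notes at the beginning of Section 5.
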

\begin{proof}
From \eqref{LU_entries} we conclude that the factorization exists if and only if $\hat{P}_j(\kappa)\ne 0$ for all admissible $j$, which is true since the polynomials $\hat{P}_j(z)$'s correspond to a positive-definite functional and hence they have only real zeros. Next, using \eqref{MatrixFormSec5} one can see that 
\[
\begin{split}
(J-\kappa I)p(t)=(t-\kappa)p(t)\Rightarrow LL^{\top}p(t)=(t-\kappa)p(t)\Rightarrow\\
\Rightarrow (L^{\top}L)L^{\top}p(t)=(t-\kappa)L^{\top}p(t)\Rightarrow 
{J}_C(\kappa)L^{\top}p(t)=tL^{\top}p(t).
\end{split}
\]
The latter relation suggests that $L^{\top}p(t)$ should be a vector of the orthogonal polynomials corresponding to ${J}_C(\kappa)$. However, the entries of $L^{\top}p(t)$ vanish at $t=\kappa$ and so $L^{\top}p(t)$ doesn't satisfy the proper initial condition, which is $\widetilde{P}_0=1$.
Nevertheless, introducing
\[
\widetilde{p}(t)=\frac{1}{t-\kappa}L^{\top}p(t)=(\widetilde{P}_0(t), \widetilde{P}_1(t), \dots)^{\top}
\]
resolves the issue and it leads to the polynomials
\[
\widetilde{P}_j(t)=\sqrt{d_j(\kappa)}\frac{\hat{P}_j(t)-\frac{\hat{P}_j(\kappa)}{\hat{P}_{j+1}(\kappa)}\hat{P}_{j+1}(t)}{t-\kappa}
\]
which are proportional to $P_n^*(\kappa,z)$ and so they are orthogonal with respect to $\mathcal{L}^*$.
\end{proof}
Evidently, the matrix $L$ also defines a closed linear operator on $\ell^2$ and it turns out that this operator, which will be denoted by $L$ as well, is bounded.
\begin{proposition}\label{Lbound}
Let $J$ be a Jacobi matrix corresponding to a positive-definite \tc{linear} functional $\mathcal{L}$. If $J$ is a bounded operator, that is, the two sequences $a_n$ and $b_n$ are bounded, then $L$ is a bounded operator. 
\end{proposition}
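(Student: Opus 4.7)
The plan is to prove that $L$ is bounded by establishing that both the diagonal entries $\sqrt{d_j(\kappa)}$ and the subdiagonal entries $v_j(\kappa)\sqrt{d_j(\kappa)}$ of the bidiagonal matrix in \eqref{DefC} form bounded sequences in $j$. Once this is done, $L$ decomposes on $\ell^2$ as the sum of a bounded diagonal operator and a bounded weighted shift, which yields the result. From \eqref{LU_entries1} one has $v_j(\kappa)\sqrt{d_j(\kappa)}=a_j/\sqrt{d_j(\kappa)}$, so the whole question reduces to estimating $|d_j(\kappa)|$ from above and away from zero uniformly in $j$.

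The upper bound is the easier step. Rewriting \eqref{SymTriTermNSec5} as $a_j\hat{P}_{j+1}/\hat{P}_j=(\kappa-b_j)-a_{j-1}\hat{P}_{j-1}/\hat{P}_j$ and using \eqref{LU_entries} to write $d_j(\kappa)=-a_j\hat{P}_{j+1}(\kappa)/\hat{P}_j(\kappa)$ yields
\[
d_j(\kappa)=\kappa-b_j-a_{j-1}\frac{\hat{P}_{j-1}(\kappa)}{\hat{P}_j(\kappa)}.
\]
Since $\hat{P}_{j-1}/\hat{P}_j=a_{j-1}P_{j-1}/P_j$ and Proposition~\ref{prop:zeros} gives $|P_{j-1}(\kappa)/P_j(\kappa)|\le 1/|\Im\kappa|$, and since $\sup_j|a_j|$ and $\sup_j|b_j|$ are finite, $|d_j(\kappa)|$ is uniformly bounded above by a constant depending only on $\kappa$, $\sup_j|a_j|$, and $\sup_j|b_j|$.

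The main obstacle is the uniform lower bound $|d_j(\kappa)|\ge|\Im\kappa|>0$, without which the subdiagonal entries $a_j/\sqrt{d_j(\kappa)}$ could blow up. I would extract it from the imaginary part alone by a one-line propagation argument. Since $\hat{P}_{j+1}/\hat{P}_j=P_{j+1}/(a_j P_j)$, we have $d_j(\kappa)=-P_{j+1}(\kappa)/P_j(\kappa)$, and the recurrence \eqref{def:rec} gives
\[
\Im\!\left(\frac{P_{j+1}(\kappa)}{P_j(\kappa)}\right)=\Im\kappa-\lambda_{j+1}\,\Im\!\left(\frac{P_{j-1}(\kappa)}{P_j(\kappa)}\right).
\]
For $\kappa\in\dC_+$ one has $\lambda_{j+1}>0$ and $\Im(P_{j-1}(\kappa)/P_j(\kappa))<0$ by Proposition~\ref{prop:zeros}(i), so the two terms on the right share the same sign and $\Im(P_{j+1}(\kappa)/P_j(\kappa))\ge\Im\kappa$; the case $\kappa\in\dC_-$ is analogous. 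Consequently $|d_j(\kappa)|\ge|\Im d_j(\kappa)|\ge|\Im\kappa|$, which combined with the upper bound gives $|a_j/\sqrt{d_j(\kappa)}|\le\sup_j|a_j|/\sqrt{|\Im\kappa|}$, so the diagonal and subdiagonal of $L$ are bounded sequences and the proof is complete.
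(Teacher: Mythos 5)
Your argument is correct and, at heart, follows the paper's strategy: since $L$ has only two nonzero diagonals, it suffices to bound its entries, and this is done via the ratio estimates of Proposition \ref{prop:zeros}. The difference lies in how you treat the subdiagonal. The paper writes the subdiagonal entry as $v_j\sqrt{d_j}$ with $v_j(\kappa)=-a_jP_j(\kappa)/P_{j+1}(\kappa)$, so both factors are bounded directly (the modulus bound $|P_j(\kappa)/P_{j+1}(\kappa)|\le 1/|\Im\kappa|$ coming from the partial fraction decomposition used in the proof of part (iii)); no lower bound on $|d_j|$ is ever needed. You instead write the subdiagonal entry as $a_j/\sqrt{d_j(\kappa)}$ and prove the uniform lower bound $|d_j(\kappa)|\ge|\Im\kappa|$ by propagating imaginary parts through the three-term recurrence, using $\lambda_{j+1}>0$ and Proposition \ref{prop:zeros}(i). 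That detour is valid and self-contained (it uses only the stated inequalities of Proposition \ref{prop:zeros} rather than the modulus bound hidden in its proof), but it is equivalent to the inequality $|P_j(\kappa)/P_{j+1}(\kappa)|\le 1/|\Im\kappa|$ that the paper invokes, so it only lengthens the route. Two small slips to fix: your displayed formula for $d_j(\kappa)$ has the wrong sign --- since $d_j(\kappa)=-a_j\hat P_{j+1}(\kappa)/\hat P_j(\kappa)$, it should read $d_j(\kappa)=b_j-\kappa+a_{j-1}\hat P_{j-1}(\kappa)/\hat P_j(\kappa)$ --- which is harmless for the modulus estimate; and the bound $|P_{j-1}(\kappa)/P_j(\kappa)|\le 1/|\Im\kappa|$ is not literally the statement of Proposition \ref{prop:zeros}(i) (which controls only the imaginary part) but is established inside the proof of part (iii), so the citation should point there.
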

\begin{proof}
Since $L$ is a banded matrix, that is, it has only two nonzero diagonals, it suffices to show that the entries of $L$ are bounded. To this end, note that the nonzero elements of $L$ are expressed in terms of the two sequences
\[
d_j(\kappa)=-a_j\frac{\hat{P}_{j+1}(\kappa)}{\hat{P}_j(\kappa)}=-\frac{{P}_{j+1}(\kappa)}{{P}_j(\kappa)},\quad
v_j(\kappa) =-\frac{\hat{P}_{j}(\kappa)}{\hat{P}_{j+1}(\kappa)}=-a_j\frac{{P}_{j}(\kappa)}{{P}_{j+1}(\kappa)}.
\]
As a result, the nonzero entries of $L$ are bounded due to \eqref{DefC} and statement (iii) of Proposition \ref{prop:zeros}.
\end{proof}
\begin{remark}
It is worth mentioning here that for some real $\kappa$ the decomposition \eqref{DarFac} exists but the corresponding $L$ is unbounded (see \cite{D15})
\end{remark}

To give another flavor to Proposition \ref{symDarbouxTr}, recall that the $m$-function or Weyl function of the Jacobi operator $J$ is the function
\begin{equation*}
 m(J;z)= ((J-zI)^{-1}e_0,e_0)_{\ell^2},
\end{equation*}
where $e_0=(1,0,\dots,0,\dots)^\top\in\ell^2$. It is well known and is not so hard to see that the function $m(J;z)$ is holomorphic on the resolvent set $\rho(J)$ of the operator $J$. Moreover, if $J$ is bounded we have
\begin{equation}\label{mAsympt}
 m(J;z)=-\sum_{j=0}^{\infty}\frac{(J^je_0,e_0)_{\ell^2}}{z^{j+1}},\quad (|z|>\Vert J\Vert).
\end{equation}
Note that if the underlying functional $\mathcal{L}$ is normalized in the way that $\mathcal{L}(1)=1$ then 
\[
\mathcal{L}(z^j)=(J^je_0,e_0)_{\ell^2}, \quad  j=0,1,2, \dots
\]
(see \cite{B01} for more details).
Since the functionals $\mathcal{L}$ and $\mathcal{L}^*$ are related, there is a simple formula that also relates the $m$-functions of $J$ and $J_C(\kappa)$, \tc{which is a way to show that $J_C(\kappa)$ corresponds to $(x-\kappa)d\mu(x)$, where $d\mu$ generates $J$, based on the defintion of $J_C(\kappa)$ given in this section.}
\begin{proposition}\label{prop:JandJc}
Let $J$ be a Jacobi matrix corresponding to a positive-definite \tc{linear} functional $\mathcal{L}$. If $J$ is bounded then 
\begin{equation}\label{formula:JandJc}
m(J_C(\kappa);z)=\frac{1}{b_0-\kappa}\left[(z-\kappa)m(J;z)+1\right] 
\end{equation}
for all complex numbers $z$, whose absolute value $|z|$ is sufficiently large.
\end{proposition}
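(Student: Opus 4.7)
The plan is to translate the identity into an equality of Laurent series at infinity via the moment expansion \eqref{mAsympt}. Since $J$ is bounded by hypothesis and $L$ is bounded by Proposition \ref{Lbound}, the operator $J_C(\kappa)=L^{\top}L+\kappa I$ is bounded as well. Hence both $m(J;z)$ and $m(J_C(\kappa);z)$ admit convergent expansions of the form \eqref{mAsympt} in a common region $\{z:|z|>\max(\|J\|,\|J_C(\kappa)\|)\}$, so it is enough to verify the identity coefficient-by-coefficient there.

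First, I would set $s_j=\mathcal{L}(z^j)=(J^je_0,e_0)_{\ell^2}$, noting that $s_0=1$ and $s_1=(Je_0,e_0)_{\ell^2}=b_0$. By Proposition \ref{symDarbouxTr}, $J_C(\kappa)$ corresponds to the linear functional $\mathcal{L}^*$; since the expansion coefficients in \eqref{mAsympt} are the moments of the \emph{normalized} functional (so that $s_0=1$), the moments attached to $J_C(\kappa)$ are
\[
t_j:=(J_C(\kappa)^je_0,e_0)_{\ell^2}=\frac{\mathcal{L}^*(z^j)}{\mathcal{L}^*(1)}=\frac{s_{j+1}-\kappa s_j}{b_0-\kappa},
\]
where the last equality uses $\mathcal{L}^*(z^j)=\mathcal{L}[(z-\kappa)z^j]=s_{j+1}-\kappa s_j$ together with $\mathcal{L}^*(1)=s_1-\kappa s_0=b_0-\kappa$.

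Plugging this into \eqref{mAsympt} gives
\[
m(J_C(\kappa);z)=-\sum_{j=0}^{\infty}\frac{t_j}{z^{j+1}}=\frac{-1}{b_0-\kappa}\left[\sum_{j=0}^{\infty}\frac{s_{j+1}}{z^{j+1}}-\kappa\sum_{j=0}^{\infty}\frac{s_j}{z^{j+1}}\right].
\]
A reindexing $k=j+1$ in the first sum yields $\sum_{k=1}^{\infty}s_k/z^k = z\bigl(-m(J;z)-s_0/z\bigr)=-z\,m(J;z)-1$, while the second sum equals $-m(J;z)$ by \eqref{mAsympt}. Substituting these back produces exactly \eqref{formula:JandJc}.

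The argument is essentially elementary once the Laurent expansion is available. The only delicate point, and the main thing to get right, is the normalization: the expansion coefficients of $m(J_C(\kappa);z)$ are the moments of $\mathcal{L}^*/\mathcal{L}^*(1)$ rather than of $\mathcal{L}^*$ itself, which is precisely the origin of the prefactor $1/(b_0-\kappa)$ in the identity.
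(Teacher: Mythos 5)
Your argument is correct, and it reaches the same target as the paper --- matching the Laurent coefficients of both sides of \eqref{formula:JandJc} at infinity via \eqref{mAsympt} --- but it obtains the key coefficient identity by a different mechanism. The paper proves, by induction on $j$ using the factorization $J-\kappa I=LL^{\top}$ and $J_C(\kappa)=L^{\top}L+\kappa I$, the operator identity $J^{j+1}-\kappa J^{j}=LJ_C(\kappa)^{j}L^{\top}$, and then evaluates at $e_0$ using $L^{\top}e_0=\sqrt{d_0}\,e_0$ with $d_0=b_0-\kappa$; this yields \eqref{MomRel} directly at the operator level, without ever invoking the moment interpretation of $J_C(\kappa)$. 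You instead use Proposition \ref{symDarbouxTr} (that $J_C(\kappa)$ corresponds to $\mathcal{L}^*$) together with the fact quoted from \cite{B01} that for a bounded complex Jacobi matrix corresponding to a normalized quasi-definite functional one has $\mathcal{L}'(z^j)=(J'^{\,j}e_0,e_0)_{\ell^2}$, and then compute $\mathcal{L}^*(z^j)=s_{j+1}-\kappa s_j$ from the definition of the Christoffel transform; the reindexing of the two series is carried out correctly and the prefactor $1/(b_0-\kappa)$ comes out of the normalization $\mathcal{L}^*(1)=b_0-\kappa$, exactly as you flag. The trade-off: your route is more elementary and makes the origin of the formula transparent (it is literally the statement $d\mu\to(x-\kappa)d\mu$ at the level of moments), but it leans on the moment--matrix correspondence for the non-Hermitian matrix $J_C(\kappa)$, which is an external citation applied through Proposition \ref{symDarbouxTr}; the paper's operator induction is self-contained, stays entirely inside the $LU$-factorization framework, and is the pattern reused later for $J_G(\kappa)$. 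Your boundedness of $J_C(\kappa)$ via Proposition \ref{Lbound} is fine and equivalent to the paper's appeal to Proposition \ref{prop:zeros} and \eqref{lAndcFormulasCT}.
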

\begin{proof}
It is worth starting by stressing that if $J$ is bounded then Proposition \ref{prop:zeros} and formulas \eqref{lAndcFormulasCT} give the boundedness of $J_C(\kappa)$. Thus, $m(J_C(\kappa);z)$ and $m(J;z)$ are holomorphic in a neighborhood of $\infty$. Now, we are to prove that the asymptotic expansions of the right-hand and left-hand sides of \eqref{formula:JandJc} coincide and this yields the desired result. In other words, we need to show 
\begin{equation}\label{MomRel}
  \Big((J^{j+1}-\kappa J^j)e_0,e_0\Big)_{\ell^2}=
(b_0-\kappa)(J_C(\kappa)^je_0,e_0)_{\ell^2}, \quad j=0,1,2, \dots.  
\end{equation}
To this end, we are going to prove by induction that 
\[
J^{j+1}-\kappa J^j=LJ_C(\kappa)^jL^\top, \quad j=0,1,2, \dots,
\]
which is evident when $j=0$. Next, assuming the relation is true for $j=k$, we get
\[
\begin{split}
  J^{k+2}-\kappa J^{k+1}&=J(J^{k+1}-\kappa J^{k})=JLJ_C(\kappa)^jL^\top=
(LL^\top+\kappa I)LJ_C(\kappa)^kL^\top\\
&=L(L^\top L+\kappa I)J_C(\kappa)^kL^\top=LJ_C(\kappa)^{k+1}L^\top,  
\end{split}
\]
which shows the validity of \eqref{MomRel}.
Finally, it remains to observe that
\[
(LJ_C(\kappa)^{j}L^\top e_0,e_0)_{\ell^2}=(J_C(\kappa)^{j}L^\top e_0,\overline{L^\top e_0})_{\ell^2}=(b_0-\kappa)(J_C(\kappa)^{j} e_0,e_0)_{\ell^2}
\]
since $L^\top e_0=\sqrt{d_0}e_0$.
\end{proof}

The above observations allow us to prove that the Christoffel transformation at $\kappa$ preserves the spectrum $\sigma(J)$ of the underlying Jacobi operator $J$.
\begin{theorem}\label{Isospectral}
If $J$ is a bounded operator then for any $\kappa\in\dC_\pm$ we have
\[
\sigma(J)=\sigma({J}_C(\kappa)).
\]
\end{theorem}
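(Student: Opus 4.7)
The plan is to identify $J$ and $J_C(\kappa)$ as similar operators on $\ell^2$, so that equality of spectra follows automatically. By construction $J - \kappa I = LL^{\top}$ and $J_C(\kappa) - \kappa I = L^{\top}L$, hence it suffices to show $\sigma(L^{\top}L) = \sigma(LL^{\top})$, and the cleanest way to get this is via a similarity through $L$ itself.

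First I would check that $L$ is a bijection on $\ell^2$ with bounded inverse. Boundedness of $L$ is Proposition \ref{Lbound}. For injectivity, note that the diagonal entries of $L$ in \eqref{DefC} are $\sqrt{d_j(\kappa)}$, and by \eqref{LU_entries} we have $d_j(\kappa) = -a_j \hat P_{j+1}(\kappa)/\hat P_j(\kappa)$; since $\mathcal{L}$ is positive-definite, the zeros of each $\hat P_j$ are real, so $\hat P_j(\kappa)\neq 0$ for the nonreal $\kappa$, and in particular every diagonal entry of $L$ is nonzero. The lower bidiagonal form then makes $Lx=0$ force $x_0 = x_1 = \cdots = 0$ recursively. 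For surjectivity, observe that since $\mathcal{L}$ is positive-definite, $J$ is a bounded self-adjoint operator on $\ell^2$, so $\sigma(J)\subseteq \dR$ and $\kappa\notin\sigma(J)$; therefore $LL^{\top} = J - \kappa I$ is invertible, which forces the range of $L$ to be all of $\ell^2$. Thus $L$ is a bounded bijection, and the bounded inverse theorem provides a bounded $L^{-1}$.

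With $L$ invertible, $L^{\top}$ is invertible as well, with inverse $(L^{-1})^{\top}$. The identity
\[
L^{\top}L = L^{-1}(LL^{\top})L
\]
then exhibits $L^{\top}L$ as a similarity transform of $LL^{\top}$, so $\sigma(L^{\top}L) = \sigma(LL^{\top})$. Shifting by $\kappa I$ yields $\sigma(J_C(\kappa)) = \sigma(J)$, which is the desired conclusion.

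The step I expect to require the most care is establishing surjectivity of $L$. Injectivity is immediate from the bidiagonal structure with nonzero diagonal, but the matrix of $L^{-1}$ is full lower triangular with entries that could in principle grow, so there is no direct way to read off surjectivity (or boundedness of the inverse) from the entries. The trick is that we do not have to: the already known invertibility of $LL^{\top} = J - \kappa I$ hands us surjectivity of $L$ immediately, and the bounded inverse theorem then supplies the boundedness of $L^{-1}$ without any explicit estimate.
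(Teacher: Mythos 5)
Your argument is correct, but it follows a genuinely different route from the paper. The paper applies the general fact $\sigma(AB)\setminus\{0\}=\sigma(BA)\setminus\{0\}$ for bounded $A,B$ (citing \cite{Barnes98}) to $A=L$, $B=L^{\top}$, which only yields $\sigma(J_C(\kappa))\setminus\{\kappa\}=\sigma(J)$, and then rules out $\kappa\in\sigma(J_C(\kappa))$ by a separate analytic argument: if $\kappa$ were in the spectrum, $m(J_C(\kappa);z)$ would have a pole at $\kappa$ by \cite[Theorem 2.14]{B01}, contradicting the relation \eqref{formula:JandJc} between the two $m$-functions. You instead upgrade $L$ from bounded (Proposition \ref{Lbound}) to boundedly invertible --- injectivity from the lower bidiagonal structure with nonzero diagonal entries $\sqrt{d_j(\kappa)}$ in \eqref{DefC}, surjectivity from $\operatorname{ran}L\supseteq\operatorname{ran}(LL^{\top})=\ell^2$ since $J-\kappa I$ is invertible for self-adjoint $J$ and nonreal $\kappa$, and then the bounded inverse theorem --- so that $J_C(\kappa)=L^{-1}JL$ and the spectral equality follows in one stroke. (One small point you use implicitly, as does the paper: the entrywise factorization $J-\kappa I=LL^{\top}$ is an identity of bounded operators, which is legitimate because both factors are bounded banded matrices.) Your approach buys a self-contained proof that avoids the Barnes lemma, the $m$-function relation, and the need to continue \eqref{formula:JandJc} analytically up to $\kappa$, and it delivers the strictly stronger conclusion that $J_C(\kappa)$ is similar to the self-adjoint operator $J$. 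The paper's two-step template buys uniformity: the identical scheme transfers verbatim to the Geronimus transformation, where the analogous factor $U$ cannot be boundedly invertible --- indeed there $\kappa$ genuinely enters $\sigma(J_G(\kappa))$ --- so a similarity argument of your type is unavailable, and only the final $m$-function step changes.
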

\begin{proof}
It is well known that for bounded operators $A$ and $B$, we have
\[
\sigma(AB)\setminus\{0\}=\sigma(BA)\setminus\{0\}
\]
(for example see \cite{Barnes98}). This fact, Proposition \ref{Lbound}, and the fact that $\sigma(J)$ cannot have nonreal numbers in it immediately give
\[
\sigma({J}_C(\kappa))\setminus\{\kappa\}=\sigma(J).
\]
Finally, by way of contradiction, assume that $\kappa\in\sigma({J}_C(\kappa))$. Then, it follows from \cite[Theorem 2.14]{B01} that $m(J_C(\kappa);z)$ has a pole at $\kappa$, which is impossible due to \eqref{formula:JandJc}. Thus,  $\kappa\not\in\sigma({J}_C(\kappa))$.
\end{proof}
\begin{example}
\tc{ Let us consider the symmetrization of the monic Jacobi matrix introduced in Example \ref{FibJac}, that is, the symmetric complex Jacobi matrix $J$, whose entries are
\[
a_{n}=\frac{1}{2}\sqrt{\frac{(-1)^{n}}{F_{n+1}^2}+1}, \quad
b_{n}=i\frac{(-1)^n}{2F_{n}F_{n+1}}, \quad n=0, 1, 2, \dots.
\]
Theorem \ref{Isospectral} then implies that $\sigma(J)=[-1,1]$. Note that in this case the Jacobi matrix is in the Nevai class and using the usual spectral tool, Weyl's theorem, one can only conclude that $\sigma_{ess}(J)=[-1,1]$.
}
\end{example}
Now, let us find a $UL$-factorization of the tridiagonal matrix $J-\kappa I$
\begin{equation}\label{symUL}
J-\kappa I = \cU(\kappa) \cD(\kappa) \cU^{\top}(\kappa),
\end{equation}
where $\cD(\kappa)=\diag({t}_0(\kappa),t_1(\kappa),....)$ is a diagonal matrix and $\cU\tc{(\kappa)}$ is an upper bidiagonal matrix 
\[      
\cU\tc{(\kappa)}=\begin{pmatrix}
u_0\tc{(\kappa)}& 1 & 0 & \cdots   \\
0   & u_1\tc{(\kappa)} & 1 &   \\
 0 & 0 & u_2\tc{(\kappa)} & \ddots  \\
     \vdots & & \ddots & \ddots
\end{pmatrix}
\] \tc{whose entries depend on $\kappa$}.
The decomposition \eqref{symUL} yields the relations
\begin{equation}\label{UL_entries1}
t_{j+1}(\kappa)=b_{j}-\kappa-t_j(\kappa)u_j^2(\kappa), \quad
t_{j+1}(\kappa)u_{j+1}(\kappa) = a_{j}, \quad j=0, 1, 2,\dots,
\end{equation}
which resemble \eqref{LU_entries1} but are fundamentally different. Namely, the first relation $t_{1}(\kappa)=b_{0}-\kappa-t_0(\kappa)u_0^2(\kappa)$ has a free parameter in it. To be definite, let $t_0=1$ and $u_0^2=1/s_0^*$.
Then, invoking \eqref{def:Rn} we get
\begin{equation}\label{UL_entries}
t_j(\kappa)=-a_{j-1}\frac{\hat{R}_{j}(\kappa)}{\hat{R}_{j-1}(\kappa)},\quad 
u_j(\kappa)=-
\frac{\hat{R}_{j-1}(\kappa)}{\hat{R}_{j}(\kappa)}.
\end{equation}
As before, we can rewrite \eqref{symUL} in the following manner
\begin{equation}\label{symUL_G}
J-\kappa I = U(\kappa) U^{\top}(\kappa)
\end{equation}
with
\begin{equation}\label{DefG}
U=\cU\cD^{1/2}=\begin{pmatrix}
u_0\sqrt{t_0} & \sqrt{t_1} & 0 & \cdots   \\
     0& u_1\sqrt{t_1} & \sqrt{t_2} &   \\
     0 & 0 & u_2\sqrt{t_2} & \ddots  \\
     \vdots & & \ddots & \ddots
\end{pmatrix},
\end{equation}
where one can take either of the two values of the square root of the complex number ${t_j}$. \tc{Now we will proceed similarly to the case of Christoffel transformation.} 
\begin{proposition}\label{symGeronimusTr}
Let $J$ be a Jacobi matrix corresponding to a positive-definite \tc{linear} functional $\mathcal{L}$. Also, let $\kappa\in\dC_{\pm}$ and let $s_0^*=\mathcal{L}^{-*}(1)\in\overline{\dC}_{\mp}\setminus\{0\}$. Then the factorization
\begin{equation}\label{GerFac}
J-\kappa I=UU^{\top}
\end{equation}
exists and the tridiagonal matrix 
\[
{J}_{G}(\kappa)=U^{\top}U+\kappa I
\]
corresponds to the quasi-definite functional $\mathcal{L}^{-*}$ 
\end{proposition}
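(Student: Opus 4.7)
The plan is to mirror the proof of Proposition \ref{symDarbouxTr}, with the $LU$-factorization replaced by the $UL$-factorization \eqref{symUL_G}. First I would establish existence of the factorization $J-\kappa I = UU^{\top}$. From \eqref{DefG} and \eqref{UL_entries} the entries of $U$ involve $\hat{R}_{j-1}(\kappa)/\hat{R}_j(\kappa)$, so the factorization exists as long as $\hat{R}_j(\kappa)\ne 0$ for every $j\ge 0$. This is exactly the content of Theorem \ref{thm:RegGer} under the hypotheses $\kappa\in\dC_\pm$ and $s_0^*\in\overline{\dC}_\mp\setminus\{0\}$, so the first step is immediate.

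Next I would transfer the action of $J$ through the factorization. Applying \eqref{MatrixFormSec5} and \eqref{GerFac},
\[
UU^{\top} p(t) = (t-\kappa)\,p(t) \quad\Longrightarrow\quad U^{\top}U\,(U^{\top}p(t)) = (t-\kappa)\,U^{\top}p(t),
\]
hence $J_G(\kappa)\bigl(U^{\top}p(t)\bigr) = t\,U^{\top}p(t)$. The crucial difference from the Christoffel case is that $U^{\top}$ is \emph{lower} bidiagonal, so the components $\bigl(U^{\top}p(t)\bigr)_j = \sqrt{t_j}\bigl(\hat{P}_{j-1}(t)+u_j\hat{P}_j(t)\bigr)$ (for $j\ge 1$) are already polynomials of degree $j$; no division by $t-\kappa$ is required, in contrast to the argument used in Proposition \ref{symDarbouxTr}.

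The third step is identification. Substituting $u_j=-\hat{R}_{j-1}(\kappa)/\hat{R}_j(\kappa)$ from \eqref{UL_entries} and rewriting in terms of monic polynomials, each entry $\bigl(U^{\top}p(t)\bigr)_j$ becomes a nonzero scalar multiple of
\[
P_j^{-*}(\kappa,t) = P_j(t) - \frac{R_j(\kappa)}{R_{j-1}(\kappa)}\,P_{j-1}(t),
\]
the monic Geronimus-transformed polynomial from Section 4. For the initial entry, $t_0=1$ and $u_0^2=1/s_0^*$ give $\bigl(U^{\top}p(t)\bigr)_0 = 1/\sqrt{s_0^*}$, which is exactly the orthonormal normalization with respect to $\mathcal{L}^{-*}$ since $\mathcal{L}^{-*}(1)=s_0^*$. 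Therefore $U^{\top}p(t)$ is, up to a uniform scalar, the vector of orthonormal polynomials associated with $\mathcal{L}^{-*}$, and $J_G(\kappa)$ is the symmetric Jacobi matrix corresponding to that functional.

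The main obstacle is bookkeeping: tracking the factors $\prod\sqrt{\lambda_k}$ that interconvert monic polynomials $P_n$, $R_n$, $P_n^{-*}$ and their normalized counterparts while simultaneously juggling the two allowable choices for each $\sqrt{t_j}$ in \eqref{DefG}. One must verify that a single consistent choice of square roots produces the same Jacobi matrix and the same orthonormal polynomials (up to an overall sign), and in particular that the proportionality constants between $\bigl(U^{\top}p(t)\bigr)_j$ and $\hat{P}_j^{-*}(\kappa,t)$ do not vanish, which again reduces to Theorem \ref{thm:RegGer}.
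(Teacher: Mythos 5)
Your argument is correct and follows essentially the same route as the paper's proof: existence of the factorization via the formulas \eqref{UL_entries} together with Theorem \ref{thm:RegGer} (i.e.\ $R_j(\kappa)\neq 0$), then the intertwining ${J}_G(\kappa)U^{\top}p(t)=tU^{\top}p(t)$, and finally the identification of the entries of $U^{\top}p(t)$ as nonzero multiples of $P_n^{-*}(\kappa,t)$. Your added observations — that no division by $t-\kappa$ is needed since $U^{\top}$ is lower bidiagonal, and the check of the zeroth entry $u_0=1/\sqrt{s_0^*}$ — are accurate refinements of details the paper leaves implicit.
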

\begin{proof}
Using \eqref{MatrixFormSec5} and \eqref{GerFac} one can see that 
\[
\begin{split}
(J-\kappa I)p(t)=(t-\kappa)p(t)\Rightarrow UU^{\top}p(t)=(t-\kappa)p(t)\Rightarrow\\
\Rightarrow (U^{\top}U)U^{\top}p(t)=(t-\kappa)U^{\top}p(t)\Rightarrow 
{J}_G(\kappa)U^{\top}p(t)=tU^{\top}p(t).
\end{split}
\]
Note that the entries of $U^{\top}p(t)$ are proportional to $P_n^{-*}(\kappa,z)$ and so they are orthogonal with respect to $\mathcal{L}^{-*}$. Thus,  ${J}_{G}(\kappa)$ corresponds to $\mathcal{L}^{-*}$.
\end{proof}
As before, the matrix $U$ also defines a closed linear operator on $\ell^2$ and it turns out that this operator, which will denote by $U$ as well, is bounded.
\begin{proposition}
Let $J$ be a Jacobi matrix corresponding to a positive-definite \tc{linear} functional $\mathcal{L}$. Also, let $\kappa\in\dC_{\pm}$ and let $s_0^*=\mathcal{L}^{-*}(1)\in\overline{\dC}_{\mp}\setminus\{0\}$. If $J$ is a bounded operator then $U$ is a bounded operator. 
\end{proposition}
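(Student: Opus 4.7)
The plan is to mirror the argument used for $L$ in Proposition \ref{Lbound}, reducing the operator-norm question to a pointwise estimate on the entries and then importing the ratio bounds already established in the proof of Proposition \ref{UboundP}. Since $U$ has only two nonzero diagonals, the same banded-matrix argument used for $L$ reduces the claim to showing that the nonzero entries of $U$ form bounded sequences.

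Next, I would read off the nonzero entries of $U$ from \eqref{DefG} and rewrite them in terms of the polynomials $R_n$ rather than their normalizations $\hat{R}_n$. Using \eqref{UL_entries} together with $a_{n-1}=\sqrt{\lambda_{n+1}}$, the factor $\sqrt{t_j}$ reduces (up to a choice of square root) to an expression involving $R_j(\kappa)/R_{j-1}(\kappa)$, and $u_j\sqrt{t_j}$ to one involving $R_{j-1}(\kappa)/R_j(\kappa)$. Thus it suffices to show that both sequences
\[
\frac{R_n(\kappa)}{R_{n-1}(\kappa)}, \qquad \frac{R_{n-1}(\kappa)}{R_n(\kappa)}
\]
are bounded.

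For this, the key input is the identity \eqref{FormForRatio}, which expresses $R_{n+1}(\kappa)/R_n(\kappa)$ as the ratio $P_{n+1}(\kappa)/P_n(\kappa)$ times a quotient built from $s_0^*+Q_n(\kappa)/P_n(\kappa)$ and $s_0^*+Q_{n+1}(\kappa)/P_{n+1}(\kappa)$. By Proposition \ref{prop:zeros}(iii) the sequences $P_{n+1}(\kappa)/P_n(\kappa)$ and $P_n(\kappa)/P_{n+1}(\kappa)$ are bounded since $J$ is bounded. By the Markov theorem \cite[Theorem 2.6.2]{Ismail09}, $Q_n(\kappa)/P_n(\kappa)\to w$ with $w\in\dC_{\mp}$; combined with the hypothesis $s_0^*\in\overline{\dC}_{\mp}\setminus\{0\}$, this gives $s_0^*+w\ne 0$, and Theorem \ref{thm:RegGer} ensures $s_0^*+Q_{n}(\kappa)/P_{n}(\kappa)\ne 0$ for every $n$. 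Hence the quotient factor in \eqref{FormForRatio} and its reciprocal are bounded, so both $R_n(\kappa)/R_{n-1}(\kappa)$ and $R_{n-1}(\kappa)/R_n(\kappa)$ are bounded.

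Combining these observations, every nonzero entry of $U$ is bounded, and since $U$ is banded this yields boundedness of $U$ as an operator on $\ell^2$. The only subtle point—and the one requiring the hypothesis $s_0^*\in\overline{\dC}_{\mp}\setminus\{0\}$—is to rule out the possibility that a denominator in \eqref{FormForRatio} is arbitrarily close to zero along a subsequence, which is precisely why the Markov-theorem limit together with the location of $s_0^*$ is needed.
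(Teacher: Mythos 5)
Your proposal is correct and follows essentially the same route as the paper: the paper's proof simply invokes the banded-matrix argument of Proposition \ref{Lbound} together with the boundedness of the ratios of consecutive $R_j$'s established in the proof of Proposition \ref{UboundP} (via \eqref{FormForRatio}, Proposition \ref{prop:zeros}, the Markov theorem, and $s_0^*+w\ne 0$), which is exactly the chain of estimates you spell out. Your explicit remark about keeping the denominators in \eqref{FormForRatio} away from zero is just an unpacking of what the paper leaves implicit.
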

\begin{proof}
The result follows from the reasoning given in Proposition \ref{Lbound} and the fact that the ratios of two consecutive polynomials $R_j$'s are bounded (see the proof of Proposition \ref{UboundP}).
\end{proof}
The functionals $\mathcal{L}$ and $\mathcal{L}^{-*}$ are related and so are the $m$-functions of $J$ and $J_G(\kappa)$.
\begin{proposition}
Let $J$ be a Jacobi matrix corresponding to a positive-definite \tc{linear} functional $\mathcal{L}$. Also, let $\kappa\in\dC_{\pm}$ and let $s_0^*=\mathcal{L}^{-*}(1)\in\overline{\dC}_{\mp}\setminus\{0\}$. If $J$ is bounded then 
\begin{equation}\label{formula:JandJg}
m(J_G(\kappa);z)=\frac{1}{s_0^*}\frac{m(J;z)}{z-\kappa}-\frac{1}{z-\kappa}
\end{equation}
for all complex numbers $z$, whose absolute value $|z|$ is sufficiently large.
\end{proposition}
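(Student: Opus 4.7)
The plan is to mirror the strategy of Proposition~\ref{prop:JandJc}, but via resolvents rather than moment asymptotics. In the Christoffel setting $L^{\top}e_0 = \sqrt{d_0}\,e_0$ had only one nonzero entry, which is why the moment route worked cleanly; here, however, $U^{\top}e_0 = (1/\sqrt{s_0^*})\,e_0 + \sqrt{t_1}\,e_1$ has two nonzero components, so matching the Taylor coefficients of $m(J_G(\kappa);z)$ and $(1/s_0^*)m(J;z)/(z-\kappa)-1/(z-\kappa)$ directly becomes awkward. What saves us is that the first column of $U$ itself still reduces to $U_{00}e_0$ with $U_{00}=u_0\sqrt{t_0}=1/\sqrt{s_0^*}$, and this is the single structural fact I will exploit.

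First I would check that $U$, and therefore $J_G(\kappa)=U^{\top}U+\kappa I$, is bounded on $\ell^2$: this is done exactly as in Proposition~\ref{Lbound}, using the boundedness of $R_n(\kappa)/R_{n-1}(\kappa)$ established in the proof of Proposition~\ref{UboundP}. Consequently, both $m(J;z)$ and $m(J_G(\kappa);z)$ are holomorphic in a neighborhood of $\infty$, and it suffices to verify the identity for $|z|$ large enough that $J-zI$ and $J_G(\kappa)-zI$ are boundedly invertible. From $J-\kappa I = UU^{\top}$ and $J_G(\kappa)-\kappa I = U^{\top}U$, I would then derive the intertwining
\[
U^{\top}(J-zI) = (J_G(\kappa)-zI)\,U^{\top}
\]
by subtracting $(z-\kappa)U^{\top}$ from both sides of $U^{\top}(J-\kappa I) = U^{\top}UU^{\top} = (J_G(\kappa)-\kappa I)U^{\top}$. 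Inverting, multiplying by $U$ on the right, and using $U^{\top}U = J_G(\kappa)-\kappa I = (J_G(\kappa)-zI)+(z-\kappa)I$ yields the operator identity
\[
U^{\top}(J-zI)^{-1}U \;=\; I + (z-\kappa)\,(J_G(\kappa)-zI)^{-1}.
\]

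To finish, I would take the $(0,0)$ matrix entry of both sides. Because $U$ is upper bidiagonal, only $U_{00}$ survives in its first column, so
\[
\bigl(U^{\top}(J-zI)^{-1}U\bigr)_{00} = U_{00}^{2}\,\bigl((J-zI)^{-1}\bigr)_{00} = \frac{1}{s_0^*}\,m(J;z),
\]
while the right-hand side contributes $1+(z-\kappa)\,m(J_G(\kappa);z)$. Rearranging produces the claimed formula. The main point that needs care is the boundedness step, which legitimizes the resolvent manipulations on $\ell^2$; once that is in place the rest is direct algebra, and in particular no delicate moment-by-moment matching (as was needed in the Christoffel case) arises, since the single-entry structure of the first column of $U$ kills all off-diagonal contributions at stage four.
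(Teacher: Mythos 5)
Your argument is correct, but it is not the route the paper takes. The paper's own proof is a two-line reduction: since $J_G(\kappa)-\kappa I=U^{\top}U$ with $U^{\top}$ lower triangular and $UU^{\top}+\kappa I=J$, the matrix $J$ is itself the Christoffel transformation of $J_G(\kappa)$ at $\kappa$, so the paper simply substitutes $J\to J_G$, $J_C\to J$ in \eqref{formula:JandJc} and uses $b_0(J_G)-\kappa=u_0^2t_0=1/s_0^*$ to solve for $m(J_G(\kappa);z)$; the analytic work is thus inherited from Proposition \ref{prop:JandJc}, whose proof matched moments via the inductive identity $J^{j+1}-\kappa J^j=LJ_C(\kappa)^jL^{\top}$. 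You instead prove the formula from scratch at the resolvent level: the intertwining $U^{\top}(J-zI)=(J_G(\kappa)-zI)U^{\top}$, the sandwich identity $U^{\top}(J-zI)^{-1}U=I+(z-\kappa)(J_G(\kappa)-zI)^{-1}$, and the $(0,0)$ entry, using exactly the structural fact ($Ue_0=U_{00}e_0$, $U_{00}^2=1/s_0^*$) that the paper's role swap exploits in the guise of $b_0(J_G)-\kappa=1/s_0^*$. Your version buys two things: it avoids any induction or coefficient-by-coefficient comparison of the expansions \eqref{mAsympt}, and it sidesteps the (unremarked) subtlety that Proposition \ref{prop:JandJc} is formally stated only for a $J$ coming from a positive-definite functional, whereas the paper applies it with $J_G(\kappa)$, a complex Jacobi matrix, in the role of $J$; your resolvent identity needs only the factorization and the boundedness of $U$, which you correctly secure via Propositions \ref{Lbound} and \ref{UboundP}, so it is valid for all $|z|>\max(\Vert J\Vert,\Vert J_G(\kappa)\Vert)$, which is all the statement asks. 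The paper's route, in exchange, is shorter on the page and makes the Christoffel--Geronimus duality explicit. Your observation that $U^{\top}e_0$ has two nonzero entries, making a naive moment-matching awkward, is accurate; both remedies (your use of $Ue_0$ inside the sandwich, the paper's swap of the roles of the factors) come to the same thing.
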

\begin{proof}
This statement is another form of Proposition \ref{prop:JandJc}. Namely, substituting $J\to J_G$ and $J_C\to J$ into \eqref{formula:JandJc} we get \eqref{formula:JandJg}
by taking into account that
\[
b_0(J_G)-\kappa=\dfrac{1}{s_0^*}.
\]
\end{proof}
Now we are in the position to prove the result about the spectrum of $J_G(\kappa)$ that corresponds to the Geronimus transformation at $\kappa$.
\begin{theorem}
Let $J$ be a Jacobi matrix corresponding to a positive-definite \tc{linear} functional $\mathcal{L}$. Also, let $\kappa\in\dC_{\pm}$ and let $s_0^*=\mathcal{L}^{-*}(1)\in\overline{\dC}_{\mp}\setminus(\{0\}\cup\{m(J;\kappa)\})$.
If $J$ is a bounded operator then 
\[
\sigma({J}_G(\kappa))=\sigma(J)\cup\{\kappa\}.
\]
\end{theorem}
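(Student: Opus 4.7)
The plan is to mimic the proof of Theorem~\ref{Isospectral}, replacing the $LU$-decomposition by the $UL$-decomposition, and then use the $m$-function formula~\eqref{formula:JandJg} to settle the behaviour of $J_G(\kappa)$ at the extra point $\kappa$.

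First, I would invoke Proposition~\ref{symGeronimusTr} to write $J-\kappa I=UU^{\top}$ and $J_G(\kappa)-\kappa I=U^{\top}U$, both of which are bounded operators on $\ell^2$ by the preceding proposition. Applying the general fact $\sigma(AB)\setminus\{0\}=\sigma(BA)\setminus\{0\}$ (used in the Christoffel case) to $A=U$, $B=U^{\top}$ gives
\[
\sigma(J-\kappa I)\setminus\{0\}=\sigma(J_G(\kappa)-\kappa I)\setminus\{0\},
\]
and a shift by $\kappa$ yields $\sigma(J)\setminus\{\kappa\}=\sigma(J_G(\kappa))\setminus\{\kappa\}$. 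Since $J$ comes from a positive-definite functional, $J$ is self-adjoint so $\sigma(J)\subset\dR$; as $\kappa\in\dC_{\pm}$, in fact $\sigma(J)\setminus\{\kappa\}=\sigma(J)$. This already gives the inclusions $\sigma(J)\subseteq\sigma(J_G(\kappa))\subseteq\sigma(J)\cup\{\kappa\}$.

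It then remains only to verify that $\kappa\in\sigma(J_G(\kappa))$, which is the step where the hypothesis $s_0^{*}\ne m(J;\kappa)$ is used. Rewriting \eqref{formula:JandJg} as
\[
m(J_G(\kappa);z)=\frac{m(J;z)-s_0^{*}}{s_0^{*}(z-\kappa)},
\]
I would note that the right-hand side, which is initially established only for large $|z|$, extends by analytic continuation to all of $\rho(J)\setminus\{\kappa\}$ since $m(J;\cdot)$ is holomorphic on $\rho(J)\ni\kappa$. Because $m(J;\kappa)-s_0^{*}\ne 0$ by hypothesis, the extended formula has a simple pole at $z=\kappa$. Hence $m(J_G(\kappa);z)$ cannot be holomorphic at $\kappa$, and by \cite[Theorem~2.14]{B01} (used identically in the proof of Theorem~\ref{Isospectral}) this forces $\kappa\in\sigma(J_G(\kappa))$. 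Combined with the first step, this proves the claimed equality.

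The only subtle point is the pole argument: one has to be sure that the identity \eqref{formula:JandJg}, a priori asymptotic, genuinely propagates the singular behaviour of $\frac{1}{z-\kappa}$ into $m(J_G(\kappa);z)$ rather than allowing it to cancel. This is exactly controlled by the condition $s_0^{*}\ne m(J;\kappa)$; were that condition to fail, the numerator would vanish at $\kappa$ and the pole would be removable, in which case $\kappa$ would drop out of $\sigma(J_G(\kappa))$.
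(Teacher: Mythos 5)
Your proposal is correct and follows essentially the same route as the paper: the $UL$-factorization together with $\sigma(AB)\setminus\{0\}=\sigma(BA)\setminus\{0\}$ yields $\sigma(J_G(\kappa))\setminus\{\kappa\}=\sigma(J)$, and formula \eqref{formula:JandJg} combined with the hypothesis $s_0^*\ne m(J;\kappa)$ forces $\kappa\in\sigma(J_G(\kappa))$. One cosmetic remark: for the last step you only need that the $m$-function is holomorphic on the resolvent set (stated in Section 5), not \cite[Theorem 2.14]{B01}, which the paper invokes in Theorem \ref{Isospectral} for the opposite implication (spectral point $\Rightarrow$ pole).
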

\begin{proof}
Analogously to the proof of Theorem \ref{Isospectral}, we get
\[
\sigma({J}_G(\kappa))\setminus\{\kappa\}=\sigma(J).
\]
Notice if we had $\kappa\in\rho({J}_G(\kappa))$ then $m(J_G(\kappa);z)$ given by \eqref{formula:JandJg} would be holomorphic at $\kappa$ but that could only happen when $s_0^*=m(J;\kappa)$, which is excluded by the assumptions.
\end{proof}
In conclusion note that it was not essential for us to start with a Jacobi matrix that corresponded to a positive-definite \tc{linear} functional. Most of the results can be adapted to just the case when the corresponding transformation exists and thus it gives rise to a number of iterations starting with a real Jacobi matrix It is even possible to proceed when the decomposition \eqref{DarFac} (or \eqref{GerFac}) does not exist, see \cite{DD11}). In principle, one can generalize some results that hold for real Jacobi matrices to the case of complex ones. For instance, one can mimic the operator proof of \cite[Theorem 2.1]{Simon04} for the complex case, which, in a way, was done in \cite{B01}. However, one has to replace the spectrum of an operator with the numerical range of the operator, which is essentially larger than the spectrum. At the same time, the results of this section along with the results derived in Sections 3 and 4 show that for $J_C$, $J_G$ and their iterations we can still get the results such as ratio asymptotic just outside the spectrum. Furthermore, for such complex Jacobi matrices one can construct reasonable functional calculus (e.g. see \cite[Theorem 7]{Barnes98}). 

\section{$R_I$- and $R_{II}$-recurrence relations}

In this section we will show how Darboux transformations can lead to $R_I$- and $R_{II}$-recurrence relations, which were introduced in \cite{IsmailMasson95} and were shown to be related to bi-orthogonal rational functions. \tc{Also, the results of this section provides with a different approach to the findings from \cite{DZh09} as well as extend some those.}

To begin with, note that the polynomial of degree $n+1$
\begin{equation}\label{f:quasi-orthogonal1}
 T_{n+1}(z)= P_{n+1}(z)+\tilde{A}_{n+1}P_n(z)   
\end{equation}
is orthogonal to the monomials $1$, $z$, \dots, $z^{n-1}$ for any choice of $\tilde{A}_{n+1}\in\dC$. Recall that such a polynomial is called quasi-orthogonal of order $1$. The following statement gives a relation among quasi-orthogonal polynomials, orthogonal polynomials, and kernel polynomials.
\begin{proposition}\label{thrm:R1_relation} Let $\{P_n(z)\}_{n=0}^\infty$ be a monic OPS with respect to a quasi-definite linear functional $\mathcal{L}$ and let $T_{n+1}(z)$ be a quasi-orthogonal polynomial of order $1$ that has the form \eqref{f:quasi-orthogonal1}. Assume that the entire sequence $\{P^*_n(\kappa,z)\}_{n=0}^\infty$ of the corresponding kernel polynomials exists for some $\kappa\in\dC_\pm$. Then there
exist unique sequences of constants $\alpha_n$ and $\beta_n$ such that
\begin{equation}\label{R1-relation}
    T_{n+1}(z)-(z-\alpha_n)P_n(z)+\beta_n(z-\kappa)P^*_{n-1}(\kappa,z)=0.
\end{equation}
\end{proposition}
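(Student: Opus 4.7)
The plan is to unfold every object appearing in the proposed identity into the basis $\{P_n(z), P_{n-1}(z)\}$ and then use linear independence to read off $\alpha_n$ and $\beta_n$ uniquely.

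First I would rewrite $(z-\kappa)P^{*}_{n-1}(\kappa,z)$ using the defining formula for the kernel polynomials, namely
\[
(z-\kappa)P^{*}_{n-1}(\kappa,z) \;=\; P_n(z)-\frac{P_n(\kappa)}{P_{n-1}(\kappa)}P_{n-1}(z).
\]
Note that this uses that $P_{n-1}(\kappa)\ne0$, which is part of the hypothesis that the entire sequence $\{P^{*}_n(\kappa,z)\}_{n=0}^{\infty}$ exists. Next, I would substitute this together with the three-term recurrence $P_{n+1}(z)=(z-c_{n+1})P_n(z)-\lambda_{n+1}P_{n-1}(z)$ and the definition $T_{n+1}(z)=P_{n+1}(z)+\tilde A_{n+1}P_n(z)$ into the left-hand side of \eqref{R1-relation}. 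After cancellation of the $zP_n(z)$ contributions coming from $T_{n+1}$ and from $(z-\alpha_n)P_n(z)$, every remaining term lies in the $\dC$-span of $P_n(z)$ and $P_{n-1}(z)$.

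Collecting the coefficients of $P_n(z)$ and $P_{n-1}(z)$ yields the linear system
\[
\alpha_n+\beta_n \;=\; c_{n+1}-\tilde A_{n+1},\qquad \beta_n\,\frac{P_n(\kappa)}{P_{n-1}(\kappa)}\;=\;-\lambda_{n+1}.
\]
Since $P_n(z)$ and $P_{n-1}(z)$ are linearly independent, both coefficients must vanish, and since $P_n(\kappa)\ne0$ the second equation determines $\beta_n$ uniquely, after which the first determines $\alpha_n$ uniquely. Explicitly,
\[
\beta_n \;=\; -\lambda_{n+1}\frac{P_{n-1}(\kappa)}{P_n(\kappa)},\qquad \alpha_n \;=\; c_{n+1}-\tilde A_{n+1}+\lambda_{n+1}\frac{P_{n-1}(\kappa)}{P_n(\kappa)}.
\]

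There is no real obstacle here beyond bookkeeping; the proof is a direct algebraic identity, and the only non-formal input is the non-vanishing of $P_n(\kappa)$ for all $n$, which is exactly what is encoded in the standing assumption that $\{P^{*}_n(\kappa,z)\}_{n=0}^{\infty}$ exists in its entirety. One might remark afterwards that the formulas for $\alpha_n,\beta_n$ match, as expected, the entries \eqref{lAndcFormulasCT} of $J_m^{*}$ up to the shift $\tilde A_{n+1}$, which makes \eqref{R1-relation} a natural "mixed" three-term relation between the two OPSs.
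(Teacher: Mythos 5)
Your proof is correct and follows essentially the same route as the paper: expand $T_{n+1}$ and $(z-\kappa)P^*_{n-1}(\kappa,z)$ in terms of $P_{n+1},P_n,P_{n-1}$, eliminate $P_{n+1}$ via the three-term recurrence, and match coefficients of the linearly independent $P_n,P_{n-1}$, which yields exactly the paper's formulas $\beta_n=-\lambda_{n+1}P_{n-1}(\kappa)/P_n(\kappa)$ and $\alpha_n=c_{n+1}-\tilde A_{n+1}+\lambda_{n+1}P_{n-1}(\kappa)/P_n(\kappa)$. Your explicit remark on uniqueness via linear independence and on the non-vanishing of $P_n(\kappa)$ encoded in the existence of the kernel polynomials is a welcome, slightly more complete bit of bookkeeping than the paper spells out.
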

\begin{proof}
Using the definitions of kernel and quasi-orthogonal polynomials yields
\begin{multline}
    T_{n+1}(z)-(z-\alpha_n)P_n(z)+\beta_n(z-\kappa)P^*_{n-1}(\kappa,z)=\\
    =P_{n+1}(z)\tc{-}\left( z-\tilde{A}_{n+1}-\alpha_n-\beta_n\right)P_n(z)-\beta_n \frac{P_n(\kappa)}{P_{n-1}(\kappa)}P_{n-1}(z),
\end{multline}
where $P_{n-1}(\kappa)\ne 0$ since the kernel polynomials exist for any nonnegative integer $n$. Next, one can rewrite \eqref{def:rec2} as follows  
\[
P_{n+1}(z)-(z-c_{n+1})P_n(z)+\lambda_{n+1}P_{n-1}(z)=0.
\]
Hence, putting
\[
\beta_n=-\frac{\lambda_{n+1}P_{n-1}(\kappa)}{P_n(\kappa)} , \quad \alpha_n=c_{n+1}-\tilde{A}_{n+1}+\lambda_{n+1}\frac{P_{n-1}(\kappa)}{P_n(\kappa)}
\]
one arrives at the desired relation \eqref{R1-relation}.
\end{proof} 
In particular, we can apply Proposition \ref{thrm:R1_relation} to the case where the polynomials $T_{n+1}(z)$ correspond to the Geronimus transformation at some point.
\begin{corollary}\label{col:R_I}
 Let $\{P_n(z)\}_{n=0}^\infty$ be a monic OPS with respect to a \tc{positive-definite} linear functional $\mathcal{L}$ and let $\{P^*_n(\kappa_1,z)\}_{n=0}^\infty$ be the corresponding kernel polynomials for some $\kappa_1\in\dC_\pm$. Assume that $\kappa_2\in\dC_\pm$ is chosen so that the entire sequence $\{P^{-*}_n(\kappa_2, z)\}_{n=0}^\infty$ corresponding to the Geronimus transformation at $\kappa_2$ exists. Then we have that
 \begin{equation}\label{f:R1}
     P^{-*}_{n+1}(\kappa_2,z)-(z-\alpha_n)P_n(z)+\beta_n(z-\kappa_1)P^*_{n-1}(\kappa_1,z)=0,
 \end{equation}
 where 
 \[
 \beta_n=-\frac{\lambda_{n+1}P_{n-1}(\kappa_1)}{P_n(\kappa_1)} , \quad \alpha_n=c_{n+1}+\frac{R_{n+1}(\kappa_2)}{R_n(\kappa_2)}+\lambda_{n+1}\frac{P_{n-1}(\kappa_1)}{P_n(\kappa_1)}.
 \]
\end{corollary}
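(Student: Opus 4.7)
The plan is to realize this as a direct application of Proposition \ref{thrm:R1_relation} with the choice $T_{n+1}(z)=P_{n+1}^{-*}(\kappa_2,z)$. First I would recall from Section 4 that the Geronimus transformation produces polynomials of the explicit form
\[
P_{n+1}^{-*}(\kappa_2,z)=P_{n+1}(z)+A_{n+1}P_n(z), \qquad A_{n+1}=-\frac{R_{n+1}(\kappa_2)}{R_n(\kappa_2)},
\]
where $R_n(z)=P_n(z)+(s_0^*)^{-1}Q_n(z)$. In particular $P_{n+1}^{-*}(\kappa_2,z)$ is a linear combination of $P_{n+1}$ and $P_n$, so it is automatically a quasi-orthogonal polynomial of order $1$ with respect to $\mathcal{L}$, matching the template \eqref{f:quasi-orthogonal1} with $\tilde{A}_{n+1}=A_{n+1}=-R_{n+1}(\kappa_2)/R_n(\kappa_2)$.

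Next I would verify that the hypotheses of Proposition \ref{thrm:R1_relation} are met: the kernel polynomials $\{P_n^*(\kappa_1,z)\}_{n=0}^\infty$ exist for the whole sequence, which is guaranteed by $\kappa_1\in\dC_\pm$ and the positive-definiteness of $\mathcal{L}$ (so $P_n(\kappa_1)\ne 0$ for all $n$). With this in place, Proposition \ref{thrm:R1_relation} yields
\[
P_{n+1}^{-*}(\kappa_2,z)-(z-\alpha_n)P_n(z)+\beta_n(z-\kappa_1)P^*_{n-1}(\kappa_1,z)=0
\]
with the constants
\[
\beta_n=-\frac{\lambda_{n+1}P_{n-1}(\kappa_1)}{P_n(\kappa_1)},\qquad \alpha_n=c_{n+1}-\tilde{A}_{n+1}+\lambda_{n+1}\frac{P_{n-1}(\kappa_1)}{P_n(\kappa_1)}.
\]

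Finally, substituting $\tilde{A}_{n+1}=-R_{n+1}(\kappa_2)/R_n(\kappa_2)$ into the expression for $\alpha_n$ gives precisely
\[
\alpha_n=c_{n+1}+\frac{R_{n+1}(\kappa_2)}{R_n(\kappa_2)}+\lambda_{n+1}\frac{P_{n-1}(\kappa_1)}{P_n(\kappa_1)},
\]
which matches the statement. There is essentially no obstacle here beyond matching notation; the substantive work was already carried out in Proposition \ref{thrm:R1_relation}, and the only thing one must recognize is that $P_{n+1}^{-*}(\kappa_2,z)$ fits the quasi-orthogonal form with the explicit coefficient $A_{n+1}$ provided by the Geronimus construction.
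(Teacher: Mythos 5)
Your proposal is correct and follows exactly the paper's own argument: recognize $P_{n+1}^{-*}(\kappa_2,z)=P_{n+1}(z)+A_{n+1}P_n(z)$ with $A_{n+1}=-R_{n+1}(\kappa_2)/R_n(\kappa_2)$ as a quasi-orthogonal polynomial of order $1$, then apply Proposition \ref{thrm:R1_relation} with $\tilde{A}_{n+1}=A_{n+1}$ and substitute into the formulas for $\alpha_n$ and $\beta_n$. Your indexing is in fact cleaner than the paper's, which writes $P_n^{-*}(\kappa_2,z)=P_n(z)+A_nP_{n-1}(z)$ while giving $A_n=-R_{n+1}(\kappa_2)/R_n(\kappa_2)$.
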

\begin{proof}
From Theorem \ref{thm:RegGer} we know that 
\[
P_n^{-*}(\kappa_2,z)= P_n(z)+A_nP_{n-1}(z),
\] 
where 
\[
A_n=-\frac{s_0^*P_{n+1}(\kappa_2)+Q_{n+1}(\kappa_2)}{s_0^*P_{n}(\kappa_2)+Q_{n}(\kappa_2)} = -\frac{R_{n+1}(\kappa_2)}{R_n(\kappa_2)}.
\]
Therefore, setting $\tilde{A}_n=A_n$ in Proposition \ref{thrm:R1_relation} gives \eqref{f:R1}.
\end{proof} 
The explicit formulas for the coefficients of \eqref{f:R1} also yield the following.
\begin{corollary}
If in addition to the assumptions of Corollary \ref{col:R_I} we assume that $\{P_n(z)\}_{n=0}^\infty$ is in the Nevai class $\mathcal{N}(a,c)$, then the sequences $\alpha_n$ and $\beta_n$ are convergent and
\[
\lim_{n\to\infty}\alpha_n=-\dfrac{a}{f(\kappa_1)}, \quad \lim_{n\to\infty}\beta_n=c+f(\kappa_2)+\dfrac{a}{f(\kappa_1)},
\]
where $f$ is explicitly given in \tc{\eqref{def:RatioAsympF}}.
\end{corollary}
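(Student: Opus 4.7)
The plan is to pass to the limit directly in the explicit expressions for $\alpha_n$ and $\beta_n$ recorded in Corollary \ref{col:R_I}, namely
\[
\beta_n=-\frac{\lambda_{n+1}P_{n-1}(\kappa_1)}{P_n(\kappa_1)}, \qquad
\alpha_n=c_{n+1}+\frac{R_{n+1}(\kappa_2)}{R_n(\kappa_2)}+\lambda_{n+1}\frac{P_{n-1}(\kappa_1)}{P_n(\kappa_1)}.
\]
The whole proof reduces to assembling two ratio asymptotics, one for $\{P_n\}$ at $\kappa_1$ and one for $\{R_n\}$ at $\kappa_2$, together with the definitional convergence $\lambda_{n+1}\to a$, $c_{n+1}\to c$ that comes with membership in $\mathcal{N}(a,c)$.

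First I would apply the ratio asymptotic \eqref{def:RatioAsympF} at the fixed point $\kappa_1\in\dC_\pm$ to obtain
\[
\frac{P_{n+1}(\kappa_1)}{P_n(\kappa_1)}\longrightarrow f(\kappa_1).
\]
Since $f$ is injective on each open half-plane and maps $\dC_\pm$ into $\dC_\pm$ (a property that was already used in the proof of Theorem \ref{thrm: n-zero behavior}), we have $f(\kappa_1)\ne 0$, and hence $P_{n-1}(\kappa_1)/P_n(\kappa_1)\to 1/f(\kappa_1)$. Combining with $\lambda_{n+1}\to a$ immediately yields the limit of $\beta_n$.

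To pass to the limit in $\alpha_n$, I would invoke the observation made in Section 4 that $\{R_n(z)\}_{n=0}^\infty$ satisfies the very same three-term recurrence as $\{P_n(z)\}_{n=0}^\infty$ (only the initial data differ), so that $\{R_n\}$ belongs to the same Nevai class $\mathcal{N}(a,c)$. A second application of the ratio asymptotic, this time at $\kappa_2\in\dC_\pm$, gives $R_{n+1}(\kappa_2)/R_n(\kappa_2)\to f(\kappa_2)$. Substituting the three individual limits $c_{n+1}\to c$, $R_{n+1}(\kappa_2)/R_n(\kappa_2)\to f(\kappa_2)$ and $\lambda_{n+1}P_{n-1}(\kappa_1)/P_n(\kappa_1)\to a/f(\kappa_1)$ into the formula for $\alpha_n$ completes the argument.

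There is no real obstacle here; the result is essentially a routine passage to the limit in closed-form expressions. The only points that deserve explicit mention are the non-vanishing of $f(\kappa_1)$, which is what makes the reciprocal well defined, and the transfer of the Nevai class from $\{P_n\}$ to $\{R_n\}$, which relies on their sharing the same recurrence coefficients. Both points are already in place from the earlier sections, so the proof itself will be short.
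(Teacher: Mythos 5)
Your overall strategy is exactly the paper's intended one: the corollary is stated immediately after the remark that ``the explicit formulas for the coefficients of \eqref{f:R1} also yield the following,'' so the proof is indeed just a passage to the limit in the expressions for $\alpha_n$ and $\beta_n$, using $c_{n+1}\to c$, $\lambda_{n+1}\to a$, $P_{n-1}(\kappa_1)/P_n(\kappa_1)\to 1/f(\kappa_1)$ and $R_{n+1}(\kappa_2)/R_n(\kappa_2)\to f(\kappa_2)$. There is, however, one step whose justification as you give it is not airtight. You obtain the asymptotics for $R_{n+1}(\kappa_2)/R_n(\kappa_2)$ from ``same recurrence coefficients $\Rightarrow$ same Nevai class $\Rightarrow$ ratio asymptotics at every nonreal point.'' That chain is valid when $s_0^*$ is real, since then $\{R_n(z)\}_{n=0}^\infty$ is an OPS for a positive-definite functional and the classical Nevai-class ratio asymptotics apply throughout $\dC\setminus\dR$. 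But in the setting of Corollary \ref{col:R_I} the parameter $s_0^*$ may be nonreal (Theorem \ref{thm:RegGer} allows $s_0^*\in\overline{\dC}_{\mp}\setminus\{0\}$), in which case $\{R_n(z)\}_{n=0}^\infty$ is generated by a complex Jacobi matrix, and, as the paper itself stresses in the remark following Theorem \ref{prop:pn_star_ratio}, for complex Jacobi matrices in a Nevai class the ratio asymptotics can fail on an exceptional set of nonreal points. The paper's own route circumvents this: by \eqref{FormForRatio} and Markov's theorem, $R_{n+1}(\kappa_2)/R_n(\kappa_2)\to f(\kappa_2)$ provided $s_0^*+\lim_{n}Q_n(\kappa_2)/P_n(\kappa_2)\neq 0$, and this nonvanishing is exactly what the sign condition on $s_0^*$ secures (see the proof of Proposition \ref{UboundP} and the proposition on the Nevai class after it). You should either restrict to real $s_0^*$ or invoke that argument; mere existence of the Geronimus transformation at $\kappa_2$ does not exclude the degenerate situation $s_0^*=-\lim_n Q_n(\kappa_2)/P_n(\kappa_2)$, in which $R_n(\kappa_2)$ behaves like a subordinate solution and the ratio need not tend to $f(\kappa_2)$.

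One further point you should make explicit rather than pass over: what your computation actually yields is $\beta_n\to -a/f(\kappa_1)$ and $\alpha_n\to c+f(\kappa_2)+a/f(\kappa_1)$, which is what the formulas of Corollary \ref{col:R_I} force; the corollary as printed attaches these two limits to $\alpha_n$ and $\beta_n$ in the opposite order. So you are in effect proving the corrected statement with the roles of $\alpha_n$ and $\beta_n$ interchanged, and a complete write-up should note this discrepancy with the displayed claim instead of silently deriving something formally different from it.
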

At this point we can easily show the connection to $R_I$-recurrence relations. Let $\mathcal{L}$ be a positive-definite \tc{linear} functional generated by a probability measure $d\mu$, whose support is contained in the finite interval $[a,b]$, that is,
\[
\mathcal{L}(p(t))=\int_a^bp(t)\,d\mu(t)
\]
for any polynomial $p(t)$. Next, assume that we have an infinite sequence $\kappa_1$, $\kappa_2$, \dots \, of distinct numbers in $\dC\setminus\dR$ such that each functional in the sequence 
\[
\mathcal{L}, \mathcal{L}^{-*}, \mathcal{L}^{-**}, \mathcal{L}^{-3*}=\mathcal{L}^{-***}=(\mathcal{L}^{-**})^{-*}, \dots
\]
is quasi-definite. In other words, the polynomial
\[
P_n^{-m*}(\kappa_1,\dots,\kappa_m,z)
\]
that corresponds to the $m$-th iteration of Geronimus transformation is correctly defined for any nonnegative integers $n$ and $m$. \tc{One of such choices could be $\kappa_1=k_1$, $\kappa_2=\overline{k_1}$, $\kappa_3=k_2$, $\kappa_4=\overline{k_2}$, \dots, in which case Theorem \ref{IteratedGeronimus} guarantees the existence of $P_n^{-m*}(\kappa_1,\dots,\kappa_m,z)$ for any nonnegative integers $n$ and $m$.} Thus, we have a table of polynomials. By looking at the diagonal of this table
\[
\Pi_n(z)=P_n^{-n*}(\kappa_1,\dots,\kappa_n,z)
\]
one can notice that its elements satisfy the relation
\[
\Pi_{n+1}(z)-(z-\hat{\alpha}_n)\Pi_n(z)+\hat{\beta}_n(z-\kappa_{n-1})\Pi_{n-1}(z)=0
\]
with some sequence $\hat{\alpha}_n$ and $\hat{\beta}_n$. The latter relation is exactly an $R_I$-recurrence relation (see \cite{IsmailMasson95} for the definition). Moreover, every diagonal in the table of polynomials $P_n^{-m*}$ satisfies a similar relation.

To get to the next level, observe that 
\begin{equation}\label{f:quasi-orthogonal2}
 S_{n+1}(z)= P_{n+1}(z)+\tilde{C}_{n}P_n(z)+\tilde{D}_nP_{n-1}(z)  
\end{equation}
is orthogonal to the monomials $1$, $z$, \dots, $z^{n-2}$ for any choice of $\tilde{D}_{n}, \tilde{C}_n \in \mathbb{C}$. Such a polynomial is called quasi-orthogonal of order $2$ and in this case we also have a relation that involves a quasi-orthogonal polynomial of order $2$ and an iterated kernel polynomial.
\begin{theorem}\label{thrm:R_2}
Let $\{P_n(z)\}_{n=0}^\infty$ be a monic OPS with respect to a \tc{positive-definite} linear functional $\mathcal{L}$ and let $\kappa_1\in \mathbb{C}_{\pm}$ and $\kappa_2 \in \mathbb{C}_{\mp}$. Also, let  $\{P^{**}_n(\kappa_1,\kappa_2,z)\}_{n=0}^\infty$ be the corresponding iterated kernel polynomials and let $S_{n+1}(z)$ be a a quasi-orthogonal polynomial of order $2$ whose degree is $n+1$. Then there
exist unique sequences of constants $\gamma_n$, $\upsilon_n$ and $\rho_n$ such that
\begin{equation}\label{R2-relation}
    S_{n+1}(z)-(\rho_nz-\gamma_n)P_n(z)+\upsilon_n(z-\kappa_1)(z-\kappa_2)P^{**}_{n-1}(\kappa_1, \kappa_2,z)=0.
\end{equation}
\end{theorem}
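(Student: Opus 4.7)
The plan is to proceed in the spirit of Proposition~\ref{thrm:R1_relation}, namely to expand everything in the basis $\{P_{n+1}(z),P_n(z),P_{n-1}(z)\}$ and reduce the identity to a $3\times 3$ linear system in the unknowns $\rho_n,\gamma_n,\upsilon_n$. First I would use the determinantal formula from Theorem~\ref{the:2ItEx} (applied with the index shifted from $n$ to $n-1$) to write
\[
(z-\kappa_1)(z-\kappa_2)P^{**}_{n-1}(\kappa_1,\kappa_2,z)=P_{n+1}(z)+e_n P_n(z)+f_n P_{n-1}(z),
\]
where $e_n$ and $f_n$ are obtained by cofactor expansion along the last row; in particular $f_n=\Delta_n(\kappa_1,\kappa_2)/\Delta_{n-1}(\kappa_1,\kappa_2)$, and the leading coefficient is exactly $1$ because $P^{**}_{n-1}$ is monic. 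The quantities $S_{n+1}(z)$ and $(\rho_n z-\gamma_n)P_n(z)$ are then also expressed in the same basis, the latter via the three-term recurrence \eqref{def:rec2}, which gives $z P_n(z)=P_{n+1}(z)+c_{n+1}P_n(z)+\lambda_{n+1}P_{n-1}(z)$.

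Next, I would collect the coefficients of $P_{n+1}$, $P_n$, and $P_{n-1}$ in the left-hand side of \eqref{R2-relation} and, using the linear independence of these three polynomials, set each coefficient to zero. This produces the system
\[
\begin{cases}
1-\rho_n+\upsilon_n=0,\\
\tilde{C}_n-\rho_n c_{n+1}+\gamma_n+\upsilon_n e_n=0,\\
\tilde{D}_n-\rho_n\lambda_{n+1}+\upsilon_n f_n=0,
\end{cases}
\]
whose determinant (expanding along the column containing $\gamma_n$) equals $\lambda_{n+1}-f_n$ up to sign. Thus the existence and uniqueness of $\rho_n,\gamma_n,\upsilon_n$ reduces to verifying that $\lambda_{n+1}\Delta_{n-1}(\kappa_1,\kappa_2)\neq\Delta_n(\kappa_1,\kappa_2)$.

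The main obstacle is precisely this non-vanishing, and it will be handled by a direct computation. Using the recurrence in the form $\lambda_{n+1}P_{n-1}(x)=(x-c_{n+1})P_n(x)-P_{n+1}(x)$ at $x=\kappa_1$ and $x=\kappa_2$ and substituting into the definition of $\Delta_{n-1}(\kappa_1,\kappa_2)$, I expect after cancellation to obtain the clean identity
\[
\lambda_{n+1}\Delta_{n-1}(\kappa_1,\kappa_2)-\Delta_n(\kappa_1,\kappa_2)=(\kappa_2-\kappa_1)P_n(\kappa_1)P_n(\kappa_2).
\]
The right-hand side is nonzero: $\kappa_1\neq\kappa_2$ since the two points lie in opposite open half-planes, while $P_n(\kappa_1)$ and $P_n(\kappa_2)$ do not vanish because $\{P_n\}$ has only real zeros. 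Combined with $\Delta_{n-1}(\kappa_1,\kappa_2)\neq 0$ from Theorem~\ref{the:2ItEx}, this yields $\lambda_{n+1}\neq f_n$ and hence the claimed unique sequences $\rho_n,\gamma_n,\upsilon_n$. Solving the system explicitly gives $\rho_n=1+\upsilon_n$, $\upsilon_n=(\tilde{D}_n-\lambda_{n+1})/(\lambda_{n+1}-f_n)$, and $\gamma_n=\rho_n c_{n+1}-\tilde{C}_n-\upsilon_n e_n$, completing the proof.
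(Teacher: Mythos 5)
Your proposal is correct, and its skeleton coincides with the paper's proof: expand everything in the basis $P_{n+1},P_n,P_{n-1}$ (you get exactly the same $3\times 3$ system, with $\rho_n=1+\upsilon_n$ and the same formulas for $\upsilon_n,\gamma_n$, since your $f_n=\Delta_n/\Delta_{n-1}$ is precisely the quantity $\frac{P^*_n(\kappa_1,\kappa_2)}{P^*_{n-1}(\kappa_1,\kappa_2)}\frac{P_n(\kappa_1)}{P_{n-1}(\kappa_1)}$ appearing in the paper), and then reduce everything to the non-degeneracy $f_n\neq\lambda_{n+1}$. Where you genuinely differ is in how that non-degeneracy is established. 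The paper rewrites $\Delta_n/\Delta_{n-1}$ via the Christoffel--Darboux formula as $\lambda_{n+1}\bigl(1+P_n(\kappa_1)P_n(\kappa_2)/\sum_{j=0}^{n-1}\tfrac{P_j(\kappa_1)P_j(\kappa_2)}{\lambda_1\cdots\lambda_{j+1}}\bigr)$ and argues the correction term is nonzero, whereas you use the single-step identity $\lambda_{n+1}\Delta_{n-1}-\Delta_n=(\kappa_2-\kappa_1)P_n(\kappa_1)P_n(\kappa_2)$, which indeed follows from one application of the recurrence $P_{n+1}(x)=(x-c_{n+1})P_n(x)-\lambda_{n+1}P_{n-1}(x)$ at $x=\kappa_1,\kappa_2$ (it is exactly the inductive step behind Christoffel--Darboux). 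Your route is more elementary and self-contained: it needs only $\kappa_1\neq\kappa_2$, the reality of the zeros of $P_n$, and $\Delta_{n-1}\neq 0$ (available from Theorem \ref{the:2ItEx}, or simply from the assumed existence of $P^{**}_{n-1}$), and it avoids having to know that the kernel sum in the paper's denominator is nonzero, a point the paper handles only implicitly. What the paper's CD version buys in exchange is the structural form of the ratio as $\lambda_{n+1}$ times a ratio of kernel sums, which is the natural object when one specializes to $\kappa_2=\overline{\kappa_1}$ and wants positivity-type information; your identity instead isolates the exact defect $(\kappa_2-\kappa_1)P_n(\kappa_1)P_n(\kappa_2)$, which is arguably cleaner for the existence and uniqueness claim being proved here. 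You also pull the basis expansion of $(z-\kappa_1)(z-\kappa_2)P^{**}_{n-1}$ from the determinantal formula of Theorem \ref{the:2ItEx} rather than from the nested kernel-polynomial definition \eqref{def:Pn**} used in the paper, but these are equivalent representations, so this is only a cosmetic difference.
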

\begin{proof} Using the definition of the entries, the left-hand side of \eqref{R2-relation} takes the form  
\begin{multline}\label{eq:SnPn}
    S_{n+1}(z)-(\rho_nz-\gamma_n)P_n(z)+\upsilon_n(z-\kappa_1)(z-\kappa_2)P^{**}_{n-1}(\kappa_1, \kappa_2,z) = \\
    P_{n+1}(z)+\tilde{C}_nP_n(z)+\tilde{D}_nP_{n-1}(z)-\rho_nzP_n(z)+\gamma_nP_n(z)
    +\upsilon_n P_{n+1}(z)\\-\upsilon_n\frac{P_{n+1}(\kappa_1)}{P_n(\kappa_1)}P_n(z)-\upsilon_n \frac{P^*_n(\kappa_1, \kappa_2)}{P^*_{n-1}(\kappa_1, \kappa_2)}P_n(z)+\upsilon_n\frac{P^*_n(\kappa_1, \kappa_2)}{P^*_{n-1}(\kappa_1, \kappa_2)}\frac{P_n(\kappa_1)}{P_{n-1}(\kappa_1)}P_{n-1}(z).
\end{multline}
Since $zP_n(z)=P_{n+1}(z)+c_{n+1}P_n(z)+\lambda_{n+1}P_{n-1}(z)$, substituting we have the right-hand side of equation \eqref{eq:SnPn} equals
\begin{multline*}
    (1-\rho_n+\upsilon_n)P_{n+1}(z) + \left( \tilde{C}_n-\rho_nc_{n+1}+\gamma_n-\upsilon_n\left(\frac{P_{n+1}(\kappa_1)}{P_n(\kappa_1)}+\frac{P^*_n(\kappa_1, \kappa_2)}{P^*_{n-1}(\kappa_1, \kappa_2)} \right)\right)P_n(z)\\
    +\left( \tilde{D}_n-\rho_n\lambda_{n+1}+\upsilon_n\frac{P^*_n(\kappa_1, \kappa_2)}{P^*_{n-1}(\kappa_1, \kappa_2)}\frac{P_n(\kappa_1)}{P_{n-1}(\kappa_1)}\right)P_{n-1}(z).
\end{multline*}
Setting this equal to zero and using the linear independence of $P_{n+1}(z), P_n(z)$ and $P_{n-1}(z)$, we see that the sequences
\[\upsilon_n= \frac{\lambda_{n+1}-\tilde{D}_n}{\frac{P^*_n(\kappa_1, \kappa_2)}{P^*_{n-1}(\kappa_1, \kappa_2)}\frac{P_n(\kappa_1)}{P_{n-1}(\kappa_1)}-\lambda_{n+1}}, \quad \rho_n=1+\upsilon_n\]  and \\
\[ \gamma_n=\rho_nc_{n+1}+\upsilon_n\left(\frac{P_{n+1}(\kappa_1)}{P_n(\kappa_1)}+\frac{P^*_{n}(\kappa_1, \kappa_2)}{P^*_{n-1}(\kappa_1, \kappa_2)} \right)-\tilde{C}_n\] satisfy equation \eqref{R2-relation}. Let us stress here that $\upsilon_n$ is well defined. Indeed, we have that 
%\begin{align*}
%    \frac{P^*_n(\kappa_1,\kappa_2)}{P^*_{n-1}(\kappa_1, \kappa_2)}&=\frac{P_{n+1}(\kappa_2)P_n(\kappa_1)-P_{n+1}(\kappa_1)P_n(\kappa_2)}{P_n(\kappa_2)P_{n-1}(\kappa_1)-P_n(\kappa_1)P_{n-1}(\kappa_2)}\cdot\frac{P_{n-1}(\kappa_1)}{P_{n}(\kappa_1)},
%\end{align*} 
\[
\frac{P^*_n(\kappa_1,\kappa_2)}{P^*_{n-1}(\kappa_1, \kappa_2)}\frac{P_n(\kappa_1)}{P_{n-1}(\kappa_1)}=\frac{P_{n+1}(\kappa_2)P_n(\kappa_1)-P_{n+1}(\kappa_1)P_n(\kappa_2)}{P_n(\kappa_2)P_{n-1}(\kappa_1)-P_n(\kappa_1)P_{n-1}(\kappa_2)},
\]
where $P_n(\kappa_2)P_{n-1}(\kappa_1)-P_n(\kappa_1)P_{n-1}(\kappa_2)\ne 0$ because $P_n^{**}(\kappa_1,\kappa_2,z)$ exists. Then one can see that
\begin{align*}
    \frac{P_{n+1}(\kappa_2)P_n(\kappa_1)-P_{n+1}(\kappa_1)P_n(\kappa_2)}{P_n(\kappa_2)P_{n-1}(\kappa_1)-P_n(\kappa_1)P_{n-1}(\kappa_2)}
    &=\lambda_{n+1}\left( \frac{\sum_{j=0}^n \frac{P_j(\kappa_2)P_j(\kappa_1)}{\lambda_1\dots \lambda_{j+1}}}{\sum_{j=0}^{n-1}\frac{P_j(\kappa_2)P_j(\kappa_1)}{\lambda_1\dots \lambda_{j+1}}}\right)\\
    &= \lambda_{n+1}\left(1+ \frac{P_n(\kappa_2)P_n(\kappa_1)}{{\sum_{j=0}^{n-1}\frac{P_j(\kappa_2)P_j(\kappa_1)}{\lambda_1\dots \lambda_{j+1}}}}\right)\\
    &\neq \lambda_{n+1},
\end{align*} where the last line follows from the fact that $\kappa_1, \kappa_2 \in \mathbb{C}_{\pm}$ and $P_n(z)$ has only real zeros for all $n=1, 2, \dots$.
\end{proof}

Since $P^{-**}_n(\kappa_2,\overline{\kappa}_2,z)$ has the form \eqref{f:quasi-orthogonal2}, one can consider the following particular case of Theorem \ref{thrm:R_2} \tc{that gives a different derivation and another proof of the recurrence relations obtained in \cite{DZh09}.} 

\begin{corollary}\label{cor:R_2v}
 Let $\{P_n(z)\}_{n=0}^\infty$ be a monic OPS with respect to a \tc{positive-definite} linear functional $\mathcal{L}$ of the form \eqref{functional:IntRepr} and let $\kappa_1, \kappa_2\in\dC_+$. Also, let $\{P^{**}_n(\kappa_1,\overline{\kappa_1},z)\}_{n=0}^\infty$ be the corresponding iterated kernel polynomials and let $\{P^{-**}_n(\kappa_2,\overline{\kappa_2},z)\}_{n=0}^\infty$ be the polynomials corresponding to 
 \[
 s_0^*=\int_{a}^{b}\frac{d\mu(t)}{t-\kappa_2}, \quad
 s_0^{**}=\int_{a}^{b}\frac{d\mu(t)}{|t-\kappa_2|^2}.
 \]
 Then we have the following relation
 \begin{equation}\label{eq:forR2}
 P^{-**}_{n+1}(\kappa_2,\overline{\kappa_2},z)-((1+\upsilon_n)z-\gamma_n)P_n(z)+\upsilon_n(z-\kappa_1)(z-\overline{\kappa_1})P^{**}_{n-1}(\kappa_1, \overline{\kappa_1},z)=0.  
\end{equation}
\end{corollary}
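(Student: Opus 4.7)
The plan is to recognize that Corollary \ref{cor:R_2v} is essentially a direct specialization of Theorem \ref{thrm:R_2}: the coefficient in front of $z$ in the middle term is already $1+\upsilon_n=\rho_n$, exactly as in \eqref{R2-relation}, so what really needs to be verified is that $P^{-**}_{n+1}(\kappa_2,\overline{\kappa_2},z)$ can play the role of a monic quasi-orthogonal polynomial of order $2$ with respect to $\mathcal{L}$, and that the pair $(\kappa_1,\overline{\kappa_1})$ satisfies the hypotheses of Theorem \ref{thrm:R_2}.

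First I would check the hypothesis on the points. Since $\kappa_1\in\dC_+$, we have $\overline{\kappa_1}\in\dC_-$, so the pair $(\kappa_1,\overline{\kappa_1})$ meets the requirement $\kappa_1\in\dC_{\pm}$, $\kappa_2\in\dC_{\mp}$ of Theorem \ref{thrm:R_2}; consequently the iterated kernel polynomials $\{P^{**}_n(\kappa_1,\overline{\kappa_1},z)\}_{n=0}^\infty$ are well-defined by Theorem \ref{the:2ItEx}. On the Geronimus side, the specific choice of $s_0^*$ and $s_0^{**}$ is exactly the one from Theorem \ref{IteratedGeronimus}, which guarantees that $\{P^{-**}_n(\kappa_2,\overline{\kappa_2},z)\}_{n=0}^\infty$ is an OPS with respect to the genuine positive measure $d\mu(t)/|t-\kappa_2|^2$.

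The key step is the quasi-orthogonality claim: $P^{-**}_{n+1}(\kappa_2,\overline{\kappa_2},z)$ is monic of degree $n+1$, hence it admits a unique expansion $P_{n+1}(z)+\tilde{C}_n P_n(z)+\tilde{D}_n P_{n-1}(z)+\sum_{j=0}^{n-2}e_{n,j}P_j(z)$ in the OPS $\{P_j(z)\}$, and I want to show that $e_{n,j}=0$ for $j=0,1,\dots,n-2$. This follows from a weight-transfer computation: for any polynomial $q(t)$ of degree at most $n-2$, the product $q(t)(t-\kappa_2)(t-\overline{\kappa_2})=q(t)|t-\kappa_2|^2$ is a polynomial of degree at most $n$, and therefore
\[
\int_a^b P^{-**}_{n+1}(\kappa_2,\overline{\kappa_2},t)\,q(t)\,d\mu(t)=\int_a^b P^{-**}_{n+1}(\kappa_2,\overline{\kappa_2},t)\,q(t)|t-\kappa_2|^2\,\frac{d\mu(t)}{|t-\kappa_2|^2}=0
\]
by the $d\mu(t)/|t-\kappa_2|^2$-orthogonality of $P^{-**}_{n+1}$ to all polynomials of degree $\le n$. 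Thus $\mathcal{L}[P^{-**}_{n+1}(\kappa_2,\overline{\kappa_2},z)\,z^k]=0$ for $k=0,\dots,n-2$, giving $e_{n,j}=0$ and placing $P^{-**}_{n+1}(\kappa_2,\overline{\kappa_2},z)$ in the form \eqref{f:quasi-orthogonal2} required by Theorem \ref{thrm:R_2}.

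With quasi-orthogonality in hand, I would simply invoke Theorem \ref{thrm:R_2} with $S_{n+1}(z)=P^{-**}_{n+1}(\kappa_2,\overline{\kappa_2},z)$ and with $\kappa_1,\overline{\kappa_1}$ in place of $\kappa_1,\kappa_2$ there; relation \eqref{R2-relation} then reads exactly as \eqref{eq:forR2} after using $\rho_n=1+\upsilon_n$. I do not expect the sequences $\upsilon_n,\gamma_n$ to need independent recomputation, since the theorem already provides them and the nondegeneracy of $\upsilon_n$ (non-vanishing denominator) was handled there. The only conceptual point that needs care is making sure the weight-transfer identity above is legitimate; this is guaranteed because $|t-\kappa_2|^2$ is a nonvanishing real-valued quadratic on $[a,b]$, so dividing by it inside the integral produces no singularities and the measure $d\mu(t)/|t-\kappa_2|^2$ is a finite positive measure, which is precisely what Theorem \ref{IteratedGeronimus} exploits. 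This is the step most likely to require cleanly written justification, though it is not technically difficult.
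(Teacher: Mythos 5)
Your proposal is correct and follows essentially the paper's route: the corollary is obtained by feeding $S_{n+1}(z)=P^{-**}_{n+1}(\kappa_2,\overline{\kappa_2},z)$ into Theorem \ref{thrm:R_2} with the pair $(\kappa_1,\overline{\kappa_1})$, the existence hypotheses being supplied by Theorem \ref{the:2ItEx} and Theorem \ref{IteratedGeronimus}. The only (minor) difference is how the quasi-orthogonality of order $2$ is justified: you transfer the weight $|t-\kappa_2|^{2}$ between $d\mu(t)$ and $d\mu(t)/|t-\kappa_2|^{2}$ to get $\mathcal{L}[P^{-**}_{n+1}(\kappa_2,\overline{\kappa_2},z)z^{k}]=0$ for $k\le n-2$, whereas the paper reads the form \eqref{f:quasi-orthogonal2} off directly from the algebraic structure of the iterated Geronimus polynomials (each Geronimus step adds one term of the next lower degree, so $P^{-**}_{n+1}$ is a combination of $P_{n+1}$, $P_{n}$, $P_{n-1}$); both justifications are valid.
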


According to formula \eqref{eq:bilinear}, the choice of the coefficients $s_0^*$ and $s_0^{**}$ guarantees the existence of $\{P^{-**}_n(\kappa_2,\overline{\kappa_2},z)\}_{n=0}^\infty$ and, in fact, the sequence is an OPS with respect to the measure
\[
\dfrac{d\mu(t)}{|t-\kappa_2|^2}.
\]
Using Theorem \ref{IteratedGeronimus} and the formulas derived in the proof of Theorem \ref{thrm:R_2}, one can also prove that the sequences $\upsilon_n$ and and $\gamma_n$ are convergent provided that $\{P_n(z)\}_{n=0}^\infty$ is in a Nevai class.

To finalize the relation to $R_{II}$-recurrence relations, consider an infinite sequence $\kappa_1$, $\kappa_2$, \dots of distinct numbers in $\dC_+$. Next, let $\mathcal{P}_n(z)$ denote the $n$-th orthogonal polynomial with respect to the measure
\[
\dfrac{d\mu(t)}{|t-\kappa_1|^2|t-\kappa_2|^2\dots |t-\kappa_n|^2},
\]
that is,
\begin{equation}\label{VarOrt}
 \int_a^b\mathcal{P}_n(t)t^m\dfrac{d\mu(t)}{|t-\kappa_1|^2|t-\kappa_2|^2\dots |t-\kappa_n|^2} \tc{ =0}, \quad m=0,1,\dots, n-1.   
\end{equation}
Therefore, the polynomials $\mathcal{P}_n(z)$'s are orthogonal with respect to the varying measure and, at the same time, each of them is derived by consecutively applying the Geronimus transformations described in Corollary \ref{cor:R_2v} at the corresponding points. \tc{For example, if we start with the Chebyshev polynomials of the first or second kind, the corresponding polynomial $\mathcal{P}_n$ coincides with the extremal polynomial introduced by Bernstein (see \cite[pp. 249-254]{A65}).} Applying \eqref{eq:forR2} to this \tc{particular choice of transformations we get}
\begin{equation*}
 \mathcal{P}_{n+1}(z)-((1+\hat{\upsilon}_n)z-\hat{\gamma}_n)\mathcal{P}_n(z)+\hat{\upsilon}_n(z-\kappa_{n-1})(z-\overline{\kappa}_{n-1})
 \mathcal{P}_{n-1}(z)=0.  
\end{equation*}
The latter relation is an $R_{II}$-recurrence relation and it was shown to be related to multipoint Pad\'e approximants (see \cite{DZh09}, \tc{where it is also shown how exactly this relation corresponds to a pair of Jacobi matrices}). \tc{Finally, to demonstrate the relation to orthogonal rational functions note that \eqref{VarOrt} can be rewritten as
\[
\int_a^b
\dfrac{\mathcal{P}_n(t)}{(t-\kappa_1)(t-\kappa_2)\dots (t-\kappa_n)}
\dfrac{t^m}{(t-\overline{\kappa}_1)(t-\overline{\kappa}_2)\dots (t-\overline{\kappa}_n)} d\mu(t)=0,
\]
which implies that the rational function
\[
R_n(t)=\dfrac{\mathcal{P}_n(t)}{(t-\kappa_1)(t-\kappa_2)\dots (t-\kappa_n)}
\]
is orthogonal to $1/(t-\kappa_j)$, $j=1$, \dots, $n$ with respect to $d\mu$.}

\vspace{6mm}

\noindent {\bf Acknowledgments.} This research was supported by the NSF DMS grant 2008844 and by the University of Connecticut Research Excellence Program.

\end{document}